\numberwithin{equation}{section}
\numberwithin{figure}{section}
\theoremstyle{plain}
\newtheorem{thm}{\protect\theoremname}[section]
\theoremstyle{remark}
\newtheorem{rem}[thm]{\protect\remarkname}
\theoremstyle{plain}
\newtheorem{lem}[thm]{\protect\lemmaname}
\theoremstyle{definition}
\newtheorem{defn}[thm]{\protect\definitionname}
\theoremstyle{plain}
\newtheorem{prop}[thm]{\protect\propositionname}
\newenvironment{proof}[1][\protect\proofname]{\par
\normalfont\topsep6\p@\@plus6\p@\relax
\trivlist
\itemindent\parindent
\item[\hskip\labelsep\scshape #1]\ignorespaces
}{%
\endtrivlist\@endpefalse
}
\providecommand{\proofname}{Proof}
\theoremstyle{plain}
\newtheorem{cor}[thm]{\protect\corollaryname}
\numberwithin{equation}{section}
\date{}
\providecommand{\corollaryname}{Corollary}
\providecommand{\definitionname}{Definition}
\providecommand{\lemmaname}{Lemma}
\providecommand{\propositionname}{Proposition}
\providecommand{\remarkname}{Remark}
\providecommand{\theoremname}{Theorem}
\begin{document}

\title{The Calderón problem for variable coefficients nonlocal elliptic
operators}

\author{Tuhin Ghosh\thanks{{\footnotesize{}{}{}Institute for Advanced Study, Jockey Club, HKUST,
Hong Kong, China }}, Yi-Hsuan Lin\thanks{{\footnotesize{}{}{}Department of Mathematics, University of Washington,
Seattle, USA}}, Jingni Xiao\thanks{{\footnotesize{}{}{}Department of Mathematics, Hong Kong Baptist
University, Hong Kong, China}}}
\maketitle
\begin{abstract}
In this paper, we introduce an inverse problem of a Schrödinger type
variable nonlocal elliptic operator $(-\nabla\cdot(A(x)\nabla))^{s}+q)$,
for $0<s<1$. We determine the unknown bounded potential $q$ from
the exterior partial measurements associated with the nonlocal Dirichlet-to-Neumann
map for any dimension $n\geq2$. Our results generalize the recent
initiative \cite{ghosh2016calder} of introducing and solving inverse
problem for fractional Schrödinger operator $((-\Delta)^{s}+q)$ for
$0<s<1$. We also prove some regularity results of the direct problem
corresponding to the variable coefficients fractional differential
operator and the associated degenerate elliptic operator. 
\end{abstract}
\textbf{Key words.} The Calderón problem, nonlocal Schrödinger equation,
anisotropic, unique continuation principle, Runge approximation property,
degenerate elliptic equations, $A_{p}$ weight, Almgren's frequency
function, doubling inequality\\
\textbf{Mathematics Subject Classification}: 35R30, 26A33, 35J10,
35J70

\section{Introduction}

Let $\mathcal{L}$ be an elliptic partial differential operator. We
consider in this paper an inverse problem associated to the nonlocal
fractional operator $\mathcal{L}^{s}$ with the power $s\in(0,1)$.
We introduce the corresponding Calderón problem of determining the
unknown bounded potentials $q(x)$ from the exterior measurements
on the Dirichlet-to-Neumann (DN) map of the nonlocal Schrödinger equation
$(\mathcal{L}^{s}+q)u=0$. It intends to generalize the recent study
on the Calderón problem for the fractional Schrödinger equation \cite{ghosh2016calder}.
The study of the nonlocal operators is currently an active research
area in mathematics and often covers vivid problems coming from different
fields including mathematical physics, finance, biology, geology.
See the references \cite{bucur2016nonlocal,ros2015nonlocal} for subsequent
discussions. The study of inverse problems remains as a popular field
in applied mathematics since A.P. Calderón published his pioneering
work ``On an inverse boundary value problem'' \cite{calderon2006inverse}
in 1980s. The problem proposed by Calderón is: ``Is it possible to
determine the electrical conductivity of a medium by making voltage
and current measurements on its boundary?'' It gets its momentum
with the seminal work of Sylvester and Uhlmann \cite{sylvester1987global},
solving the Calderón problem in space dimension. Following that, many
Calderón's type inverse problems and related questions have been addressed
and extensively studied by many authors, mostly related to the local
operators (for example: $\mathcal{L}$). In a very recent progress
the study of Calderón's type inverse problem is being initiated for
nonlocal operators, in particular the Calderón problem of the fractional
Schrödinger operator $(-\Delta)^{s}+q(x)$ has been addressed in \cite{ghosh2016calder}.

In this article, we continue the progress by considering more general
nonlocal operators $(-\nabla\cdot(A(x)\nabla))^{s}+q(x)$ where $A$
is possibly variable coefficient \textit{anisotropic} matrix with
standard ellipticity and boundedness assumptions on it. This work
also offers a comparative study between nonlocal inverse problem of
$(-\nabla\cdot(A(x)\nabla))^{s}+q(x)$ for $0<s<1$, and the local
inverse problem of $-\nabla\cdot(A(x)\nabla)+q(x)$. The solvability
of the local inverse problem is fully known in two dimension, whereas
in three and higher dimension it is partially solved for certain class
of anisotropic matrix. We will see such difficulties do not arise
in our non-local analogue.

In this paper, we consider $\mathcal{L}$ to be a second order linear
elliptic operator of the divergence form 
\begin{equation}
\mathcal{L}:=-\nabla\cdot(A(x)\nabla),\label{eq: Local elliptic operator}
\end{equation}
which is defined in the entire space $\mathbb{R}^{n}$ for $n\geq2$,
where $A(x)=(a_{ij}(x))$, $x\in\mathbb{R}^{n}$ is an $n\times n$
symmetric matrix satisfying the ellipticity condition, i.e., 
\begin{equation}
\begin{cases}
a_{ij}=a_{ji}\mbox{ for all }1\leq i,j\leq n,\mbox{ and }\\
\Lambda^{-1}|\xi|^{2}\leq\sum_{i,j=1}^{n}a_{ij}(x)\xi_{i}\xi_{j}\leq\Lambda|\xi|^{2}\mbox{ for all }x\in\mathbb{R}^{n},\mbox{ for some }\Lambda>0.
\end{cases}\label{eq:ellipticity and symmetry condition}
\end{equation}
Our definition of the fractional power $\mathcal{L}^{s}$, with its
domain $\mathrm{Dom}(\mathcal{L}^{s})$, initiated from the spectral
theorem. We then extend the operator $\mathcal{L}^{s}$, by applying
the heat kernel and its estimates, as a bounded linear operator 
\[
\mathcal{L}^{s}:H^{s}(\mathbb{R}^{n})\longrightarrow H^{-s}(\mathbb{R}^{n}).
\]
The detailed definition of $\mathcal{L}^{s}$ is included in Section~\ref{Section 2}.

If $\Omega$ is a bounded open set in $\mathbb{R}^{n}$, let us consider
$u\in H^{s}(\mathbb{R}^{n})$ a solution to the Dirichlet problem
\begin{equation}
(\mathcal{L}^{s}+q)u=0\quad\mbox{ in }\Omega,\quad u|_{\Omega_{e}}=g,\label{eq:nonlocalproblem}
\end{equation}
where $q=q(x)\in L^{\infty}(\Omega)$ and $\Omega_{e}$ is the exterior
domain denoted by 
\[
\Omega_{e}=\mathbb{R}^{n}\backslash\overline{\Omega},
\]
and it is assumed that int$(\Omega_{e})\neq\emptyset$. We also assume
that $0$ is not an eigenvalue of the operator $(\mathcal{L}^{s}+q)$,
which means 
\begin{equation}
\begin{cases}
\mbox{ if }w\in H^{s}(\mathbb{R}^{n})\mbox{ solves }(\mathcal{L}^{s}+q)w=0\mbox{ in }\Omega\mbox{ and }w|_{\Omega_{e}}=0,\\
\mbox{ then }w\equiv0.
\end{cases}\label{eq:eigenvalue condition}
\end{equation}
For being $q\geq0$, the condition \eqref{eq:eigenvalue condition}
is satisfied. Then for any given $g\in H^{s}(\Omega_{e})$, there
exists a unique solution $u\in H^{s}(\mathbb{R}^{n})$ solves the
nonlocal problem \eqref{eq:nonlocalproblem} (see Proposition \ref{props:wellposedness}).
Next, we are going to define the associated DN map of the problem
\eqref{eq:nonlocalproblem} in an analogues way introduced in \cite[Lemma 2.4]{ghosh2016calder}
as 
\begin{equation}
\Lambda_{q}:X\to X^{*},\label{eq:Dirichlet-to-Neumann}
\end{equation}
where $X$ is the abstract trace space $X=H^{s}(\mathbb{R}^{n})/\widetilde{H}^{s}(\Omega)$
defined by 
\begin{equation}
(\Lambda_{q}[g],[h])=\mathcal{B}_{q}(u,h),\mbox{ for }g,h\in H^{s}(\mathbb{R}^{n}),\label{eq:Associated DN map}
\end{equation}
the $[\cdot]$ stands for the equivalence class in $X$, i.e., for
given $g\in H^{s}(\mathbb{R}^{n})$, 
\[
[g]=g+\widetilde{g},\mbox{ with }\widetilde{g}\in\widetilde{H}^{s}(\Omega),
\]
and $\mathcal{B}_{q}(\cdot,\cdot)$ in \eqref{eq:Associated DN map}
is the standard bilinear form associated to the above problem \eqref{eq:nonlocalproblem}
explicitly introduced in Section 2.2.

The range of the DN map could be interpreted as infinitesimal amount
of particles migrating to the exterior domain $\Omega_{e}$ in the
steady state free diffusion process in $\Omega$ modeled by \eqref{eq:nonlocalproblem}
which gets excited due to some source term in $\Omega_{e}$. Analogue
to the diffusion process, similar interpretations might be regarded
in the theory of stochastic analysis. For more details, see \cite{andreu2010nonlocal,chen2006traces,piiroinen2015probabilistic}.

Furthermore, if the domain $\Omega$, the potential $q$ in $\Omega$,
the source term in $\Omega_{e}$ and the matrix $A(x)$ in \eqref{eq: Local elliptic operator}
satisfying \eqref{eq:ellipticity and symmetry condition} in $\mathbb{R}^{n}$
are sufficiently smooth, the DN map is more explicit and is given
by (see Remark \ref{smooth DN map Lemma}) 
\[
\Lambda_{q}:H^{s+\beta}(\Omega_{e})\to H^{-s+\beta}(\Omega_{e})\mbox{ with }\Lambda_{q}g=\mathcal{L}^{s}u|_{\Omega_{e}},
\]
for any $\beta\geq0$ satisfying $\beta\in(s-\frac{1}{2},\frac{1}{2})$.
Heuristically, given an open set $W\subseteq\Omega_{e}$, we interpret
$\Lambda_{q}g|_{W}$ as measuring the cost required to maintain the
exterior value $g$ in $W$ for the fixed inhomogeneity in the system
given by $A(x)$ in the whole space $\mathbb{R}^{n}$.

The following theorem is the main result in this article. It is a
generalization of the fractional Schrödinger inverse problem studied
in \cite{ghosh2016calder} in any dimension $n\geq2$. This is also
a local data result with exterior Dirichlet and Neumann measurements
in arbitrary open (possibly disjoint) sets $\mathcal{O}_{1},\mathcal{O}_{2}\subseteq\Omega_{e}$.

\subsubsection*{($\mathcal{H}$) Hypothesis on $A(x)$}
\begin{enumerate}
\item $A(x)$ is a bounded matrix-valued function in $\mathbb{R}^{n}$ satisfying
\eqref{eq:ellipticity and symmetry condition}. 
\item Let $A(x)\in C^{\infty}(\mathbb{R}^{n})$.\end{enumerate}
\begin{thm}
\label{thm: Main}For $n\geq2$, let $\Omega\subseteq\mathbb{R}^{n}$
be a bounded domain with Lipschitz boundary and let $q_{1},q_{2}\in L^{\infty}(\Omega)$
satisfy condition \eqref{eq:eigenvalue condition}. Assume that $\mathcal{O}_{1},\mathcal{O}_{2}\subseteq\Omega_{e}$
are arbitrary open sets and $\Lambda_{q_{j}}$ is the DN map with
respect to $(\mathcal{L}^{s}+q_{j})u=0$ in $\Omega$ for $j=1,2$.
If 
\begin{equation}
\left.\Lambda_{q_{1}}g\right|_{\mathcal{O}_{2}}=\left.\Lambda_{q_{2}}g\right|_{\mathcal{O}_{2}}\mbox{ for any }g\in C_{c}^{\infty}(\mathcal{O}_{1}),\label{eq:Equal exteiror measurements}
\end{equation}
and $A(x)$ satisfies the hypothesis ($\mathcal{H}$), then one can
conclude that 
\[
q_{1}=q_{2}\mbox{ in }\Omega.
\]

\end{thm}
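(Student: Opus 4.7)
The plan is to adapt the three-step scheme of Ghosh--Salo--Uhlmann \cite{ghosh2016calder} from $(-\Delta)^s$ to the variable coefficient operator $\mathcal{L}^s$. The building blocks are (i) an Alessandrini-type integral identity, (ii) a Runge approximation theorem for solutions of $(\mathcal{L}^s+q)u=0$ with exterior data supported in a prescribed open set, and (iii) a strong unique continuation principle (UCP) for $\mathcal{L}^s$. Step (iii) is the main technical obstacle and is where the smoothness assumption on $A(x)$ in $(\mathcal{H})$ is essential. The first step is routine: with $u_j \in H^s(\mathbb{R}^n)$ solving $(\mathcal{L}^s+q_j)u_j=0$ in $\Omega$ and $u_j|_{\Omega_e}=g_j$ for $j=1,2$, the symmetry of $A$ and of the bilinear form $\mathcal{B}_q$ yields
\[
(( \Lambda_{q_1}-\Lambda_{q_2})[g_1],[g_2]) = \int_\Omega (q_1-q_2)\,u_1 u_2\,dx.
\]
Choosing $g_1\in C_c^\infty(\mathcal{O}_1)$ and $g_2\in C_c^\infty(\mathcal{O}_2)$, the hypothesis \eqref{eq:Equal exteiror measurements} kills the left-hand side, hence $\int_\Omega (q_1-q_2)u_1u_2\,dx=0$ for all admissible pairs.

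The second step would reduce the theorem to UCP. Define
\[
R(\mathcal{O}):=\{u|_\Omega : u\in H^s(\mathbb{R}^n),\ (\mathcal{L}^s+q)u=0 \text{ in }\Omega,\ \mathrm{supp}(u|_{\Omega_e})\subset\mathcal{O}\}
\]
for any open $\mathcal{O}\subset\Omega_e$. I would prove $R(\mathcal{O})$ is dense in $L^2(\Omega)$ by Hahn--Banach: assume $f\in L^2(\Omega)$ pairs to zero against $R(\mathcal{O})$, introduce the adjoint solution $\varphi\in H^s(\mathbb{R}^n)$ with $(\mathcal{L}^s+q)\varphi=f$ in $\Omega$ and $\varphi|_{\Omega_e}=0$, and use Green-type manipulations on $\mathcal{B}_q$ to deduce $\mathcal{L}^s\varphi=0$ on $\mathcal{O}$. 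Since $\varphi=0$ on $\mathcal{O}$ by construction, a UCP for $\mathcal{L}^s$ of the form ``$v$ and $\mathcal{L}^s v$ vanish on a common nonempty open set implies $v\equiv 0$'' finishes the argument.

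The UCP is the crux and the hard part. For the constant-coefficient $(-\Delta)^s$ the Caffarelli--Silvestre extension reduces UCP to a classical harmonic argument. For $\mathcal{L}=-\nabla\cdot(A\nabla)$, I would mimic this, but the extension now gives a degenerate elliptic problem
\[
\nabla_{x,y}\cdot\bigl(y^{1-2s}\widetilde{A}(x,y)\nabla_{x,y}U\bigr)=0 \text{ in }\mathbb{R}^{n+1}_+,\qquad U(\cdot,0)=v,
\]
where $\widetilde{A}$ is the natural $(n+1)\times(n+1)$ lift of $A$ and $y^{1-2s}$ is a Muckenhoupt $A_2$ weight, placing the problem in the Fabes--Kenig--Serapioni framework. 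The key technical move I would make is to construct an Almgren-type frequency function for this degenerate equation, prove its quasi-monotonicity (the $C^\infty$ regularity of $A$ from hypothesis $(\mathcal{H})$ is what makes the commutator terms controllable), and extract a doubling inequality at any boundary point where $U$ vanishes to infinite order. The doubling inequality then propagates the zero set and forces $U\equiv 0$, hence $v\equiv 0$.

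To close, I would apply the Runge approximation of the second step to both operators $\mathcal{L}^s+q_j$. Given any $\phi\in C_c^\infty(\Omega)$, pick sequences $u_1^{(k)}\in R(\mathcal{O}_1)$ for $q_1$ and $u_2^{(k)}\in R(\mathcal{O}_2)$ for $q_2$ with $u_1^{(k)}\to\phi$ and $u_2^{(k)}\to 1$ in $L^2(\Omega)$. Inserting these into the integral identity and passing to the limit via Cauchy--Schwarz gives $\int_\Omega (q_1-q_2)\phi\,dx=0$ for arbitrary $\phi\in C_c^\infty(\Omega)$, so $q_1=q_2$ in $\Omega$. In contrast with the local anisotropic Calder\'on problem, no further geometric assumption on $A$ is required: UCP for $\mathcal{L}^s$ yields a product solution class large enough to distinguish any two bounded potentials, which is the decisive structural advantage of the nonlocal setting.
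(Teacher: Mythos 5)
Your proposal follows exactly the paper's own three-step architecture: the Alessandrini-type integral identity (Lemma \ref{lem:(Integral-identity)-Let}), the Runge approximation via Hahn--Banach with an adjoint solution reducing to the strong uniqueness property (Lemma \ref{lem:Approximation Lemma} and Theorem \ref{thm:(Approximation-theorem)}), and the UCP obtained through the Stinga--Torrea extension, the Fabes--Kenig--Serapioni degenerate framework, and an Almgren frequency function with a doubling inequality (Section \ref{Section 5} and the Appendix). The argument is correct and essentially identical to the paper's, with the only compressed point being the iteration establishing infinite-order vanishing of the extension at the boundary, which the paper carries out in Theorem \ref{Theorem vanishing to infinite order}.
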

Theorem \ref{thm: Main} can be interpreted as a partial data result
for the above nonlocal inverse problem. Analogues resembles can be
made with the study of the partial data Calderón's type problem, the
richness of such works can be found in \cite{imanuvilov2010calderon,isakov1990inverse,kenig2014calderon,kenig2007calderon}.

Let us present a comparative study between our non-local inverse problem
and the known local inverse problem. We begin with recalling the following
local inverse problem as: Determining the uniqueness of the potentials
$q_{1}=q_{2}$ in $\Omega$ from the information on the associated
DN maps $\Lambda_{A,q_{1}}=\Lambda_{A,q_{2}}$ on $\partial\Omega$,
where the $\Lambda_{A,q_{j}}:H^{1/2}(\partial\Omega)\to H^{-1/2}(\partial\Omega)$
is the DN map defined by $\Lambda_{q}(u|_{\partial\Omega})=(A\nabla u)\cdot\nu|_{\partial\Omega}$
(where $\nu$ is the unit outer normal on $\partial\Omega$), and
$u_{j}$ solves 
\[
(\mathcal{L}+q_{j})u_{j}=-\nabla\cdot(A(x)\nabla u_{j})+q_{j}(x)u_{j}=0\mbox{ in }\Omega\mbox{ for }j=1,2,
\]
with $A\in L^{\infty}(\Omega)$ satisfying the ellipticity condition
(\ref{eq:ellipticity and symmetry condition}).

It has been answered positively in two dimensional case by using the
isothermal coordinate. For $n\geq3$, the answer is known for a certain
class of anisotropic matrices $A$. This problem has been often addressed
via geometry settings which goes as follows: Let $(M,g)$ be a oriented
compact Riemannian $n$-dimensional manifold with $C^{\infty}$-smooth
boundary $\partial M$ and let $q$ be a continuous potential on $M$.
Consider 
\begin{equation}
(-\Delta_{g}+q)u=0\mbox{ in }M,\label{eq:Laplace-Beltrami Calderon}
\end{equation}
where 
\[
\Delta_{g}=\sum_{j,k=1}^{n}g^{-1/2}\dfrac{\partial}{\partial x^{j}}\left(g^{1/2}g^{jk}\dfrac{\partial}{\partial x^{k}}\right)
\]
is the Laplace-Beltrami operator on $(M,g)$ and $g=\det(g_{jk})$
with $(g_{jk})=(g^{jk})^{-1}$. If $\{0\}$ is not an eigenvalue of
$-\Delta_{g}+q$, we have the corresponding DN map on $\partial M$
defined by 
\[
\Lambda_{g,q}:H^{1/2}(\partial M)\to H^{-1/2}(\partial M)\mbox{ by }\Lambda_{g,q}(u|_{\partial M}):=\left.\sum_{j,k=1}^{n}g^{jk}\dfrac{\partial u}{\partial x_{j}}\nu_{k}\right|_{\partial M},
\]
where $\nu=(\nu_{1},\nu_{2},\cdots,\nu_{n})$ is the unit outer normal
on $\partial M$. The connection between the matrix $A=(a_{jk})$
and the metric $g^{jk}$ can be made as 
\[
g^{jk}(x)=(\det A(x))^{-1/(n-2)}a_{jk}(x)\mbox{ for }n\geq3.
\]
In the two-dimensional setting, if $\Lambda_{g,q_{1}}=\Lambda_{g,q_{2}}$
on $\partial M$, then $q_{1}=q_{2}$ in $M$ whenever $q_{1}$, $q_{2}$
are continuous potential on $M$, see \cite{guillarmou2011calderon}.
However in the case of three and higher dimensions, it has been answered
partially. Under special geometries, for instance, when $(M,g)$ is
admissible (see \cite[Definition 1.5]{ferreira2009limiting} and $q_{1}$,
$q_{2}$ are $C^{\infty}$-smooth, then $\Lambda_{g,q_{1}}=\Lambda_{g,q_{2}}$
on $\partial M$ implies $q_{1}=q_{2}$ in $M$, see \cite[Theorem 1.6]{ferreira2009limiting}.

In our paper, we study the inverse problem associated with the nonlocal
operator $\mathcal{L}^{s}+q$, where $\mathcal{L}=-\nabla\cdot(A(x)\nabla)$
and $s\in(0,1)$. We can determine $q_{1}=q_{2}$ in $\Omega\subseteq\mathbb{R}^{n}$
for any $n\geq2$ via the partial information $\left.\Lambda_{q_{1}}g\right|_{\mathcal{O}_{2}}=\left.\Lambda_{q_{2}}g\right|_{\mathcal{O}_{2}}$
for any $g\in C_{c}^{\infty}(\mathcal{O}_{1})$, with $\mathcal{O}_{1}$,
$\mathcal{O}_{2}$ being arbitrary open subsets in $\Omega_{e}$,
for any matrix-valued function $A(x)$ satisfying the hypothesis ($\mathcal{H}$).
Note that we do not assume any further special structures on $A(x)$
unlike to the case $s=1$, for example, the method (see \cite{ferreira2009limiting})
consists of considering the limiting Carleman weight function for
the Laplace-Beltrami operator in $M$ and constructing the corresponding
complex geometrical optics (CGO) solutions based on those weights
of the problem \eqref{eq:Laplace-Beltrami Calderon}. Whereas, our
analysis relies on the Runge type approximation result (cf. Theorem
\ref{thm:(Approximation-theorem)}) based on the strong uniqueness
property (cf. Theorem \ref{thm:(Runge-approximation-property)}) of
the nonlocal operator $\mathcal{L}^{s}$.

For $A(x)$ being an $n\times n$ identity matrix $I_{n}$, then $\mathcal{L}$
becomes the Laplacian operator $(-\Delta)$ and the associated inverse
problem for $s=1$ has been studied extensively. When $n\geq3$, the
global uniqueness result is due to \cite{sylvester1987global} for
$q\in L^{\infty}$ and the authors \cite{chanillo1990problem,nachman1992inverse}
proved it for the case of $q\in L^{p}$. When $n=2$, Bukhgeim \cite{bukhgeim2008recovering}
proved it for slightly more regular potentials and see \cite{blaasten2015stability}
for the case of $q\in L^{p}$. We refer readers to \cite{uhlmann2014seenunseen}
for detailed survey on this inverse problem. For $s\in(0,1)$, the
study of this problem has been recently initiated in \cite{ghosh2016calder}.

Let us briefly mention the way we prove the uniqueness result $q_{1}=q_{2}$
in $\Omega$ as stated in Theorem \ref{thm: Main}. By having the
following integral identity 
\[
\int_{\Omega}(q_{1}-q_{2})u_{1}u_{2}\,dx=0,
\]
which we obtain from the assumption on the DN maps \eqref{eq:Equal exteiror measurements},
in particular, by taking $u_{j}\in H^{s}(\mathbb{R}^{n})$ solving
$(\mathcal{L}^{s}+q_{j})u_{j}=0$ in $\Omega$ with $\mathrm{supp}(u_{j})\subset\overline{\Omega}\cup\overline{\mathcal{O}{}_{j}}$;
finally we derive for any $g\in L^{2}(\Omega)$ 
\[
\int_{\Omega}(q_{1}-q_{2})g\,dx=0.
\]
The proof of the above integral identity will be completed with subsequent
requirements of the following strong uniqueness property and the Runge
approximation property for the nonlocal operator $\mathcal{L}^{s}$,
similar to the results known (see \cite{ghosh2016calder}) for the
fractional Laplacian operator. 
\begin{thm}
\label{thm:(Approximation-theorem)}(Strong uniqueness property) Let
$u\in H^{s}(\mathbb{R}^{n})$ be the function with $u=\mathcal{L}^{s}u=0$
in some open set $\mathcal{O}$ of $\mathbb{R}^{n}$, where $s\in(0,1)$
and $A(x)$ satisfies the hypothesis ($\mathcal{H}$), then $u\equiv0$
in $\mathbb{R}^{n}$. 
\end{thm}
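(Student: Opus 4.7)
The plan is to prove the strong unique continuation by reducing it to a unique continuation statement for a degenerate elliptic extension problem, in the spirit of the Caffarelli--Silvestre reduction used in \cite{ghosh2016calder} but with the variable coefficient Stinga--Torrea extension replacing the fractional Laplacian one.

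First I would extend $u$ to the upper half space $\mathbb{R}^{n+1}_{+}=\{(x,y):x\in\mathbb{R}^{n},\,y>0\}$ as follows. Using the spectral definition of $\mathcal{L}^{s}$ from Section~\ref{Section 2}, define $U(x,y)$ to be the solution of the degenerate elliptic problem
\[
\nabla_{x,y}\cdot\bigl(y^{1-2s}\widetilde{A}(x,y)\nabla_{x,y}U\bigr)=0\quad\text{in }\mathbb{R}^{n+1}_{+},\qquad U(x,0)=u(x),
\]
where $\widetilde{A}(x,y)=\mathrm{diag}(A(x),1)$ (or a smooth extension thereof). The key input is the Stinga--Torrea identity
\[
\lim_{y\to0^{+}}y^{1-2s}\partial_{y}U(x,y)=-c_{s}\,\mathcal{L}^{s}u(x),
\]
valid as a distributional identity on $\mathbb{R}^{n}$, where $c_{s}$ is an explicit nonzero constant. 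The hypothesis ($\mathcal{H}$) on $A$ ensures enough regularity of $\widetilde{A}$ for this construction, and the weight $y^{1-2s}$ is an $A_{2}$-Muckenhoupt weight, so the appropriate weighted Sobolev framework is available.

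Next, I would translate the vanishing assumption: on $\mathcal{O}$ we have $u=0$, hence $U=0$ on $\mathcal{O}\times\{0\}$, and $\mathcal{L}^{s}u=0$, hence $\lim_{y\to0^{+}}y^{1-2s}\partial_{y}U=0$ on $\mathcal{O}$. Thus $U$ has vanishing (weighted) Cauchy data on $\mathcal{O}\times\{0\}$. A standard reflection argument for the $A_{2}$-weighted degenerate elliptic equation (extend $U$ evenly across $y=0$ over $\mathcal{O}$; the two Cauchy conditions guarantee that the extension is still a weak solution of the same degenerate elliptic equation in a full two-sided neighborhood of any point of $\mathcal{O}\times\{0\}$) produces a solution $\widetilde{U}$ in an open connected set $\mathcal{U}\subset\mathbb{R}^{n+1}$ meeting both half spaces, with $\widetilde{U}\equiv0$ on the open subset $\mathcal{O}\times\{0\}^{-}$ (or, more robustly, on a nontrivial open ball inside the reflected region).

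The core step is then to invoke a strong unique continuation principle for the degenerate operator $\nabla\cdot(|y|^{1-2s}\widetilde{A}\nabla\,\cdot\,)$: if a solution vanishes to infinite order at a point, it vanishes identically in the connected component. This will be established via an Almgren-type frequency function, the associated monotonicity formula, and a doubling inequality (as advertised in the keywords). Once we have SUCP, since $\widetilde{U}$ vanishes on an open set of $\mathcal{U}$, it vanishes to infinite order at every point there, so $\widetilde{U}\equiv0$ in $\mathcal{U}$; propagating this through $\mathbb{R}^{n+1}_{+}$ by a standard chain-of-balls argument gives $U\equiv0$, and taking the trace at $y=0$ yields $u\equiv0$ in $\mathbb{R}^{n}$.

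The main obstacle is the strong unique continuation step for the weighted degenerate elliptic extension with genuinely variable coefficients. In the constant coefficient case one has the clean Almgren frequency identity for $\nabla\cdot(y^{1-2s}\nabla U)=0$; in our setting the matrix $\widetilde{A}(x,y)$ destroys the exact monotonicity, and one must first flatten/normalize $\widetilde{A}$ at the base point (by a smooth change of coordinates that fixes $\{y=0\}$, which is available because $A\in C^{\infty}$), then carry out the frequency function computation for the perturbed operator, absorbing the lower-order terms via Gronwall. The symmetry and uniform ellipticity \eqref{eq:ellipticity and symmetry condition} and the smoothness of $A$ from ($\mathcal{H}$) are precisely what is needed to make the perturbative Almgren argument and the resulting doubling inequality go through; everything else (the extension and the reflection) is formal once the functional setting for $A_{2}$-weighted Sobolev spaces is in place.
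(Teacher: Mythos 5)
Your overall architecture is the same as the paper's: Stinga--Torrea extension to the degenerate problem $\nabla_{x,y}\cdot(y^{1-2s}\widetilde{A}(x)\nabla_{x,y}U)=0$, identification of $\mathcal{L}^{s}u$ with the weighted normal derivative, reflection across $\{y=0\}$, an Almgren-frequency/doubling SUCP for the weighted operator, and propagation into the bulk. However, there is a genuine gap at the crucial step. You assert that because $U(\cdot,0)=0$ and $\lim_{y\to0^{+}}y^{1-2s}\partial_{y}U=0$ on $\mathcal{O}$, the reflected solution $\widetilde{U}$ ``vanishes on an open subset'' of $\mathbb{R}^{n+1}$, and hence vanishes to infinite order there. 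This does not follow. The set $\mathcal{O}\times\{0\}$ is a hypersurface, not an open subset of $\mathbb{R}^{n+1}$, and the even reflection places a copy of $U(x,|y|)$ in the lower half-space, which is not zero. Vanishing Cauchy data on the \emph{degenerate} hyperplane does not, by soft arguments, yield vanishing on an $(n+1)$-dimensional open set, nor does it directly yield the weighted infinite-order vanishing
\[
\lim_{r\to0}r^{-m}\int_{B^{n+1}(x_{0},r)}|y|^{1-2s}\,\widetilde{U}^{2}\,dx\,dy=0\quad\text{for all }m,
\]
which is the hypothesis the SUCP of Proposition \ref{Prop SUCP} actually requires. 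Establishing this is precisely the content of Theorem \ref{Theorem vanishing to infinite order} in the paper: one must run an iterative bootstrap (the ``one-step improvement'' plus iteration) that exploits the $C^{\infty}$ regularity of $\widetilde{U}$ in the $x$-direction, the conjugate equation satisfied by $W=y^{1-2s}\partial_{y}U$, and the elementary fact that $h(0)=0$ together with $\lim_{y\to0}y^{a}h'(y)=0$ forces $\lim_{y\to0}y^{a-1}h(y)=0$, in order to upgrade the two vanishing Cauchy conditions to $\lim_{y\to0^{+}}y^{-m}U(x,y)=0$ for every $m$. Without this step your argument never produces the infinite-order vanishing needed to start the frequency-function machine.

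A secondary, smaller point: your ``chain-of-balls'' propagation through $\mathbb{R}^{n+1}_{+}$ is essentially fine away from $\{y=0\}$ (where the equation is uniformly elliptic with smooth coefficients and weak UCP applies), but the paper treats the global propagation with some care, splitting into $s\ge\tfrac12$ (weak UCP in strips $\{\epsilon<y<1/\epsilon\}$) and $s<\tfrac12$ (an integration-by-parts energy identity showing the total weighted Dirichlet energy vanishes). You should at least note why the boundary term on $\{y=0\}$ causes no trouble when $s<\tfrac12$. The perturbative Almgren/doubling analysis you sketch for variable $\widetilde{A}$ is indeed what the Appendix carries out, so that part of your plan is sound.
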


\begin{thm}
\label{thm:(Runge-approximation-property)}(Runge approximation property)
Let $\Omega\subseteq\mathbb{R}^{n}$ be a bounded open set and $D\subseteq\mathbb{R}^{n}$
be an arbitrary open set containing $\Omega$ such that $\mathrm{int}(D\backslash\overline{\Omega})\neq\emptyset$.
If $A(x)$ satisfies the hypothesis ($\mathcal{H}$) and $q\in L^{\infty}(\Omega)$
satisfies \eqref{eq:eigenvalue condition}, then for any $f\in L^{2}(\Omega)$,
for any $\epsilon>0$, we can find a function $u_{\epsilon}\in H^{s}(\mathbb{R}^{n})$
which solves 
\[
(\mathcal{L}^{s}+q)u_{\epsilon}=0\mbox{ in }\Omega\mbox{ and supp}(u_{\epsilon})\subseteq\overline{D}
\]
and 
\[
\|u_{\epsilon}-f\|_{L^{2}(\Omega)}<\epsilon.
\]

\end{thm}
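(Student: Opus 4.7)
The proof plan is a Hahn--Banach duality argument that converts the approximation statement into a Cauchy-data vanishing problem in the exterior of $\Omega$, which is then closed by the strong uniqueness property of Theorem \ref{thm:(Approximation-theorem)}.

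Let $V \subseteq L^2(\Omega)$ denote the space of restrictions $u|_\Omega$ with $u \in H^s(\mathbb{R}^n)$ solving $(\mathcal{L}^s+q)u = 0$ in $\Omega$ and $\mathrm{supp}(u) \subseteq \overline{D}$. The conclusion of the theorem is exactly the density of $V$ in $L^2(\Omega)$, so by Hahn--Banach it suffices to show that any $v \in L^2(\Omega)$ with $\int_\Omega v u\,dx = 0$ for every $u \in V$ must vanish. Given such a $v$, extend it by zero to $\mathbb{R}^n$; by the eigenvalue hypothesis \eqref{eq:eigenvalue condition} and Proposition \ref{props:wellposedness}, there is a unique $\phi \in \widetilde{H}^s(\Omega)$ with $(\mathcal{L}^s+q)\phi = v$ in $\Omega$ and $\phi|_{\Omega_e}=0$.

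For any test function $g \in C_c^\infty(\mathrm{int}(D\backslash\overline{\Omega}))$, well-posedness produces $u_g \in H^s(\mathbb{R}^n)$ with $(\mathcal{L}^s+q)u_g = 0$ in $\Omega$ and $u_g|_{\Omega_e} = g$. Since $\mathrm{supp}(g) \subseteq D$ and $\Omega \subseteq D$, the full function $u_g$ is supported in $\overline{D}$, hence $u_g|_\Omega \in V$. Testing $\phi$ against $u_g$ via the symmetric bilinear form $\mathcal{B}_q$ introduced in Section~2.2 and splitting $\mathbb{R}^n = \Omega \cup \Omega_e$, the quadratic contributions involving $q\phi u_g$ cancel and one arrives at the key identity
\[
\int_\Omega v\,u_g\,dx \;=\; -\int_{\Omega_e} g\,\mathcal{L}^s \phi\,dx.
\]
The orthogonality assumption forces the left-hand side to vanish for every admissible $g$, so $\mathcal{L}^s \phi \equiv 0$ on $\mathrm{int}(D\backslash\overline{\Omega})$. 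Since $\phi$ also vanishes on this set, the strong uniqueness property (Theorem \ref{thm:(Approximation-theorem)}) forces $\phi \equiv 0$ in $\mathbb{R}^n$, and therefore $v = (\mathcal{L}^s+q)\phi = 0$ in $\Omega$.

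The main technical obstacle is a careful justification of the displayed integration-by-parts identity, since $\mathcal{L}^s$ is only defined via the spectral theorem, $u_g$ is not compactly supported in $\Omega$, and $\mathcal{L}^s \phi$ is only a distribution a priori. This is where the bilinear form $\mathcal{B}_q$ pays off: once $\mathcal{B}_q$ is known to be symmetric and bounded on $H^s(\mathbb{R}^n)\times H^s(\mathbb{R}^n)$, the identity $\mathcal{B}_q(\phi,u_g) = \mathcal{B}_q(u_g,\phi)$ combined with $(\mathcal{L}^s+q)\phi = v\chi_\Omega$, the vanishing of $\phi$ outside $\Omega$, and $(\mathcal{L}^s+q)u_g = 0$ in $\Omega$ yields the displayed identity after moving $\mathcal{L}^s$ from $\phi$ across to $g$ in the exterior. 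The overall structure follows the Runge-approximation scheme of \cite{ghosh2016calder} for the fractional Laplacian; the only new ingredient is the strong uniqueness result Theorem \ref{thm:(Approximation-theorem)} for the variable-coefficient operator $\mathcal{L}^s$, which replaces the corresponding unique continuation statement for $(-\Delta)^s$.
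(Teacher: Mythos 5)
Your proposal is correct and follows essentially the same route as the paper's proof (Lemma \ref{lem:Approximation Lemma}): Hahn--Banach reduction to an orthogonality condition, introduction of the adjoint-type solution $\phi\in\widetilde{H}^{s}(\Omega)$ of $(\mathcal{L}^{s}+q)\phi=v$, the identity $\mathcal{B}_{q}(\phi,g)=-(v,r_{\Omega}u_{g})_{\Omega}$ obtained by testing the bilinear form both ways, and the conclusion $\phi|_{\mathcal{O}}=\mathcal{L}^{s}\phi|_{\mathcal{O}}=0$ closed by the strong uniqueness property of Theorem \ref{thm:(Approximation-theorem)}. The justification of the key identity you flag as the technical obstacle is handled in the paper exactly as you suggest, by writing $\mathcal{B}_{q}(\phi,g)=\mathcal{B}_{q}(\phi,g-u_{g})$ and using that $g-u_{g}\in\widetilde{H}^{s}(\Omega)$ is an admissible test function for both equations.
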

The paper is organized as follows. In Section~\ref{Section 2}, we
will give a brief review of the background knowledge required in our
paper, including the definition of the operator $\mathcal{L}^{s}$.
Some results for the Dirichlet problem, including the well-posedness
and the definition of the corresponding DN map, associated with the
nonlocal operator $\mathcal{L}^{s}$ will be established in Section~\ref{Section 3}.
In Section \ref{Section 4}, we will show that the nonlocal problem
in $\mathbb{R}^{n}$ is related to a extension degenerate local elliptic
problem in $\mathbb{R}^{n}\times(0,\infty)$, which was first characterized
by \cite{stinga2010extension}. We also introduce suitable regularity
results for the nonlocal operator $\mathcal{L}^{s}$ in $\mathbb{R}^{n}$,
and its extension operator in $\mathbb{R}^{n}\times(0,\infty)$. These
regularity results play the essential role to achieve our desired
results. We hope that this could be of some independent interests.
In Section \ref{Section 5}, we will derive the strong unique continuation
property (SUCP) for variable fractional operators and we prove Theorems
\ref{thm:(Approximation-theorem)} and \ref{thm:(Runge-approximation-property)}.
In Section \ref{Section 6}, we prove the  nonlocal type  Calderón
problem, Theorem \ref{thm: Main}. In Appendix, we offer the proof
of the existence, uniqueness, and related properties including the
Almgren type frequency function method and the associated doubling
inequality for the degenerate elliptic problem.

\section*{Acknowledgment}

The authors would like to thank Professor Gunther Uhlmann for suggesting
the problem, and also to thank Professor Mikko Salo, Professor Gunther
Uhlmann and Doctor Hui Yu for helpful discussions. Y. H. L. is partially
supported by MOST of Taiwan under the project 160-2917-I-564-048.

\section{Preliminaries\label{Section 2}}

In this section, we will discuss some key properties for the variable
coefficients fractional nonlocal operator $\mathcal{L}^{s}=(-\nabla\cdot(A(x)\nabla))^{s}$.
For $A(x)$ being an identity matrix, the operator $\mathcal{L}^{s}$
becomes the well-known fractional Laplacian operator $(-\Delta)^{s}$,
and the detailed study about the $(-\Delta)^{s}$ is available in
\cite{bogdan1997boundary,cabre2014nonlinear,caffarelli2007extension,caffarelli2016fractional,ros2012pohozaev,ros2014dirichlet,ruland2015unique,seeleycomplex,silvestre2007regularity}.

\subsection{Spectral Theory}

\label{sec:SpectralTheory} We sketch in this section some basis of
the spectral theory which will be used in this paper. For details,
readers can refer to the references \cite{riesz1990functional,rudin1991functional,taylor1968functions},
etc. 

Let $\mathcal{L}$ be a non-negative definite and self-adjoint operator
densely defined in a Hilbert space, say, $L^{2}(\mathbb{R}^{n})$.
Let $\phi$ be a real-valued measurable function defined on the spectrum
of $\mathcal{L}$. Then the following defined $\phi(\mathcal{L})$
is also a self-adjoint operator in $L^{2}(\mathbb{R}^{n})$, 
\[
\phi(\mathcal{L}):=\int_{0}^{\infty}\phi(\lambda)\,dE_{\lambda},
\]
where $\{E_{\lambda}\}$ is the spectral resolution of $\mathcal{L}$
and each $E_{\lambda}$ is a projection in $L^{2}(\mathbb{R}^{n})$
(see for instance, \cite{grigoryan2009heat}). The domain of $\phi(\mathcal{L})$
is given by 
\[
\mathrm{Dom}(\phi(\mathcal{L}))=\left\{ f\in L^{2}(\mathbb{R}^{n});\int_{0}^{\infty}|\phi(\lambda)|^{2}\,d\|E_{\lambda}f\|^{2}<\infty\right\} .
\]
The linear operator $\phi(\mathcal{L}):\mathrm{Dom}(\phi(\mathcal{L}))\rightarrow L^{2}(\mathbb{R}^{n})$
is understood, via Riesz representation theorem, in the following
sense, 
\[
\left\langle \phi(\mathcal{L})f,g\right\rangle :=\int_{0}^{\infty}\phi(\lambda)\,d\langle E_{\lambda}f,g\rangle,\quad f\in\mathrm{Dom}(\phi(\mathcal{L})),\ g\in L^{2}(\mathbb{R}^{n}),
\]
where $\langle\cdot,\cdot\rangle$ denotes the (real) inner product
in $L^{2}(\mathbb{R}^{n})$.

Now we are in a position to define the fractional operator $\mathcal{L}^{s}$.
Notice that $\lambda^{s}=\int_{0}^{\infty}(e^{-t\lambda}-1)t^{-1-s}dt/\Gamma(-s)$
for $s\in(0,1)$, where $\Gamma(-s):=-\Gamma(1-s)/s$, and $\Gamma$
is the Gamma function. We define, given $s\in(0,1)$, 
\begin{equation}
\mathcal{L}^{s}:=\int_{0}^{\infty}\lambda^{s}\,dE_{\lambda}=\frac{1}{\Gamma(-s)}\int_{0}^{\infty}\left(e^{-t\mathcal{L}}-\mbox{Id}\right)\,\frac{dt}{t^{1+s}},\label{eq:1111}
\end{equation}
where $e^{-t\mathcal{L}}$ given by 
\begin{equation}
e^{-t\mathcal{L}}:=\int_{0}^{\infty}e^{-t\lambda}\,dE_{\lambda}\label{eq:heat-semigroup}
\end{equation}
is a bounded self-adjoint operator in $L^{2}(\mathbb{R}^{n})$ for
each $t\ge0$. The operator family $\{e^{-t\mathcal{L}}\}_{t\ge0}$
is called the heat semigroup associated with $\mathcal{L}$ (cf. \cite{pazy2012semigroups}).
The domain of $\mathcal{L}^{s}$ is given by 
\begin{equation}
\mathrm{Dom}(\mathcal{L}^{s})=\left\{ f\in L^{2}(\mathbb{R}^{n});\int_{0}^{\infty}\lambda^{2s}\,d\|E_{\lambda}f\|^{2}<\infty\right\} .\label{eq:Domain}
\end{equation}
Notice for any $f\in\mathrm{Dom}(\mathcal{L}^{s})$ that, $\mathcal{L}^{s}f\in L^{2}(\mathbb{R}^{n})$
and is given, again in the sense of Riesz representation theorem,
by the formula 
\begin{equation}
\langle\mathcal{L}^{s}f,g\rangle=\frac{1}{\Gamma(-s)}\int_{0}^{\infty}\left\langle \left(e^{-t\mathcal{L}}f-f\right),g\right\rangle \frac{dt}{t^{1+s}},\quad g\in L^{2}(\mathbb{R}^{n}),\label{eq:heat representation for L^s}
\end{equation}
when $s\in(0,1)$.
\begin{rem}
We remark here that 
\begin{equation}
\mathrm{Dom}(\mathcal{L})\subseteq\mathrm{Dom}(\mathcal{L}^{s}),\quad s\in(0,1).\label{eq:DomL^s subset DomL}
\end{equation}
In fact, for any $f\in\mathrm{Dom}(\mathcal{L})\subseteq L^{2}(\mathbb{R}^{n})$,
one has 
\[
\begin{split}\int_{0}^{\infty}\lambda^{2s}\,d\|E_{\lambda}f\|^{2} & =\int_{1}^{\infty}\lambda^{2s}\,d\|E_{\lambda}f\|^{2}+\int_{0}^{1}\lambda^{2s}\,d\|E_{\lambda}f\|^{2}\\
 & \le\int_{0}^{\infty}\lambda^{2}\,d\|E_{\lambda}f\|^{2}+\int_{0}^{\infty}\,d\|E_{\lambda}f\|^{2}\\
 & =\|\mathcal{L}f\|_{L^{2}(\mathbb{R}^{n})}^{2}+\|f\|_{L^{2}(\mathbb{R}^{n})}^{2}<\infty.
\end{split}
\]

\end{rem}

\subsection{Sobolev Spaces}

For simplicity, we shall always consider real function spaces in this
paper. Our notations for Sobolev spaces are mainly followed by \cite{mclean2000strongly}.

Let $a\in\mathbb{R}$ be a constant. Let $H^{a}(\mathbb{R}^{n})=W^{a,2}(\mathbb{R}^{n})$
be the (fractional) Sobolev space endowed with the norm 
\[
\|u\|_{H^{a}(\mathbb{R}^{n})}:=\left\Vert \mathscr{F}^{-1}\big\{\left\langle \xi\right\rangle ^{a}\mathscr{F}u\big\}\right\Vert _{L^{2}(\mathbb{R}^{n})},
\]
where $\left\langle \xi\right\rangle =(1+|\xi|^{2})^{\frac{1}{2}}$.
It is known that for $s\in(0,1)$, $\|\cdot\|_{H^{s}(\mathbb{R}^{n})}$
has the following equivalent form 
\begin{equation}
\|u\|_{H^{s}(\mathbb{R}^{n})}:=\|u\|_{L^{2}(\mathbb{R}^{n})}+[u]_{H^{s}(\mathbb{R}^{n})}\label{eq:NormHs}
\end{equation}
where 
\[
[u]_{H^{s}(\mathcal{O})}^{2}:=\int_{\mathcal{O}\times\mathcal{O}}\frac{\left|u(x)-u(z)\right|^{2}}{|x-z|^{n+2s}}dxdz,
\]
for any open set $\mathcal{O}$ of $\mathbb{R}^{n}$.

Given any open set $\mathcal{O}$ of $\mathbb{R}^{n}$ and $a\in\mathbb{R}$,
we denote the following Sobolev spaces, 
\begin{align*}
H^{a}(\mathcal{O}) & :=\{u|_{\mathcal{O}};\,u\in H^{a}(\mathbb{R}^{n})\},\\
\widetilde{H}^{a}(\mathcal{O}) & :=\text{closure of \ensuremath{C_{c}^{\infty}(\mathcal{O})} in \ensuremath{H^{a}(\mathbb{R}^{n})}},\\
H_{0}^{a}(\mathcal{O}) & :=\text{closure of \ensuremath{C_{c}^{\infty}(\mathcal{O})} in \ensuremath{H^{a}(\mathcal{O})}},
\end{align*}
and 
\[
H_{\overline{\mathcal{O}}}^{a}:=\{u\in H^{a}(\mathbb{R}^{n});\,\mathrm{supp}(u)\subset\overline{\Omega}\}.
\]
The Sobolev space $H^{a}(\mathcal{O})$ is complete under the norm
\[
\|u\|_{H^{a}(\mathcal{O})}:=\inf\left\{ \|v\|_{H^{a}(\mathbb{R}^{n})};v\in H^{a}(\mathbb{R}^{n})\mbox{ and }v|_{\mathcal{O}}=u\right\} .
\]
It is known that $\widetilde{H}^{a}(\mathcal{O})\subseteq H_{0}^{a}(\mathcal{O})$,
and that $H_{\overline{\mathcal{O}}}^{a}$ is a closed subspace of
$H^{a}(\mathbb{R}^{n})$.
\begin{lem}
(\cite{mclean2000strongly}) Let $\Omega$ be a Lipschitz domain in
$\mathbb{R}^{n}$. Then

1. For any $a\in\mathbb{R}$, 
\[
\begin{split} & \widetilde{H}^{a}(\Omega)=H_{\overline{\Omega}}^{a}\subseteq H_{0}^{a}(\Omega),\\
 & \left(H^{a}(\Omega)\right)^{*}=\widetilde{H}^{-a}(\Omega)\mbox{ and }\left(\widetilde{H}^{a}(\Omega)\right)^{*}=H^{-a}(\Omega).
\end{split}
\]

2. For $a\ge0$ and $a\notin\{\frac{1}{2},\frac{3}{2},\frac{5}{2},\frac{7}{2},\ldots\}$,
\[
\widetilde{H}^{a}(\Omega)=H_{0}^{a}(\Omega).
\]

\end{lem}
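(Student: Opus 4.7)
The plan is to prove the four assertions in the order they are stated, reducing each one to the Lipschitz character of $\partial\Omega$ combined with density, Hahn--Banach, and boundary-flattening arguments. The only genuinely delicate step is the reverse inclusion $H^a_{\overline\Omega}\subseteq\widetilde H^a(\Omega)$, which once established makes the other statements in Part~1 essentially formal.

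\textbf{Part~1, identification.} The easy inclusion $\widetilde{H}^a(\Omega)\subseteq H_{\overline{\Omega}}^{a}$ follows because every $\varphi\in C_c^\infty(\Omega)$ has $\mathrm{supp}(\varphi)\subset\overline\Omega$, and $H^a_{\overline\Omega}$ is closed in $H^a(\mathbb{R}^n)$: if $u_k\to u$ in $H^a$ and each $u_k$ annihilates every test function supported in $\mathbb{R}^n\setminus\overline\Omega$, so does $u$. For the reverse inclusion, given $u\in H^a(\mathbb R^n)$ with $\mathrm{supp}(u)\subset\overline\Omega$, I would fix a finite atlas of boundary charts in which $\partial\Omega$ becomes a Lipschitz graph and $\Omega$ locally lies on one side; a subordinate partition of unity reduces the problem to the case where $u$ is supported in one such chart. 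A translation of $u$ by a vector transverse to the graph of magnitude $\delta>0$ pointing into $\Omega$ pushes the support strictly into $\Omega$, with the Lipschitz graph property providing a single transverse direction that works uniformly on the chart. The translated piece $u_\delta$ is then mollified to lie in $C_c^\infty(\Omega)$, and $u_\delta\to u$ in $H^a(\mathbb R^n)$ as $\delta\to 0^+$ by $L^2$-continuity of translations (visible on the Fourier side for every real $a$). The inclusion $\widetilde{H}^a(\Omega)\subseteq H_0^a(\Omega)$ is then immediate from continuity of the restriction $r:H^a(\mathbb R^n)\to H^a(\Omega)$.

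\textbf{Dualities.} I would define the pairing
\[
\langle v,u\rangle_\Omega:=\langle v,U\rangle_{H^{-a}(\mathbb R^n)\times H^a(\mathbb R^n)},\qquad v\in\widetilde{H}^{-a}(\Omega),\ u\in H^a(\Omega),
\]
where $U\in H^a(\mathbb R^n)$ is any extension of $u$. Independence of $U$ follows because $\widetilde H^{-a}(\Omega)=H^{-a}_{\overline\Omega}$ by Part~1 applied at order $-a$, so $v$ is supported in $\overline\Omega$, while two extensions differ by an element whose restriction to $\Omega$ vanishes; the latter is annihilated by distributions supported in $\overline\Omega$ after the standard density reduction to test functions. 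Boundedness of the pairing against the quotient norm on $H^a(\Omega)$ gives a continuous injection $\widetilde{H}^{-a}(\Omega)\hookrightarrow(H^a(\Omega))^*$. Surjectivity comes from Hahn--Banach: an $\ell\in(H^a(\Omega))^*$ yields $\ell\circ r\in (H^a(\mathbb R^n))^*\cong H^{-a}(\mathbb R^n)$, and the condition that $\ell\circ r$ vanishes on every $U$ with $U|_\Omega=0$ forces $\mathrm{supp}(\ell\circ r)\subset\overline\Omega$, i.e.\ $\ell\circ r\in H^{-a}_{\overline\Omega}=\widetilde{H}^{-a}(\Omega)$. The second duality $(\widetilde H^a(\Omega))^*=H^{-a}(\Omega)$ is obtained by the same reasoning with the roles of $a$ and $-a$ interchanged.

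\textbf{Part~2 and the main obstacle.} For $a\geq 0$ with $a\notin\tfrac12+\mathbb Z_{\ge 0}$, only the inclusion $H_0^a(\Omega)\subseteq\widetilde H^a(\Omega)$ requires argument. I would show that the zero-extension operator $E_0:C_c^\infty(\Omega)\to H^a(\mathbb R^n)$ is bounded with respect to the $H^a(\Omega)$ norm, so it extends by density to a bounded map $E_0:H_0^a(\Omega)\to H^a(\mathbb R^n)$ whose image lies in $H^a_{\overline\Omega}=\widetilde H^a(\Omega)$. Boundedness of $E_0$ is local in nature and, after flattening the boundary by a bi-Lipschitz change of variables, reduces to the half-space statement that $H^a(\mathbb R^n_+)$-functions with vanishing trace in $H^{a-1/2}(\mathbb R^{n-1})$ extend by zero to $H^a(\mathbb R^n)$. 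The delicate point, and indeed the reason for excluding the half-integer values, is that at $a=k+\tfrac12$ the trace theory degenerates and the zero-extension is not bounded; this is the main obstacle in Part~2, and is also why the Lipschitz hypothesis on $\partial\Omega$ cannot be dropped, being needed both for the transverse translation in Part~1 and for the boundary flattening here.
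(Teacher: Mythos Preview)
The paper does not supply its own proof of this lemma: it is stated with a citation to McLean's monograph and used as a black box throughout. There is therefore nothing in the paper to compare your argument against. Your sketch is essentially the standard route taken in that reference---transverse translation plus mollification for the identification $\widetilde H^a(\Omega)=H^a_{\overline\Omega}$, a Hahn--Banach/annihilator argument for the dualities, and boundedness of zero-extension away from half-integer exponents for Part~2---so in substance you are reconstructing the cited proof rather than offering an alternative.

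One point to tighten: in the duality step you write that two extensions of $u\in H^a(\Omega)$ differ by an element ``whose restriction to $\Omega$ vanishes'' and that such an element is annihilated by distributions supported in $\overline\Omega$. The correct formulation is that the difference lies in $H^a_{\overline{\Omega_e}}$ (support in the closed complement), and what you need is that the duality pairing between $H^{-a}_{\overline\Omega}$ and $H^a_{\overline{\Omega_e}}$ vanishes. This is not automatic from support considerations alone, since the supports meet along $\partial\Omega$; it follows by approximating $v\in H^{-a}_{\overline\Omega}=\widetilde H^{-a}(\Omega)$ by $C_c^\infty(\Omega)$ functions (which is exactly Part~1 at order $-a$) and using that those test functions have support disjoint from $\overline{\Omega_e}$. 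You gesture at this with ``standard density reduction,'' but the logical dependence on the identification you just proved should be made explicit.
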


\subsection{The Operator $\mathcal{L}^{s}$}

\label{sec:LsHsHeatkernel} In this paper, we consider $\mathcal{L}$
to be a linear second order partial differential operator of the divergence
form 
\begin{equation}
\mathcal{L}:=-\nabla\cdot(A(x)\nabla).\label{eq: Local elliptic operator-1}
\end{equation}
We assume that the $n$-by-$n$ matrix $A(x)=(a_{ij}(x))_{i,j=1}^{n}$
is symmetric and that $\mathcal{L}$ is uniformly elliptic, namely,
\begin{equation}
\Lambda^{-1}|\xi|^{2}\leq\xi^{T}A(x)\xi\leq\Lambda|\xi|^{2}\quad\mbox{for all }x,\xi\in\mathbb{R}^{n},
\end{equation}
for some positive constant $\Lambda$. We also assume that the variable
coefficients of $\mathcal{L}$ are smooth, i.e., 
\begin{equation}
a_{ij}=a_{ji}\in C^{\infty},\quad1\le i,j\le n.\label{eq:Asmoothsymmetric}
\end{equation}

It is easy to see that the operator $\mathcal{L}$ introduced in \eqref{eq: Local elliptic operator-1}-\eqref{eq:Asmoothsymmetric}
is well-defined on $C_{0}^{\infty}(\mathbb{R}^{n})$, which is dense
in the Hilbert space $L^{2}(\mathbb{R}^{n})$. However, $\mathcal{L}$
is not self-adjoint in the domain $C_{0}^{\infty}(\mathbb{R}^{n})$.
In fact, one can verify in this case that, the adjoint operator admits
the domain $\mathrm{Dom}(\mathcal{L}^{*})=\{f\in L^{2}(\mathbb{R}^{n});\,\mathcal{L}f\in L^{2}(\mathbb{R}^{n})\}$,
which does not coincide with $C_{0}^{\infty}(\mathbb{R}^{n})$. In
order to define the fractional power $\mathcal{L}^{s}$ by applying
the spectral theory we briefly sketched in Section~\ref{sec:SpectralTheory},
one needs firstly to extend $\mathcal{L}$ as a self-adjoint operator
densely defined in $L^{2}(\mathbb{R}^{n})$.

It is known, see for instance \cite{grigoryan2009heat}, that $\mathcal{L}$
with the domain 
\begin{equation}
\mathrm{Dom}(\mathcal{L})=H^{2}(\mathbb{R}^{n})\label{eq:DomL}
\end{equation}
is the maximal extension such that $\mathcal{L}$ is self-adjoint
and densely defined in $L^{2}(\mathbb{R}^{n})$. Moreover, it is natural
to expect that $\mathrm{Dom}(\mathcal{L}^{s})$ is close to the Sobolev
space $H^{2s}(\mathbb{R}^{n})$, which is shown, at least when $s=1/2$,
that $\mathrm{Dom}(\mathcal{L}^{s})=H^{2s}(\mathbb{R}^{n})$ (cf.
\cite{davies1990heat,grigoryan2009heat}). Next, we would like to
extend the definition of $\mathcal{L}^{s}$ from its domain $\mathrm{Dom}(\mathcal{L}^{s})$
introduced in \eqref{eq:Domain} to $H^{s}(\mathbb{R})$, using heat
kernels and theirs estimates.

It is known that for $\mathcal{L}$ satisfying \eqref{eq: Local elliptic operator-1}-\eqref{eq:Asmoothsymmetric},
the bounded operator $e^{-t\mathcal{L}}$ given in \eqref{eq:heat-semigroup}
admits a symmetric (heat) kernel $p_{t}(x,z)$ (cf. \cite{grigoryan2009heat}).
In other words, one has for any $t\in\mathbb{R}_{+}:=(0,\infty)$
and any $f\in L^{2}(\mathbb{R}^{n})$ that 
\begin{equation}
\left(e^{-t\mathcal{L}}f\right)(x)=\int_{\mathbb{R}^{n}}p_{t}(x,z)f(z)\,dz,\quad x\in\mathbb{R}^{n}.
\end{equation}
Moreover for any $t\in\mathbb{R}_{+}$, the kernel $p_{t}(\cdot,\cdot)$
is symmetric and admits the following estimates (cf. \cite{davies1990heat})
with some positive constants $c_{j}$ and $b_{j}$, $j=1,2$, 
\begin{equation}
c_{1}e^{-b_{1}\frac{|x-z|^{2}}{t}}t^{-\frac{n}{2}}\leq p_{t}(x,z)\leq c_{2}e^{-b_{2}\frac{|x-z|^{2}}{t}}t^{-\frac{n}{2}},\quad x,z\in\mathbb{R}^{n}.\label{eq:pointwise estimates for p_t}
\end{equation}
By applying similar arguments as in the proof of \cite[Theorem 2.4]{caffarelli2016fractional},
one has for $f,g\in\mathrm{Dom}(\mathcal{L}^{s})$ that 
\begin{equation}
\langle\mathcal{L}^{s}f,g\rangle=\frac{1}{2\Gamma(-s)}\int_{0}^{\infty}\int_{\mathbb{R}^{n}\times\mathbb{R}^{n}}(f(x)-f(z))(g(x)-g(z))p_{t}(x,z)dxdz\frac{dt}{t^{1+s}},\label{eq:LsIntegral0}
\end{equation}

Now we define 
\begin{equation}
\mathcal{K}_{s}(x,z):=\frac{1}{\Gamma(-s)}\int_{0}^{\infty}p_{t}(x,z)\frac{dt}{t^{1+s}}.\label{eq:kernel}
\end{equation}
It is seen from \eqref{eq:pointwise estimates for p_t} that $\mathcal{K}_{s}$
enjoys the following pointwise estimate 
\begin{equation}
\frac{C_{1}}{|x-z|^{n+2s}}\leq\mathcal{K}_{s}(x,z)=\mathcal{K}_{s}(z,x)\leq\dfrac{C_{2}}{|x-z|^{n+2s}},\quad x,z\in\mathbb{R}^{n},\label{eq:pointwise estimate for kernel K}
\end{equation}
with some positive constants $C_{1}$, $C_{2}$. Hence it is obtain
by recalling the norm \eqref{eq:NormHs} of $H^{s}(\mathbb{R}^{n})$
that for any $f,g\in H^{s}(\mathbb{R}^{n})$, the right hand side
(RHS) of \eqref{eq:LsIntegral0} coincides with 
\[
\frac{1}{2}\int_{\mathbb{R}^{n}\times\mathbb{R}^{n}}(f(x)-f(z))(g(x)-g(z))\mathcal{K}_{s}(x,z)dxdz.
\]
Therefore, it is natural to extend the definition of $\mathcal{L}^{s}$
from $\mathrm{Dom}(\mathcal{L}^{s})$ to $H^{s}(\mathbb{R}^{n})$
in the following distributional sense 
\begin{equation}
\langle\mathcal{L}^{s}f,g\rangle:=\frac{1}{2}\int_{\mathbb{R}^{n}\times\mathbb{R}^{n}}(f(x)-f(z))(g(x)-g(z))\mathcal{K}_{s}(x,z)dxdz.\label{eq:integral represent for nonlocal}
\end{equation}
Moreover, it is obtained from \eqref{eq:pointwise estimate for kernel K}
that, there exists a positive constant $C$ such that the operator
$\mathcal{L}^{s}$ defined in \eqref{eq:integral represent for nonlocal}
satisfies 
\begin{equation}
\left|\langle\mathcal{L}^{s}f,g\rangle\right|\le C\|u\|_{H^{s}(\mathbb{R}^{n})}\|v\|_{H^{s}(\mathbb{R}^{n})},\quad u,v\in H^{s}(\mathbb{R}^{n}).\label{eq:LsBoundedHs}
\end{equation}
Thus, the definition \eqref{eq:integral represent for nonlocal} gives
a bounded linear operator 
\[
\mathcal{L}^{s}:H^{s}(\mathbb{R}^{n})\longrightarrow H^{-s}(\mathbb{R}^{n}).
\]
We observe by using the symmetry $\mathcal{K}_{s}(x,z)=\mathcal{K}_{s}(z,x)$
that $\mathcal{L}^{s}$ is also symmetric, namely 
\begin{equation}
\langle\mathcal{L}^{s}f,g\rangle=\langle\mathcal{L}^{s}g,f\rangle,\quad f,g\in H^{s}(\mathbb{R}^{n}).\label{eq:LsSymmetric}
\end{equation}
Furthermore, it is obtained that 
\begin{align*}
\left\langle \mathcal{L}^{s}f,g\right\rangle = & \frac{1}{2}\lim_{\epsilon\to0^{+}}\int_{\mathbb{R}^{n}}\int_{|x-z|>\epsilon}(f(x)-f(z))(g(x)-g(z))\mathcal{K}_{s}(x,z)dxdz\\
= & \frac{1}{2}\lim_{\epsilon\to0^{+}}\int_{\mathbb{R}^{n}}\int_{|x-z|>\epsilon}(f(x)-f(z))g(x)\mathcal{K}_{s}(x,z)dxdz\\
 & +\frac{1}{2}\lim_{\epsilon\to0^{+}}\int_{\mathbb{R}^{n}}\int_{|x-z|>\epsilon}(f(x)-f(z))g(x)\mathcal{K}_{s}(x,z)dzdx\\
= & \int_{\mathbb{R}^{n}}g(x)\lim_{\epsilon\to0^{+}}\int_{|x-z|>\epsilon}(f(x)-f(z))\mathcal{K}_{s}(x,z)dzdx,
\end{align*}
holds for all $f,g\in H^{s}(\mathbb{R}^{n})$. Hence one can also
write 
\begin{equation}
\begin{split}\left(\mathcal{L}^{s}f\right)(x)= & \lim_{\epsilon\to0^{+}}\int_{|x-z|>\epsilon}(f(x)-f(z))\mathcal{K}_{s}(x,z)dz,\quad f\in H^{s}(\mathbb{R}^{n}).\end{split}
\label{eq:PV for L^s}
\end{equation}

\section{Dirichlet problems for $\mathcal{L}^{s}+q$\label{Section 3}}

In a continuation to the general case, we proceed our discussions
by introducing the state spaces followed by the Dirichlet problem
and associated DN map for for $\mathcal{L}^{s}+q$.

\subsection{Well-Posedness}

Throughout this section, we shall always let $\Omega\subseteq\mathbb{R}^{n}$
be a bounded Lipschitz domain, $q$ be a potential in $L^{\infty}(\Omega)$
and $s\in(0,1)$ be a constant. We consider the following nonlocal
Dirichlet problem for the nonlocal operator $\mathcal{L}^{s}$, 
\begin{equation}
\begin{cases}
(\mathcal{L}^{s}+q)u=f & \mbox{ in \ensuremath{\Omega}},\\
u=g & \mbox{ in }\Omega_{e}.
\end{cases}\label{eq:Nonlocal Dirichlet problem}
\end{equation}
Define the bilinear form $B_{q}(\cdot,\cdot)$ by 
\begin{equation}
\mathcal{B}_{q}(v,w):=\left\langle \mathcal{L}^{s}v,w\right\rangle +\int_{\Omega}q(x)v(x)w(x)\,dx,\quad v,w\in H^{s}(\mathbb{R}^{n})
\end{equation}
with $\mathcal{L}^{s}$ given by the form \eqref{eq:integral represent for nonlocal}.
It is seen from \eqref{eq:LsSymmetric} that $\mathcal{B}_{q}$ is
symmetric, and from \eqref{eq:LsBoundedHs} that $\mathcal{B}_{q}$
is a bounded in $H^{s}(\mathbb{R}^{n})\times H^{s}(\mathbb{R}^{n})$,
i.e., 
\begin{equation}
\left|\mathcal{B}_{q}(v,w)\right|\leq C\|v\|_{H^{s}(\mathbb{R}^{n})}\|w\|_{H^{s}(\mathbb{R}^{n})},\quad v,w\in H^{s}(\mathbb{R}^{n}).\label{eq:Upper bound of bilinear}
\end{equation}
It is further obtained that $\mathcal{B}_{q}$ can be also regarded
as a symmetric bounded bilinear form in the space $\widetilde{H}^{s}(\Omega)$.
In fact, by using \eqref{eq:Upper bound of bilinear} and the fact
that $C_{0}^{\infty}(\Omega)$ is dense in $\widetilde{H}^{s}(\Omega)$,
one can define for any $v\in\widetilde{H}^{s}(\Omega)$ that, 
\begin{equation}
\mathcal{B}_{q}(v,\phi)=\langle\mathcal{L}^{s}\tilde{v},\phi\rangle+\int_{\Omega}q(x)v(x)\phi(x)\,dx,\quad\phi\in C_{0}^{\infty}(\Omega),\label{eq:bilinear form1}
\end{equation}
where $\tilde{v}\in H^{s}(\mathbb{R}^{n})$ is an extension of $v$
such that $\tilde{v}|_{\Omega}=v$. 
\begin{defn}
Let $\Omega$ be a bounded Lipschitz domain in $\mathbb{R}^{n}$.
Given $f\in H^{-s}(\Omega)$ and $g\in H^{s}(\mathbb{R}^{n})$, we
say that $u\in H^{s}(\mathbb{R}^{n})$ is a (weak) solution of \eqref{eq:Nonlocal Dirichlet problem}
if $\widetilde{u}_{g}:=u-g\in\widetilde{H}^{s}(\Omega)$ and 
\begin{equation}
\mathcal{B}_{q}(u,\phi)=\left\langle f,\phi\right\rangle \quad\mbox{for any \ensuremath{\phi\in C_{0}^{\infty}(\Omega)}},\label{eq:weak solution via bilinear}
\end{equation}
or equivalently 
\begin{equation}
\mathcal{B}_{q}(\widetilde{u}_{g},\phi)=\left\langle f-(\mathcal{L}^{s}+q)g,\phi\right\rangle \quad\mbox{for any \ensuremath{\phi\in C_{0}^{\infty}(\Omega)}}.\label{eq:weak solution via bilinear1}
\end{equation}
\end{defn}
\begin{rem}
It is noticed that the space $C_{0}^{\infty}(\Omega)$ of test functions
in \eqref{eq:weak solution via bilinear} and \eqref{eq:weak solution via bilinear1}
can be replaced by $\widetilde{H}^{s}(\Omega)$. 
\end{rem}
The well-posedness of the Dirichlet problem \eqref{eq:Nonlocal Dirichlet problem}
is shown by the following more general result. 
\begin{prop}
\label{props:wellposedness} Let $\Omega$ be a bounded Lipschitz
domain in $\mathbb{R}^{n}$ and $q\in L^{\infty}(\Omega)$. The following
results hold.

(1) There is a countable set $\Sigma=\{\lambda_{j}\}_{j=1}^{\infty}$
of real numbers $\lambda_{1}\le\lambda_{2}\le\ldots\rightarrow\infty$,
such that given $\lambda\in\mathbb{R}\setminus\Sigma$, for any $f\in H^{-s}(\Omega)$
and any $g\in H^{s}(\mathbb{R}^{n})$, there is a unique $u\in H^{s}(\mathbb{R}^{n})$
satisfying $u-g\in\widetilde{H}^{s}(\Omega)$ and 
\begin{equation}
\mathcal{B}_{q}(u,v)-\lambda(u,v)_{L^{2}}=\left\langle f,v\right\rangle \quad\mbox{for any \ensuremath{v\in\widetilde{H}^{s}(\Omega)}}.\label{eq:forwardEigen}
\end{equation}
Moreover, 
\begin{equation}
\|u\|_{H^{s}(\mathbb{R}^{n})}\leq C_{0}\left(\|f\|_{H^{-s}(\Omega)}
+\|g\|_{H^{s}(\mathbb{R}^{n})}\right),\label{eq:well posed estimate}
\end{equation}
for some constant $C_{0}>0$ independent of $f$ and $g$.

(2) The condition \eqref{eq:eigenvalue condition} holds if and only
if $0\notin\Sigma$.

(3) If $q\ge0$ a.e. in $\Omega$, then $\Sigma\subseteq\mathbb{R}_{+}$,
and hence \eqref{eq:eigenvalue condition} always holds. \end{prop}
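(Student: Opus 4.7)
The overall plan is to reduce everything to an abstract Fredholm/compact self-adjoint operator argument on $L^{2}(\Omega)$, exactly in the style of the proof for the fractional Laplacian case in \cite{ghosh2016calder}. The crucial analytic input is that $\mathcal{B}_{q}$, shifted by a large multiple of the $L^{2}$ inner product, is coercive on $\widetilde{H}^{s}(\Omega)$. Using the integral representation \eqref{eq:integral represent for nonlocal} together with the lower bound $\mathcal{K}_{s}(x,z)\ge C_{1}|x-z|^{-n-2s}$ from \eqref{eq:pointwise estimate for kernel K}, one has $\langle\mathcal{L}^{s}w,w\rangle\ge C_{1}[w]_{H^{s}(\mathbb{R}^{n})}^{2}$ for every $w\in H^{s}(\mathbb{R}^{n})$. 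For $w\in\widetilde{H}^{s}(\Omega)$, the fractional Poincar\'e inequality on the bounded set $\Omega$ upgrades the seminorm to the full $H^{s}(\mathbb{R}^{n})$-norm, so there exist $c>0$ and $\mu_{0}>0$ with
\[
\mathcal{B}_{q}(w,w)+\mu_{0}\|w\|_{L^{2}(\Omega)}^{2}\ge c\|w\|_{H^{s}(\mathbb{R}^{n})}^{2},\qquad w\in\widetilde{H}^{s}(\Omega).
\]

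For part (1), first I reduce the inhomogeneous exterior data to the zero exterior data case by writing $u=w+g$ with $w\in\widetilde{H}^{s}(\Omega)$, giving the problem
\[
\mathcal{B}_{q}(w,v)-\lambda(w,v)_{L^{2}}=\langle f,v\rangle-\mathcal{B}_{q}(g,v)+\lambda(g,v)_{L^{2}}=:F_{\lambda,f,g}(v),
\]
where $F_{\lambda,f,g}\in(\widetilde{H}^{s}(\Omega))^{*}=H^{-s}(\Omega)$ by \eqref{eq:Upper bound of bilinear}. By the coercivity above combined with Lax--Milgram, for $\lambda=-\mu_{0}$ the problem has a unique solution $w\in\widetilde{H}^{s}(\Omega)$, defining a bounded solution operator $G_{\mu_{0}}:H^{-s}(\Omega)\to\widetilde{H}^{s}(\Omega)$. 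Composing with the compact inclusion $\widetilde{H}^{s}(\Omega)\hookrightarrow L^{2}(\Omega)$ (Rellich, since $\Omega$ is bounded Lipschitz) yields a compact, self-adjoint operator $K:=G_{\mu_{0}}\iota:L^{2}(\Omega)\to L^{2}(\Omega)$, where self-adjointness uses the symmetry of $\mathcal{B}_{q}$ recorded in \eqref{eq:LsSymmetric}. The spectral theorem for compact self-adjoint operators produces a discrete real spectrum $\{\mu_{j}\}\to 0$ of $K$, which via the shift $\lambda=\mu_{0}-\mu_{j}^{-1}$ gives the countable set $\Sigma=\{\lambda_{j}\}$ accumulating only at $+\infty$. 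Fredholm alternative then yields solvability for every $\lambda\notin\Sigma$ together with the bound \eqref{eq:well posed estimate}, after absorbing the $g$-terms via \eqref{eq:Upper bound of bilinear}.

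Part (2) is a direct translation: condition \eqref{eq:eigenvalue condition} is exactly the statement that the homogeneous problem ($f=0$, $g=0$, $\lambda=0$) has only the trivial solution in $\widetilde{H}^{s}(\Omega)$, i.e.\ that $0$ is not an eigenvalue, which is precisely $0\notin\Sigma$. For part (3), when $q\ge0$ a.e.\ in $\Omega$, one has $\int_{\Omega}qw^{2}\,dx\ge0$, so for any $\lambda\le0$,
\[
\mathcal{B}_{q}(w,w)-\lambda\|w\|_{L^{2}(\Omega)}^{2}\ge C_{1}[w]_{H^{s}(\mathbb{R}^{n})}^{2}-\lambda\|w\|_{L^{2}(\Omega)}^{2}\ge c\|w\|_{H^{s}(\mathbb{R}^{n})}^{2}
\]
on $\widetilde{H}^{s}(\Omega)$, again by fractional Poincar\'e. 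Coercivity forces uniqueness of the homogeneous problem at any such $\lambda$, so no non-positive $\lambda$ can lie in $\Sigma$; hence $\Sigma\subseteq\mathbb{R}_{+}$, and in particular \eqref{eq:eigenvalue condition} holds by part (2). The main technical obstacle is really only the coercivity step: one must carefully invoke the two-sided heat kernel estimates \eqref{eq:pointwise estimates for p_t} (and thus \eqref{eq:pointwise estimate for kernel K}) so that the Dirichlet form of $\mathcal{L}^{s}$ controls the full fractional seminorm; once that is in hand, the Fredholm/spectral machinery is routine.
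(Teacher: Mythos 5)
Your proposal is correct and follows essentially the same route as the paper: coercivity of the shifted bilinear form via the kernel lower bound, Lax--Milgram to build a solution operator, compactness from the Sobolev embedding, and the Fredholm alternative/spectral theory of compact self-adjoint operators to produce $\Sigma$, with (2) and (3) as direct consequences. The only (favourable) difference is that you are more careful than the paper in restricting the coercivity to $\widetilde{H}^{s}(\Omega)$ via a fractional Poincar\'e inequality, and in realizing the compact operator on $L^{2}(\Omega)$ rather than on $\widetilde{H}^{s}(\Omega)$; both are routine variants of the same argument.
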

\begin{proof}
It is obtained from \eqref{eq:pointwise estimate for kernel K} and
\eqref{eq:integral represent for nonlocal} that 
\begin{equation}
\langle\mathcal{L}^{s}v,v\rangle=\frac{1}{2}\int_{\mathbb{R}^{n}\times\mathbb{R}^{n}}\left|v(x)-v(z)\right|^{2}\mathcal{K}_{s}(x,z)dxdz\ge c_{0}\|v\|_{H^{s}(\mathbb{R}^{n})},
\end{equation}
for any $v\in H^{s}(\mathbb{R}^{n})$ with some constant $c_{0}>0$
independent of $v$. As a consequence, 
\begin{equation}
\mathcal{B}_{q}(v,v)+\lambda_{0}(v,v)_{L^{2}}\ge c_{0}\|v\|_{\widetilde{H}^{s}(\Omega)}^{2},
\end{equation}
for any $v\in\widetilde{H}^{s}(\Omega)$, where $\lambda_{0}$ is
a constant such that $\lambda_{0}\leq\|q_{-}\|_{L^{\infty}(\Omega)}$
with $q_{-}(x):=-\min\{0,q(x)\}$. On the other hand, it is easy to
see from \eqref{eq:Upper bound of bilinear} that 
\begin{equation}
\left|\mathcal{B}_{q}(w,v)+\lambda_{0}(w,v)_{L^{2}}\right|\le(C+\lambda_{0})\|w\|_{\widetilde{H}^{s}(\Omega)}\|v\|_{\widetilde{H}^{s}(\Omega)},
\end{equation}
holds for any $w,v\in\widetilde{H}^{s}(\Omega)$. Hence, we know that
the bilinear form $\mathcal{B}_{q}(\cdot,\cdot)+\lambda_{0}(\cdot,\cdot)_{L^{2}}$
is bounded and coercive. Therefore, given any $f\in H^{-s}(\Omega)=\left(\widetilde{H}^{s}(\Omega)\right)^{*}$,
there is a unique $u\in\widetilde{H}^{s}(\Omega)$ such that 
\begin{equation}
\mathcal{B}_{q}(u,v)+\lambda_{0}(u,v)_{L^{2}}=\left\langle f,v\right\rangle \quad\mbox{for any \ensuremath{v\in\widetilde{H}^{s}(\Omega)}},\label{eq:wellposeProof}
\end{equation}
and that 
\begin{equation}
\|u\|_{\widetilde{H}^{s}(\Omega)}\le C\|f\|_{H^{-s}(\Omega)}
\end{equation}
with some constant $C$ independent of $f$. Denote by $\mathcal{G}_{0}$
the operator mapping $f$ to the solution $u$ of \eqref{eq:wellposeProof}.
Then $\mathcal{G}_{0}$ is bounded from $H^{-s}(\Omega)$ to $\widetilde{H}^{s}(\Omega)$
with a bounded inverse.

Now, suppose $\widetilde{u}_{g}\in\widetilde{H}^{s}(\Omega)$ satisfying
\eqref{eq:forwardEigen} with $u=\widetilde{u}_{g}$. Then one has
\[
\widetilde{u}_{g}=\mathcal{G}_{0}\left(f+(\lambda+\lambda_{0})\tilde{u}_{g}\right),
\]
which implies 
\begin{equation}
\left(\frac{1}{\lambda+\lambda_{0}}\mathrm{Id}-\mathcal{G}_{0}\right)\widetilde{u}_{g}=\mathcal{G}_{0}f,\label{eq:proofWellposedness}
\end{equation}
where $\mathrm{Id}$ denotes the identity map in $\widetilde{H}^{s}(\Omega)$.
By compact Sobolev embedding, it is observed that $\mathcal{G}_{0}$
is compact in $\widetilde{H}^{s}(\Omega)$. Thus by the spectral properties
of compact operators, $\mathcal{G}_{0}^{-1}$ has discrete spectrum
$\{\frac{1}{\lambda_{j}+\lambda_{0}}\}_{j=1}^{\infty}$ consisting
only eigenvalues with $\lambda_{j}\rightarrow\infty$ as $j$ increases.
Denote $\Sigma=\{\lambda_{j}\}_{j=1}^{\infty}$. Then by the Fredholm
alternative one has for any $\lambda\notin\Sigma$, the operator 
\[
\left(\frac{1}{\lambda+\lambda_{0}}\mathrm{Id}-\mathcal{G}_{0}\right):\widetilde{H}^{s}(\Omega)\rightarrow\widetilde{H}^{s}(\Omega),
\]
is injective and has a bounded inverse. Therefore, the equation \eqref{eq:proofWellposedness}
is uniquely solvable, providing $\lambda\notin\Sigma$, with the following
estimate of the solution $\widetilde{u}_{g}$, 
\begin{equation}
\|\widetilde{u}_{g}\|_{\widetilde{H}^{s}(\Omega)}\le C\|f\|_{H^{-s}(\Omega)},
\end{equation}
for some constant $C>0$ independent of $\widetilde{u}_{g}$ and $f$.

The rest of the proof for the statement (1) is completed by considering
$\widetilde{u}_{g}=u-g$. The result in (2) is a direct consequence
of (1). Finally, by taking $\lambda_{0}=0$ in the previous arguments,
one already sees (3). 
\end{proof}
Next we consider the Dirichlet problem \eqref{eq:Nonlocal Dirichlet problem}
with a zero RHS, namely, 
\begin{equation}
\begin{cases}
(\mathcal{L}^{s}+q)\,u=0 & \mbox{ in \ensuremath{\Omega}},\\
u=g & \mbox{ in }\Omega_{e}.
\end{cases}\label{eq:Nonlocal Dirichlet problemHomo}
\end{equation}
In the rest of the paper, we shall always assume that $q\in L^{\infty}(\Omega)$
satisfies \eqref{eq:eigenvalue condition}, or equivalently, $0\notin\Sigma$
with the set $\Sigma$ given in Proposition~\ref{props:wellposedness}.
Under this assumption, it is shown in Proposition~\ref{props:wellposedness}
that given any $g\in H^{s}(\mathbb{R}^{n})$, the Dirichlet problem
\eqref{eq:Nonlocal Dirichlet problemHomo} admits a unique solution
$u\in H^{s}(\mathbb{R}^{n})$ such that 
\begin{equation}
\|u\|_{H^{s}(\mathbb{R}^{n})}\leq C\|g\|_{H^{s}(\mathbb{R}^{n})}.\label{eq:well posed estimateHomo}
\end{equation}
Recall that $u\in H^{s}(\mathbb{R}^{n})$ is called a solution of
\eqref{eq:Nonlocal Dirichlet problemHomo} if $u-g\in\widetilde{H}^{s}(\Omega)$
and $\mathcal{B}_{q}(u,v)=0$ for any $v\in\widetilde{H}^{s}(\Omega)$.
\begin{rem}
\label{rem:uIndependentg} We emphasize here that the solution $u$
of \eqref{eq:Nonlocal Dirichlet problemHomo} does not depends on
the value of $g$ in $\Omega$. To be more precise, let $g_{1},g_{2}\in H^{s}(\mathbb{R}^{n})$
be such that $g_{1}-g_{2}\in\widetilde{H}^{s}(\Omega)=H_{\overline{\Omega}}^{s}$.
Denote $u_{j}\in H^{s}(\mathbb{R}^{n})$ as the solution of \eqref{eq:Nonlocal Dirichlet problemHomo}
with the Dirichlet data $g_{j}$ for each $j=1,2$. It is observed
that 
\[
\tilde{u}:=u_{1}-u_{2}=(u_{1}-g_{1})-(u_{2}-g_{2})+(g_{1}-g_{2})\in\widetilde{H}^{s}(\Omega)
\]
and $\mathcal{B}_{q}(\tilde{u},v)=0$ for any $v\in\widetilde{H}^{s}(\Omega)$.
Thus by the unique solvability of \eqref{eq:Nonlocal Dirichlet problemHomo}
with $g=0$ one has $\tilde{u}=0$. Therefore, one can actually consider
the nonlocal problem \eqref{eq:Nonlocal Dirichlet problemHomo} with
Dirichlet data in the quotient space 
\begin{equation}
X:=H^{s}(\mathbb{R}^{n})/H_{\overline{\Omega}}^{s}\cong H^{s}(\Omega_{e}),\label{eq:Xquotient}
\end{equation}
provided that $\Omega$ is Lipschitz. 
\end{rem}

\subsection{The DN Map}

We define in this section the associated DN map for $\mathcal{L}^{s}+q$
via the bilinear form $\mathcal{B}_{q}$ in \eqref{eq:weak solution via bilinear}. 
\begin{prop}
\label{prop:DNmap} (DN map) Let $\Omega$ be a bounded Lipschitz
domain in $\mathbb{R}^{n}$ for $n\geq2$, $s\in(0,1)$ and $q\in L^{\infty}(\Omega)$
satisfy the eigenvalue condition \eqref{eq:eigenvalue condition}.
Let $X$ be the quotient space given in \eqref{eq:Xquotient}. Define
\begin{equation}
\left\langle \Lambda_{q}[g],[h]\right\rangle :=\mathcal{B}_{q}(u,h),\quad[g],[h]\in X,\label{eq:equvalent integration by parts}
\end{equation}
where $g,h\in H^{s}(\mathbb{R}^{n})$ are representatives of the classes
$[g],[h]\in X$ respectively, and $u\in H^{s}(\mathbb{R}^{n})$ is
the solution of \eqref{eq:Nonlocal Dirichlet problemHomo} with the
Dirichlet data $g$. Then, 
\[
\Lambda_{q}:X\to X^{*},
\]
which is bounded. Moreover, we have the following symmetry property
for $\Lambda_{q}$, 
\begin{equation}
\left\langle \Lambda_{q}[g],[h]\right\rangle =\left\langle \Lambda_{q}[h],[g]\right\rangle ,\quad[g],[h]\in X.\label{eq:adjoint operator}
\end{equation}
\end{prop}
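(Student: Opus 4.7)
The plan is to verify three things in turn: well-definedness of $\Lambda_{q}$ on the quotient space $X$, its boundedness as a map $X\to X^{*}$, and the symmetry relation \eqref{eq:adjoint operator}. The main tool is the symmetric bilinear form $\mathcal{B}_{q}$ together with the well-posedness Proposition~\ref{props:wellposedness} and the structural observation in Remark~\ref{rem:uIndependentg}; no new analytic estimates are needed.

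First I would check well-definedness. Fix representatives $g,h\in H^{s}(\mathbb{R}^{n})$ and let $u$ be the solution of \eqref{eq:Nonlocal Dirichlet problemHomo} with Dirichlet data $g$. If $g'$ is another representative of $[g]$, then $g-g'\in H^{s}_{\overline{\Omega}}=\widetilde{H}^{s}(\Omega)$, so by Remark~\ref{rem:uIndependentg} the solution $u'$ associated to $g'$ equals $u$; hence the right-hand side of \eqref{eq:equvalent integration by parts} is independent of the choice of representative of $[g]$. For the second slot, if $h'$ is another representative of $[h]$ then $h-h'\in\widetilde{H}^{s}(\Omega)$, and since $u$ solves \eqref{eq:Nonlocal Dirichlet problemHomo} we have $\mathcal{B}_{q}(u,h-h')=0$, so $\mathcal{B}_{q}(u,h)=\mathcal{B}_{q}(u,h')$. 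Thus \eqref{eq:equvalent integration by parts} defines a genuine bilinear pairing on $X\times X$.

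For the boundedness, combining the continuity estimate \eqref{eq:Upper bound of bilinear} for $\mathcal{B}_{q}$ with the a priori bound \eqref{eq:well posed estimateHomo} yields
\[
|\left\langle \Lambda_{q}[g],[h]\right\rangle |=|\mathcal{B}_{q}(u,h)|\le C\|u\|_{H^{s}(\mathbb{R}^{n})}\|h\|_{H^{s}(\mathbb{R}^{n})}\le C'\|g\|_{H^{s}(\mathbb{R}^{n})}\|h\|_{H^{s}(\mathbb{R}^{n})}
\]
for every choice of representatives $g,h$. Since the left-hand side depends only on the classes $[g],[h]$, taking the infimum over representatives on the right gives $|\left\langle \Lambda_{q}[g],[h]\right\rangle |\le C'\|[g]\|_{X}\|[h]\|_{X}$, which is exactly the statement that $\Lambda_{q}\colon X\to X^{*}$ is bounded.

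Finally, to prove the symmetry \eqref{eq:adjoint operator}, let $u_{g},u_{h}\in H^{s}(\mathbb{R}^{n})$ denote the solutions of \eqref{eq:Nonlocal Dirichlet problemHomo} with Dirichlet data $g$ and $h$ respectively. Since $h-u_{h}\in\widetilde{H}^{s}(\Omega)$ and $u_{g}$ is a weak solution, $\mathcal{B}_{q}(u_{g},h-u_{h})=0$, so $\mathcal{B}_{q}(u_{g},h)=\mathcal{B}_{q}(u_{g},u_{h})$. By the same argument with the roles of $g,h$ exchanged, $\mathcal{B}_{q}(u_{h},g)=\mathcal{B}_{q}(u_{h},u_{g})$. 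The symmetry \eqref{eq:LsSymmetric} of $\mathcal{L}^{s}$ together with the pointwise symmetry of the potential term shows $\mathcal{B}_{q}(u_{g},u_{h})=\mathcal{B}_{q}(u_{h},u_{g})$, and chaining these three equalities gives $\left\langle \Lambda_{q}[g],[h]\right\rangle =\left\langle \Lambda_{q}[h],[g]\right\rangle $. Of the three steps, none is really difficult; the only point that requires any care is keeping track of which representative is used where, which is handled cleanly by invoking Remark~\ref{rem:uIndependentg} at the outset.
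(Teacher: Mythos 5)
Your proposal is correct and follows essentially the same route as the paper's proof: well-definedness via Remark~\ref{rem:uIndependentg} and the weak formulation, boundedness via \eqref{eq:Upper bound of bilinear} combined with the well-posedness estimate and passing to the quotient norm, and symmetry reduced to the symmetry of $\mathcal{B}_{q}$. You merely spell out the symmetry step (the chain $\mathcal{B}_{q}(u_{g},h)=\mathcal{B}_{q}(u_{g},u_{h})=\mathcal{B}_{q}(u_{h},u_{g})=\mathcal{B}_{q}(u_{h},g)$) in more detail than the paper, which states it as a direct consequence.
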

\begin{proof}
We first show that $\Lambda_{q}$ given in \eqref{eq:equvalent integration by parts}
is well-defined. Recall from Remark~\ref{rem:uIndependentg} that,
the solution to \eqref{eq:Nonlocal Dirichlet problemHomo} with Dirichlet
data $\tilde{g}\in H^{s}(\mathbb{R}^{n})$ is the same as the solution
with data $g$, as long as $\tilde{g}-g\in\widetilde{H}^{s}(\Omega)$.
Thus the RHS of \eqref{eq:equvalent integration by parts} is invariant
under the different choices of the representative $g\in H^{s}(\mathbb{R}^{n})$
for $[g]\in X$. In addition, one has 
\[
\mathcal{B}_{q}(u,\tilde{h})=\mathcal{B}_{q}(u,h)+\mathcal{B}_{q}(u,\tilde{h}-h)=\mathcal{B}_{q}(u,h),
\]
for any $\tilde{h}\in H^{s}(\mathbb{R}^{n})$ such that $\tilde{h}-h\in\widetilde{H}^{s}(\Omega)$.
Therefore, the RHS of \eqref{eq:equvalent integration by parts} is
well determined by $[g],[h]\in X$.

From the boundedness \eqref{eq:Upper bound of bilinear} of $\mathcal{B}_{q}$,
one has 
\begin{align*}
\left|\left\langle \Lambda_{q}[g],[h]\right\rangle \right| & \leq C\|g_{0}\|_{H^{s}(\mathbb{R}^{n})}\|h_{0}\|_{H^{s}(\mathbb{R}^{n})}\\
 & \leq C_{1}\|[g]\|_{X}\|[h]\|_{X},
\end{align*}
by properly choosing representatives $g_{0},h_{0}\in H^{s}(\mathbb{R}^{n})$
for $[g],[h]\in X$. The symmetry of $X$ is a direct consequence
of the symmetry of the bilinear form $\mathcal{B}_{q}$.

The proof is completed. 
\end{proof}
Recall that the quotient space $X$ is isometric to $H^{s}(\Omega_{e})$,
since $\Omega\subseteq\mathbb{R}^{n}$ is a Lipschitz domain. Hence
one can always regard the operator $\Lambda_{q}$ defined in Proposition~\ref{prop:DNmap}
as 
\[
\Lambda_{q}:H^{s}(\Omega_{e})\to\left(H^{s}(\Omega_{e})\right)^{*}=H_{\overline{\Omega_{e}}}^{-s}=\widetilde{H}^{-s}(\Omega_{e}).
\]

In general, for any $\widetilde{h}\in H^{s}(\mathbb{R}^{n})$ we have
\begin{align}
(\Lambda_{q}[g],[h])_{X^{*}\times X} & =\mathcal{B}_{q}(u_{g},\widetilde{h})\nonumber \\
 & =\int_{\mathbb{R}^{n}}\widetilde{h}\mathcal{L}^{s}u_{g}dx+\int_{\Omega}qu_{g}\widetilde{h}dx\nonumber \\
 & =\int_{\Omega_{e}}\widetilde{h}\mathcal{L}^{s}u_{g}dx\nonumber \\
 & =\int_{\Omega_{e}}h\mathcal{L}^{s}u_{g}\,dx.\label{t15}
\end{align}
We must note that, the above integral becomes zero whenever $h\in\widetilde{H}^{s}(\Omega_{e})$,
(i.e. whenever $h$ has support only in $\Omega_{e}$): 
\[
(\Lambda_{q}[g],[h])_{X^{*}\times X}=0,\mbox{ for any }h\in\widetilde{H}^{s}(\Omega_{e}).
\]
Then from \eqref{t15} we have 
\[
(\Lambda_{q}[g],[h])_{H_{\overline{\Omega}_{e}}^{-s}(\mathbb{R}^{n})\times H^{s}(\Omega_{e})}=\int_{\Omega_{e}}h\mathcal{L}^{s}u_{g}\,dx,\mbox{ for any }h\in H^{s}(\Omega_{e}).
\]
This implies that 
\begin{equation}
\Lambda_{q}[g]=\left.\mathcal{L}^{s}u_{g}\right|_{\Omega_{e}}.\label{t16}
\end{equation}
Let us continue to give another representation of $\Lambda_{q}[g]$
involving the Neumann operator $\mathcal{N}_{s}$. We introduce the
anisotropic nonlocal Neumann operator $\mathcal{N}_{s}$ analogues
to the Neumann operator which is initiated in \cite{dipierro2014nonlocal}
for the fractional Laplacian operator $(-\Delta)^{s}$. Here we define
the anisotropic nonlocal Neumann operator $\mathcal{N}_{s}$ for $\mathcal{L}^{s}$
over the exterior domain $\Omega_{e}$ as follows: 
\begin{equation}
\mathcal{N}_{s}u(x):=\int_{\Omega}\mathcal{K}_{s}(x,z)(u(x)-u(z))\,dz,\mbox{ for }x\in\Omega_{e}\mbox{ and }u\in H^{s}(\mathbb{R}^{n})\label{Neumann}
\end{equation}
where $\mathcal{K}_{s}(x,z)$ (cf. \eqref{eq:kernel}) is the kernel
of $\mathcal{L}$ introduced in \eqref{eq:PV for L^s}. 
\begin{lem}
Let $\Omega\subseteq\mathbb{R}^{n}$ as mentioned above. Then 
\begin{equation}
\Lambda_{q}[g]=\left.\left(\mathcal{N}_{s}u_{g}-mg+\mathcal{L}^{s}(E_{0}g)\right)\right|_{\Omega_{e}}.\label{t17}
\end{equation}
where $m\in C^{\infty}(\Omega_{e})$ is given by $m(x)=\int_{\Omega}\mathcal{K}_{s}(x,z)\,dz$
and $E_{0}$ is extension by zero, i.e. $E_{0}g=\chi_{\Omega_{e}}g$.\end{lem}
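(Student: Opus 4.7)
The starting point is the already-established identity \eqref{t16}, namely $\Lambda_{q}[g] = \mathcal{L}^{s}u_{g}|_{\Omega_{e}}$. My plan is to rewrite $\mathcal{L}^{s}u_{g}$ on $\Omega_{e}$ using the principal-value representation \eqref{eq:PV for L^s}, split the integration across the interface $\partial\Omega$, and match the three pieces with $\mathcal{N}_{s}u_{g}$, $\mathcal{L}^{s}(E_{0}g)$, and $-mg$. The whole calculation is essentially a matter of carefully adding and subtracting terms so as to exchange $u_{g}$ for $E_{0}g$, which differs from $u_{g}$ precisely by $u_{g}\chi_{\Omega}$.

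Concretely, fix $x\in\Omega_{e}$. By \eqref{eq:PV for L^s} and $\mathbb{R}^{n}=\Omega\cup\Omega_{e}$ (up to the measure-zero boundary),
\[
\mathcal{L}^{s}u_{g}(x) \;=\; \int_{\Omega}(u_{g}(x)-u_{g}(z))\mathcal{K}_{s}(x,z)\,dz\;+\;\lim_{\epsilon\to0^{+}}\int_{\{z\in\Omega_{e}:|x-z|>\epsilon\}}(u_{g}(x)-u_{g}(z))\mathcal{K}_{s}(x,z)\,dz.
\]
Since $x\in\Omega_{e}$ is bounded away from $\Omega$, the first integral is absolutely convergent and, because $u_{g}(x)=g(x)$ on $\Omega_{e}$, equals $\mathcal{N}_{s}u_{g}(x)$ by the very definition \eqref{Neumann}. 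For the second, the relation $u_{g}=g$ on $\Omega_{e}$ lets me replace $u_{g}$ by $g$ inside the PV integral. Separately, applying \eqref{eq:PV for L^s} to $E_{0}g=g\chi_{\Omega_{e}}$ at the same point $x\in\Omega_{e}$ gives
\[
\mathcal{L}^{s}(E_{0}g)(x) \;=\; \int_{\Omega}g(x)\,\mathcal{K}_{s}(x,z)\,dz\;+\;\lim_{\epsilon\to0^{+}}\int_{\{z\in\Omega_{e}:|x-z|>\epsilon\}}(g(x)-g(z))\mathcal{K}_{s}(x,z)\,dz,
\]
and the first term is exactly $m(x)g(x)$. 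Comparing the two displays yields $\mathcal{L}^{s}u_{g}(x)=\mathcal{N}_{s}u_{g}(x)+\mathcal{L}^{s}(E_{0}g)(x)-m(x)g(x)$ on $\Omega_{e}$, which is \eqref{t17}.

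The main technical point to justify is that all of these expressions make sense in the appropriate distributional/function-theoretic space. The smoothness $m\in C^{\infty}(\Omega_{e})$ follows because the heat kernel $p_{t}(x,z)$ (and hence $\mathcal{K}_{s}$ via \eqref{eq:kernel}) is smooth off the diagonal under hypothesis $(\mathcal{H})$, and for $x\in\Omega_{e}$ the integrand has no singularity in $z\in\Omega$. The splitting of the PV integral is legitimate since only the $\Omega_{e}$-piece contains the diagonal singularity $z=x$. Finally, to convert the pointwise identity into the duality identity defining $\Lambda_{q}[g]\in X^{*}$, I would test both sides against $\varphi\in C_{c}^{\infty}(\Omega_{e})$, using \eqref{t15} on the left and Fubini together with the kernel estimate \eqref{eq:pointwise estimate for kernel K} on the right; the positive distance between $\text{supp}(\varphi)$ and $\overline{\Omega}$ makes the $\Omega$-part absolutely integrable, and the $\Omega_{e}$-part is controlled by the $H^{s}$-boundedness of $\mathcal{L}^{s}$ established in \eqref{eq:LsBoundedHs}. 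The hardest issue is really this last point—ensuring $E_{0}g\in H^{s}(\mathbb{R}^{n})$ so that $\mathcal{L}^{s}(E_{0}g)$ is meaningful; this is automatic for representatives $g$ with $g|_{\Omega}=0$, and by the quotient description \eqref{eq:Xquotient} one can always choose such a representative for $[g]\in X$, so no generality is lost.
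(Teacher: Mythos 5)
Your core computation is correct and is essentially the paper's own argument in a slightly different packaging: the paper splits the \emph{function}, writing $u_{g}=\chi_{\Omega}u_{g}+\chi_{\Omega_{e}}u_{g}=\chi_{\Omega}u_{g}+E_{0}g$ and applying $\mathcal{L}^{s}$ linearly, with the pointwise evaluation $\left.\mathcal{L}^{s}(\chi_{\Omega}u_{g})\right|_{\Omega_{e}}=\left.(\mathcal{N}_{s}u_{g}-mg)\right|_{\Omega_{e}}$; you split the \emph{domain of integration} in \eqref{eq:PV for L^s} across $\partial\Omega$ and regroup. The two bookkeepings are equivalent and both rest on the same observation that for $x\in\Omega_{e}$ the $\Omega$-part of the integral has no diagonal singularity.

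There is, however, a genuine flaw in your final paragraph, which is precisely the functional-analytic point you identify as ``the hardest issue.'' You claim that $E_{0}g\in H^{s}(\mathbb{R}^{n})$ can be arranged by ``choosing a representative $g$ with $g|_{\Omega}=0$.'' This is circular and, for $s\ge\frac{1}{2}$, false: $E_{0}g=\chi_{\Omega_{e}}g$ depends only on $g|_{\Omega_{e}}$, so it is the same for every representative of $[g]$, and asking for a representative vanishing on $\Omega$ is exactly asking that $\chi_{\Omega_{e}}g\in H^{s}(\mathbb{R}^{n})$ — which generically fails when $s\ge\frac{1}{2}$ because $\chi_{\Omega_{e}}g$ has a jump across $\partial\Omega$ (take $g\equiv1$ near $\partial\Omega$). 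The paper's resolution is different: since $g\in H^{s}(\Omega_{e})$ also lies in $H^{\alpha}(\Omega_{e})$ for some $\alpha\in(-\frac{1}{2},\frac{1}{2})$, and $\chi_{\Omega}$, $1-\chi_{\Omega}$ are pointwise multipliers on $H^{\alpha}(\mathbb{R}^{n})$ for such $\alpha$ (for a Lipschitz $\Omega$), one gets $E_{0}g, \chi_{\Omega}u_{g}\in H^{\alpha}(\mathbb{R}^{n})$ and interprets $\mathcal{L}^{s}(E_{0}g)$ in this lower-regularity scale rather than in $H^{-s}(\mathbb{R}^{n})$. You should replace your last step with this multiplier argument (or an equivalent one); without it the term $\mathcal{L}^{s}(E_{0}g)$ on the right-hand side of \eqref{t17} is not defined for $s\ge\frac{1}{2}$.
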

\begin{proof}
Since $\Omega$ is a Lipschitz domain, from \eqref{t16} we have:
\[
\Lambda_{q}[g]=\left(\mathcal{L}^{s}u_{g}\right)|_{\Omega_{e}}=\left.\left(\mathcal{L}^{s}(\chi_{\Omega}u_{g})+\mathcal{L}^{s}(\chi_{\Omega_{e}}u_{g})\right)\right|_{\Omega_{e}},
\]
as we know if $g\in H^{s}(\Omega_{e})$, then $g\in H^{\alpha}(\Omega_{e})$
for some $\alpha\in(-1/2,1/2)$ and hence $E_{0}g,u_{g}\in H^{\alpha}(\mathbb{R}^{n})$.
Recall also that $\chi_{\Omega}$ and $(1-\chi_{\Omega})$ are pointwise
multipliers on $H^{\alpha}(\mathbb{R}^{n})$ (see \cite{ghosh2016calder}).
Now from the pointwise definition of $\mathcal{L}^{s}$ given in \eqref{eq:PV for L^s},
and the Neumann operator in \eqref{Neumann} it simply follows that
: 
\[
\left(\mathcal{L}^{s}(\chi_{\Omega}u_{g})\right)|_{\Omega_{e}}=\left(\mathcal{N}_{s}u_{g}-mg\right)|_{\Omega_{e}}
\]
where $m\in C^{\infty}(\Omega_{e})$ is given by $m(x)=\int_{\Omega}\mathcal{K}_{s}(x,z)\,dz$. 
\end{proof}
Hence, we have two representation of $\Lambda_{q}[g]$ are given by
\eqref{t16} and \eqref{t17}. 
\begin{rem}
\label{smooth DN map Lemma}Let $\Omega\subseteq\mathbb{R}^{n}$ be
a bounded open set with $C^{\infty}$-smooth boundary. Suppose the
matrix $A(x)$ given in \eqref{eq: Local elliptic operator-1} satisfying
\eqref{eq:ellipticity and symmetry condition}, potential $q(x)$
and the source term $g(x)$ are $C^{\infty}$-smooth functions in
$\mathbb{R}^{n}$, $\Omega$, and $\Omega_{e}$ , respectively. Then
for any $\beta\geq0$ with $s-\dfrac{1}{2}<\beta<\dfrac{1}{2}$, the
DN map is given by 
\[
\Lambda_{q}:H^{s+\beta}(\Omega_{e})\to H^{-s+\beta}(\Omega_{e}),\ \ \Lambda_{q}g=\mathcal{L}^{s}u_{g}|_{\Omega_{e}},
\]
where $u_{g}\in H^{s+\beta}(\mathbb{R}^{n})$ is a solution of $(\mathcal{L}^{s}+q)u=0$
in $\Omega$ with $u=g$ in $\Omega_{e}$. \end{rem}
\begin{proof}
Since $A(x)\in C^{\infty}(\mathbb{R}^{n})$ and $\mathcal{L}^{s}$
is the fractional operator with $C^{\infty}$-smooth coefficients
of order $2s$, it bounds to satisfy the $s$-transmission eigenvalue
condition given in \cite{grubb2015fractional}. Then the proof becomes
analogues to the proof of \cite[Lemma 3.1]{ghosh2016calder} and we
omit here. 
\end{proof}
We end this section by deriving couple of results regarding the integral
identity in our case. 
\begin{lem}
\label{lem:(Integral-identity)-Let}(Integral identity) Let $\Omega\subseteq\mathbb{R}^{n}$
as mentioned above, $s\in(0,1)$ and $q_{1},q_{2}\in L^{\infty}(\Omega)$
satisfy \eqref{eq:eigenvalue condition}. For any $g_{1},g_{2}\in H^{s}(\Omega_{e})$
one has 
\begin{equation}
((\Lambda_{q_{1}}-\Lambda_{q_{2}})[g_{1}],[g_{2}])=((q_{1}-q_{2})r_{\Omega}u_{1},r_{\Omega}u_{2})_{\mathbb{R}^{n}}\label{eq:integral identity}
\end{equation}
where $u_{j}\in H^{s}(\mathbb{R}^{n})$ solves $(\mathcal{L}^{s}+q_{j})u_{j}=0$
in $\Omega$ with $u_{j}|_{\Omega_{e}}=g_{j}$ for $j=1,2$.\end{lem}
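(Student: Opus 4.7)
The plan is to expand both sides of the identity using the definition of $\Lambda_{q_j}$ via the bilinear form $\mathcal{B}_{q_j}$ given in Proposition~\ref{prop:DNmap}, and then exploit the symmetries available to us, namely the symmetry of the bilinear form (which follows from \eqref{eq:LsSymmetric}) and the symmetry of $\Lambda_q$ established in \eqref{eq:adjoint operator}. The upshot will be that the nonlocal part $\langle \mathcal{L}^s u_1, u_2\rangle$ occurs in both $\mathcal{B}_{q_1}(u_1,u_2)$ and $\mathcal{B}_{q_2}(u_1,u_2)$ and cancels, leaving only the potential contribution.

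More concretely, I would first note that by Proposition~\ref{props:wellposedness} the solutions satisfy $u_j - g_j \in \widetilde{H}^s(\Omega)$, so $[u_j] = [g_j]$ in the quotient space $X$. Therefore $u_j$ itself is a legitimate representative of $[g_j]$, and the definition \eqref{eq:equvalent integration by parts} yields
\begin{equation*}
\langle \Lambda_{q_1}[g_1],[g_2]\rangle = \mathcal{B}_{q_1}(u_1,u_2).
\end{equation*}
For the second term, I would invoke the symmetry \eqref{eq:adjoint operator} of the DN map:
\begin{equation*}
\langle \Lambda_{q_2}[g_1],[g_2]\rangle = \langle \Lambda_{q_2}[g_2],[g_1]\rangle = \mathcal{B}_{q_2}(u_2,u_1),
\end{equation*}
where in the last equality I again replace $g_1$ by the equivalent representative $u_1$. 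Then the symmetry of $\mathcal{B}_{q_2}$ rewrites this as $\mathcal{B}_{q_2}(u_1,u_2)$.

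Subtracting the two expressions, the nonlocal bilinear terms $\langle \mathcal{L}^s u_1, u_2\rangle$ appearing in both $\mathcal{B}_{q_1}(u_1,u_2)$ and $\mathcal{B}_{q_2}(u_1,u_2)$ cancel, and what remains is
\begin{equation*}
\langle (\Lambda_{q_1}-\Lambda_{q_2})[g_1],[g_2]\rangle = \int_\Omega (q_1-q_2)\, u_1 u_2\, dx,
\end{equation*}
which is precisely the claimed pairing $((q_1-q_2) r_\Omega u_1, r_\Omega u_2)_{\mathbb{R}^n}$ since $q_1-q_2$ is supported in $\Omega$.

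The only subtle point is the book-keeping with representatives in the quotient space $X = H^s(\mathbb{R}^n)/\widetilde{H}^s(\Omega)$: one must justify that using $u_j$ in place of $g_j$ is legal, which in turn rests on Remark~\ref{rem:uIndependentg} and the well-definedness of $\Lambda_q$ established in Proposition~\ref{prop:DNmap}. Apart from this, the argument is purely algebraic manipulation of the symmetric bilinear form, and I do not anticipate any essential obstacle.
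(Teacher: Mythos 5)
Your argument is correct and coincides with the paper's own proof: both expand $\langle\Lambda_{q_1}[g_1],[g_2]\rangle=\mathcal{B}_{q_1}(u_1,u_2)$, handle the second term via the symmetry \eqref{eq:adjoint operator} of the DN map together with the symmetry of $\mathcal{B}_{q_2}$, and cancel the common nonlocal term $\langle\mathcal{L}^s u_1,u_2\rangle$. Your extra care about choosing $u_j$ as representatives of $[g_j]$ is exactly the justification implicit in the paper's first line.
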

\begin{proof}
By \eqref{eq:adjoint operator}, we have 
\begin{align*}
((\Lambda_{q_{1}}-\Lambda_{q_{2}})[g_{1}],[g_{2}]) & =(\Lambda_{q_{1}}[g_{1}],[g_{2}])-([g_{1}],\Lambda_{q_{2}}[g_{2}])\\
 & =B_{q_{1}}(u_{1},u_{2})-B_{q_{2}}(u_{1},u_{2})\\
 & =((q_{1}-q_{2})r_{\Omega}u_{1},r_{\Omega}u_{2})_{\mathbb{R}^{n}}.
\end{align*}

\end{proof}

\section{Extension Problems for $\mathcal{L}^{s}$\label{Section 4}}

In this section, we introduce an extension problem, which characterize
the nonlocal operator $\mathcal{L}^{s}$. For convenience, we introduce
the following notations.

\subsection*{Notations in $\mathbb{R}^{n+1}$}

We shall always, unless otherwise specified, refer the notation $(x,y)\in\mathbb{R}^{n+1}$
with $x\in\mathbb{R}^{n}$ and $y\in\mathbb{R}$. Let $\mathbb{R}_{+}^{n+1}$
be the (open) upper half space of $\mathbb{R}^{n+1}$, namely, $\mathbb{R}_{+}^{n+1}:=\left\{ (x,y);\,x\in\mathbb{R}^{n},y>0\right\} $
and its boundary $\partial\mathbb{R}_{+}^{n+1}:=\left\{ (x,0);\,x\in\mathbb{R}^{n}\right\} $.
Given any $x_{0}\in\mathbb{R}^{n}$, $(x_{0},y_{0})\in\mathbb{R}^{n+1}$
and $R>0$, we denote the balls 
\begin{align*}
 & B\left(x_{0},R\right):=\left\{ x\in\mathbb{R}^{n}:\mbox{ }|x-x_{0}|<R\right\} \subset\mathbb{R}^{n},\\
 & B^{n+1}\left((x_{0},y_{0}),R\right):=\left\{ (x,y)\in\mathbb{R}^{n+1}:\mbox{ }\sqrt{|x-x_{0}|^{2}+|y-y_{0}|^{2}}<R\right\} ,
\end{align*}
and as $y_{0}=0$, we set 
\begin{align*}
 & B^{n+1}\left(x_{0},R\right):=B^{n+1}\left((x_{0},0),R\right),\\
 & B_{+}^{n+1}\left(x_{0},R\right):=B^{n+1}\left(x_{0},R\right)\cap\{y>0\},\\
 & B^{*}\left(x_{0},R\right):=B^{n+1}\left(x_{0},R\right)\cap\{y=0\}.
\end{align*}

Let $s\in(0,1)$ and $\mathcal{D}$ be a bounded Lipschitz domain
in $\mathbb{R}^{n+1}$. Let $w$ be an arbitrary $A_{2}$ Muchenhoupt
weight function (cf. \cite{fabes1982local,muckenhoupt1972weighted})
and we denote $L^{2}(\mathcal{D},w)$ to be the weighted Sobolev space
containing all functions $U$ which are defined a.e. in $\mathcal{D}$
such that 
\[
\|U\|_{L^{2}(\mathcal{D},w)}:=\left(\int_{\mathcal{D}}w|U|^{2}dxdy\right)^{1/2}<\infty.
\]
Define 
\[
H^{1}(\mathcal{D},w):=\{U\in L^{2}(\mathcal{D},w);\,\nabla_{x,y}U\in L^{2}(\mathcal{D},w)\},
\]
where $\nabla_{x,y}:=(\nabla,\partial_{y})=(\nabla_{x},\partial_{y})$
is the total derivative in $\mathbb{R}^{n+1}$. In this work, the
weight function $w$ might be $y^{1-2s}$, $|y|^{1-2s}$, $y^{2s-1}$
and $|y|^{2s-1}$ and it is known (cf. \cite{kufner1987some}) that
$y^{1-2s}\in A_{2}$ for $s\in(0,1)$. It is easy to see that $L^{2}(\mathcal{D},w)$
and $H^{1}(\mathcal{D},w)$ are Banach spaces with respect to the
norms $\|\cdot\|_{L^{2}(\mathcal{D},w)}$ and 
\[
\|U\|_{H^{1}(\mathcal{D},w)}:=\left(\|U\|_{L^{2}(\mathcal{D},w)}^{2}+\|\nabla_{x,y}U\|_{L^{2}(\mathcal{D},w)}^{2}\right)^{1/2},
\]
respectively. We shall also make use of the weighted Sobolev space
$H_{0}^{1}(\mathcal{D},w)$ which is the closure of $C_{0}^{\infty}(\mathcal{D})$
under the $H^{1}(\mathcal{D},w)$ norm.

Let us consider the following extension problem in $\mathbb{R}^{n+1}$
\begin{equation}
\begin{cases}
-\mathcal{L}_{x}U+\frac{1-2s}{y}U_{y}+U_{yy}=0 & \mbox{ in }\mathbb{R}_{+}^{n+1},\\
U(\cdot,0)=u(x) & \mbox{ on }\partial\mathbb{R}_{+}^{n+1}.
\end{cases}\label{eq:Main Equ}
\end{equation}
The extension problem is related to the nonlocal equation \eqref{eq:nonlocalproblem},
where the nonlocal operator $\mathcal{L}^{s}$ has been regarded as
a Dirichlet-to-Neumann map of the above degenerate local problem \eqref{eq:Main Equ}.
For convenience, we introduce an auxiliary matrix-valued function
$\widetilde{A}:\mathbb{R}^{n}\to\mathbb{R}^{(n+1)\times(n+1)}$ by
\begin{equation}
\widetilde{A}(x)=\left(\begin{array}{cc}
A(x) & 0\\
0 & 1
\end{array}\right).\label{eq:An+1}
\end{equation}
We introduce the following degenerate local operator by 
\begin{equation}
\mathscr{L}_{\widetilde{A}}^{1-2s}=\nabla_{x,y}\cdot(y^{1-2s}\widetilde{A}(x)\nabla_{x,y}).\label{degenerate operator}
\end{equation}
It can be seen that $y^{-1+2s}\mathscr{L}_{\widetilde{A}}^{1-2s}$
is nothing but the above degenerate local operator introduced in (\ref{eq:Main Equ})
as

\[
\mathscr{L}_{\widetilde{A}}^{1-2s}=y^{1-2s}\left\{ \nabla\cdot(A(x)\nabla)+\frac{1-2s}{y}\partial_{y}+\partial_{y}^{2}\right\} .
\]

\subsection{Basic properties for the extension problem}

Let us begin with the following solvability result of the extension
for $\mathcal{L}$, where $\mathcal{L}$ is a second order elliptic
operator $\mathcal{L}=-\nabla\cdot(A(x)\nabla)$. Recall that the
fractional Sobolev space $H^{s}(\mathbb{R}^{n})$ can be realized
as a trace space of the weighted Sobolev space $H^{1}(\mathbb{R}_{+}^{n+1},y^{1-2s})$
for $s\in(0,1)$ (see \cite{tyulenev2014description}), i.e., for
a given $u\in H^{s}(\mathbb{R}^{n})$, there exists $U_{0}(x,y)\in H^{1}(\mathbb{R}_{+}^{n+1},y^{1-2s})$
such that $U_{0}(x,0)=u(x)\in H^{s}(\mathbb{R}^{n})$ with 
\begin{equation}
\|U_{0}\|_{H^{1}(\mathbb{R}_{+}^{n+1},y^{1-2s})}\leq C\|u\|_{H^{s}(\mathbb{R}^{n})}.\label{Trace estimate}
\end{equation}
For given $u\in H^{s}(\mathbb{R}^{n})$ and define $H_{0}^{1}(\mathbb{R}_{+}^{n+1},y^{1-2s}):=\{U\in H^{1}(\mathbb{R}_{+}^{n+1},y^{1-2s}):\quad U=0\mbox{ on }\partial\mathbb{R}_{+}^{n+1}\}$,
then we say $U(x,y)\in H^{1}(\mathbb{R}_{+}^{n+1},y^{1-2s})$ is a
weak solution of the Dirichlet boundary value problem (\ref{eq:Main Equ})
whenever $\mathscr{U}:=U-U_{0}\in H_{0}^{1}(\mathbb{R}_{+}^{n+1},y^{1-2s})$
\begin{align}
 & \int_{\mathbb{R}_{+}^{n+1}}y{}^{1-2s}\widetilde{A}(x)\nabla_{x,y}\mathscr{U}\cdot\nabla_{x,y}\phi dxdy\label{eq:some equation}\\
= & -\int_{\mathbb{R}_{+}^{n+1}}y{}^{1-2s}\widetilde{A}(x)\nabla_{x,y}U_{0}\cdot\nabla_{x,y}\phi dxdy,\nonumber 
\end{align}
 for all $\phi\in C_{c}^{\infty}(\mathbb{R}_{+}^{n+1})$. The solution
$U(x,y)\in H^{1}(\mathbb{R}_{+}^{n+1},y^{1-2s})$ can be also characterized
as a unique minimizer of the Dirichlet functional 
\[
\min_{\Psi\in H^{1}(\mathbb{R}_{+}^{n+1},y^{1-2s})}\left\{ \int_{\mathbb{R}_{+}^{n+1}}y^{1-2s}\widetilde{A}(x)\nabla_{x,y}\Psi\cdot\nabla_{x,y}\Psi dxdy:\quad\Psi(x,0)=u(x)\right\} .
\]
The existence and the uniqueness for the Dirichlet problem with zero
exterior data is given in the Appendix. First, we have the following
uniqueness result.
\begin{lem}
\label{Lem unique extension} (Unique extension) Let $s\in(0,1)$
and let $\widetilde{A}$ be given by \eqref{eq:An+1} with $A(x)$
satisfying \eqref{eq:ellipticity and symmetry condition}. Given any
$u\in H^{s}(\mathbb{R}^{n})$, there exists a unique solution $U\in H^{1}(\mathbb{R}_{+}^{n+1},y^{1-2s})$
of 
\begin{equation}
\begin{cases}
\mathscr{L}_{\widetilde{A}}^{1-2s}U=0 & \mbox{ in }\mathbb{R}_{+}^{n+1},\\
U(\cdot,0)=u & \mbox{ in }\mathbb{R}^{n}.
\end{cases}\label{eq:degenerateBVP}
\end{equation}
\end{lem}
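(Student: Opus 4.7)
The plan is a standard variational argument in the weighted Sobolev space, reducing to a problem with zero trace. Using the trace characterization recalled in \eqref{Trace estimate}, I would first fix some $U_{0}\in H^{1}(\mathbb{R}_{+}^{n+1},y^{1-2s})$ with $U_{0}(\cdot,0)=u$ and look for the solution in the form $U=U_{0}+\mathscr{U}$ with $\mathscr{U}\in H_{0}^{1}(\mathbb{R}_{+}^{n+1},y^{1-2s})$. The problem then becomes: find such $\mathscr{U}$ satisfying \eqref{eq:some equation}, which I view as the Euler--Lagrange equation associated with the symmetric bilinear form
$$a(V,W):=\int_{\mathbb{R}_{+}^{n+1}}y^{1-2s}\,\widetilde{A}(x)\nabla_{x,y}V\cdot\nabla_{x,y}W\,dxdy.$$
By the upper ellipticity bound in \eqref{eq:ellipticity and symmetry condition}, $a$ is bounded on $H^{1}(\mathbb{R}_{+}^{n+1},y^{1-2s})\times H^{1}(\mathbb{R}_{+}^{n+1},y^{1-2s})$, and by the lower ellipticity bound, $a(V,V)\ge\Lambda^{-1}\|\nabla_{x,y}V\|^{2}_{L^{2}(\mathbb{R}_{+}^{n+1},y^{1-2s})}$. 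The right-hand side $\phi\mapsto-a(U_{0},\phi)$ defines a bounded linear functional on $H_{0}^{1}(\mathbb{R}_{+}^{n+1},y^{1-2s})$ thanks to \eqref{Trace estimate}.

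To produce $\mathscr{U}$ I would use the direct method of the calculus of variations on the Dirichlet functional
$$J[\Psi]=\tfrac{1}{2}\int_{\mathbb{R}_{+}^{n+1}}y^{1-2s}\,\widetilde{A}(x)\nabla_{x,y}\Psi\cdot\nabla_{x,y}\Psi\,dxdy$$
over the affine class $\{\Psi\in H^{1}(\mathbb{R}_{+}^{n+1},y^{1-2s}):\Psi(\cdot,0)=u\}$. This class is nonempty since $J[U_{0}]<\infty$ by \eqref{Trace estimate}; $J$ is convex, nonnegative, and weakly lower semicontinuous in $H^{1}(\mathbb{R}_{+}^{n+1},y^{1-2s})$. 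A minimizing sequence has uniformly bounded weighted gradient norm, and weak compactness (using that $y^{1-2s}$ is an $A_{2}$ weight) produces a weak limit $U^{\ast}$ in the affine class; convexity forces $U^{\ast}$ to be a minimizer, and its Euler--Lagrange equation is precisely the weak formulation \eqref{eq:some equation}. Equivalently, one can phrase this as a Lax--Milgram argument on the homogeneous Hilbert space obtained by completing $C_{c}^{\infty}(\mathbb{R}_{+}^{n+1})$ under the weighted gradient seminorm, where $a$ becomes an equivalent inner product.

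The expected main obstacle is that on the unbounded half-space $\mathbb{R}_{+}^{n+1}$ there is no global weighted Poincar\'e inequality controlling $\|\Psi\|_{L^{2}(\mathbb{R}_{+}^{n+1},y^{1-2s})}$ by $\|\nabla_{x,y}\Psi\|_{L^{2}(\mathbb{R}_{+}^{n+1},y^{1-2s})}$ for functions with trace-zero at $y=0$; therefore the coercivity of $a$ only holds at the level of the gradient seminorm, and one cannot simply invoke Lax--Milgram on $H_{0}^{1}(\mathbb{R}_{+}^{n+1},y^{1-2s})$ with its full norm. The standard workaround is either to pass to the homogeneous space as above or to exhaust $\mathbb{R}_{+}^{n+1}$ by bounded Lipschitz subdomains $\mathcal{D}_{k}$, apply the local $A_{2}$-weighted Poincar\'e inequalities of \cite{fabes1982local} to solve the problem on each $\mathcal{D}_{k}$, obtain a uniform energy estimate via $J[U_{0}]$, and pass to the limit. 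The paper signals that the full argument is carried out in the Appendix.

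For uniqueness, if $U_{1},U_{2}$ both solve \eqref{eq:degenerateBVP}, then $W:=U_{1}-U_{2}\in H_{0}^{1}(\mathbb{R}_{+}^{n+1},y^{1-2s})$ and $a(W,\phi)=0$ for every $\phi\in H_{0}^{1}(\mathbb{R}_{+}^{n+1},y^{1-2s})$. Testing with $\phi=W$ and using the lower ellipticity bound gives
$$\Lambda^{-1}\int_{\mathbb{R}_{+}^{n+1}}y^{1-2s}|\nabla_{x,y}W|^{2}\,dxdy\le a(W,W)=0,$$
so $\nabla_{x,y}W=0$ a.e.\ in $\mathbb{R}_{+}^{n+1}$. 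Since $W$ vanishes on $\partial\mathbb{R}_{+}^{n+1}$ in the trace sense, this forces $W\equiv 0$, which yields uniqueness.
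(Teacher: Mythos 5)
Your proposal is correct and follows essentially the same route as the paper: reduce to a zero-trace problem via the trace characterization of $H^{s}(\mathbb{R}^{n})$ as the trace space of $H^{1}(\mathbb{R}_{+}^{n+1},y^{1-2s})$, then solve by the direct method for the Dirichlet energy (the paper cites \cite[Theorem 2.2]{fabes1982local} and carries out the same variational argument in its Appendix), with uniqueness from testing the difference against itself. Your explicit remark about the absence of a global weighted Poincar\'e inequality on the half-space is a point the paper glosses over, and your suggested remedies (homogeneous space or exhaustion by bounded domains) are the standard and adequate fixes.
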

\begin{proof}
It is known from \cite{nekvinda1993characterization,tyulenev2014description}
that $H^{s}(\mathbb{R}^{n})$ can be regarded as the trace space of
$H^{1}(\mathbb{R}_{+}^{n+1},y^{1-2s})$ on $\partial\mathbb{R}_{+}^{n+1}$.
Therefore, one can find a function $V$ in the space $H^{1}(\mathbb{R}_{+}^{n+1},y^{1-2s})$
such that $V(x,0)=u(x)$ for $x\in\mathbb{R}^{n}$. It is then verified
by \cite[Theorem 2.2]{fabes1982local} that there is a unique solution
$U\in H^{1}(\mathbb{R}_{+}^{n+1},y^{1-2s})$ of $\mathscr{L}_{\widetilde{A}}^{1-2s}U=0$
such that $U-V\in H_{0}^{1}(\mathbb{R}_{+}^{n+1},y^{1-2s})$. Thus,
the existence of solution to \eqref{eq:degenerateBVP} has been proven.
The uniqueness is then a simple consequence of \cite[Theorem 2.2]{fabes1982local}. 
\end{proof}
Next, we demonstrate the following stability estimate.
\begin{lem}
(Stability estimate) Let $u$ and $U$ be the same as in Lemma \eqref{Lem unique extension},
then the stability estimate is given by 
\begin{equation}
\|U\|_{H^{1}(\mathbb{R}_{+}^{n+1},y^{1-2s})}\leq C\|u\|_{H^{s}(\mathbb{R}^{n})}.\label{Stability estimate}
\end{equation}
for some $C>0$ independent of $u$ and $U$.\end{lem}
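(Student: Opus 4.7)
The plan is to combine the trace realization of $H^s(\mathbb{R}^n)$ as the boundary-value space of $H^1(\mathbb{R}^{n+1}_+, y^{1-2s})$ with a standard energy estimate applied to the weak formulation \eqref{eq:some equation}. First, invoking the trace characterization recalled just before the lemma, I would pick a lift $U_0 \in H^1(\mathbb{R}^{n+1}_+, y^{1-2s})$ of $u$ with $U_0(\cdot, 0)=u$ and
\[
\|U_0\|_{H^1(\mathbb{R}^{n+1}_+, y^{1-2s})} \leq C\|u\|_{H^s(\mathbb{R}^n)},
\]
which is exactly the content of \eqref{Trace estimate}. Uniqueness from Lemma \ref{Lem unique extension} then allows us to decompose $U = U_0 + \mathscr{U}$ with $\mathscr{U} \in H_0^1(\mathbb{R}^{n+1}_+, y^{1-2s})$ satisfying \eqref{eq:some equation}.

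For the gradient part of the norm, I would test \eqref{eq:some equation} with $\phi = \mathscr{U}$ itself (permissible by density of $C_c^\infty(\mathbb{R}^{n+1}_+)$ in $H_0^1(\mathbb{R}^{n+1}_+, y^{1-2s})$), and apply the lower ellipticity bound of $\widetilde{A}$ on the left together with Cauchy--Schwarz and the upper bound of $\widetilde{A}$ on the right to obtain
\[
\Lambda^{-1}\|\nabla_{x,y}\mathscr{U}\|_{L^2(\mathbb{R}^{n+1}_+, y^{1-2s})}^{2} \leq \Lambda\,\|\nabla_{x,y}U_0\|_{L^2(\mathbb{R}^{n+1}_+, y^{1-2s})}\,\|\nabla_{x,y}\mathscr{U}\|_{L^2(\mathbb{R}^{n+1}_+, y^{1-2s})}.
\]
This yields $\|\nabla_{x,y}\mathscr{U}\|_{L^2(y^{1-2s})} \leq \Lambda^{2}\|\nabla_{x,y}U_0\|_{L^2(y^{1-2s})}$, and combined with the triangle inequality and the trace bound on $U_0$ gives $\|\nabla_{x,y}U\|_{L^2(y^{1-2s})} \leq C\|u\|_{H^s(\mathbb{R}^n)}$. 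Equivalently, one can invoke the fact that $U$ is the unique minimizer of the Dirichlet functional displayed just before Lemma \ref{Lem unique extension} and use $U_0$ as a competitor.

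The main obstacle is controlling the zero-order term $\|U\|_{L^2(\mathbb{R}^{n+1}_+, y^{1-2s})}$, since on the unbounded half-space a naive weighted Poincar\'e inequality of the form $\|v\|_{L^2(y^{1-2s})} \leq C\|\nabla_{x,y} v\|_{L^2(y^{1-2s})}$ is not available for general $v \in H_0^1(\mathbb{R}^{n+1}_+, y^{1-2s})$. To handle this, I would exploit that $\mathscr{U}$ has vanishing trace at $\{y=0\}$ together with the fact that $y^{1-2s}$ is an $A_2$ weight, and invoke a Hardy-type inequality in the $y$-variable adapted to this weight, which allows us to bound $\|\mathscr{U}\|_{L^2(y^{1-2s})}$ in terms of $\|\nabla_{x,y}\mathscr{U}\|_{L^2(y^{1-2s})}$; alternatively one may redo the Lax--Milgram step of Lemma \ref{Lem unique extension} on the homogeneous norm induced by the Dirichlet form, in the spirit of the Fabes--Kenig--Serapioni framework for $A_2$-weighted problems. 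Combining the resulting bound on $\|\mathscr{U}\|_{L^2(y^{1-2s})}$ with $\|U_0\|_{L^2(y^{1-2s})} \leq \|U_0\|_{H^1(y^{1-2s})} \leq C\|u\|_{H^s(\mathbb{R}^n)}$ and the gradient estimate above, and applying the triangle inequality once more, concludes the stability bound $\|U\|_{H^1(\mathbb{R}^{n+1}_+, y^{1-2s})} \leq C\|u\|_{H^s(\mathbb{R}^n)}$.
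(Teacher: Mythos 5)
Your proof follows essentially the same route as the paper's: lift $u$ to $U_0$ via the trace estimate \eqref{Trace estimate}, write $U=U_0+\mathscr{U}$ with $\mathscr{U}$ the zero-trace part solving the inhomogeneous problem with right-hand side $\nabla_{x,y}\cdot G$, $G=-y^{1-2s}\widetilde{A}\nabla_{x,y}U_0$, and close the argument with an energy estimate plus the triangle inequality --- the paper merely outsources that energy estimate to Lemma \ref{Appendix Lemma 1} in the Appendix, whose proof is the same testing-with-$\mathscr{U}$/Dirichlet-minimization argument you carry out inline. Your explicit worry about the zero-order term $\|\mathscr{U}\|_{L^{2}(\mathbb{R}^{n+1}_{+},y^{1-2s})}$ on the unbounded half-space is well placed: the Appendix lemma asserts the full $H^{1}(y^{1-2s})$ bound but its proof likewise only controls the gradient, so on this point you are, if anything, more careful than the source.
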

\begin{proof}
Given $u\in H^{s}(\mathbb{R}^{n})$, there exists $U_{0}(x,y)\in H^{1}(\mathbb{R}_{+}^{n+1},y^{1-2s})$
such that $U_{0}(x,0)=u(x)$. Since $U\in H^{1}(\mathbb{R}_{+}^{n+1},y^{1-2s})$
is a weak solution of (\ref{eq:degenerateBVP}), let $V:=U-U_{0}$,
then $V\in H^{1}(\mathbb{R}_{+}^{n+1},y^{1-2s})$ is a weak solution
of 
\[
\begin{cases}
\nabla\cdot(y^{1-2s}\widetilde{A}\nabla V)=\nabla\cdot G & \mbox{ in }\mathbb{R}_{+}^{n+1},\\
V(x,0)=0 & \mbox{ in }\mathbb{R}^{n},
\end{cases}
\]
where $G:=-y{}^{1-2s}\widetilde{A}(x)\nabla_{x,y}U_{0}$. It is easy
to see that $y^{2s-1}G\in L^{2}(\mathbb{R}_{+}^{n+1},y^{1-2s})$ and
\begin{eqnarray*}
\int_{\mathbb{R}_{+}^{n+1}}y^{1-2s}|y^{2s-1}G|^{2}dxdy & = & \int_{\mathbb{R}_{+}^{n+1}}y^{1-2s}\left|\widetilde{A}\nabla_{x,y}U_{0}\right|^{2}dxdy\\
 & \leq & C\int_{\mathbb{R}_{+}^{n+1}}y^{1-2s}|\nabla_{x,y}U_{0}|^{2}dxdy,
\end{eqnarray*}
for some universal constant $C>0$. Thus, from \eqref{eq:stabilityA}
in Appendix, we know 
\begin{eqnarray*}
\|U\|_{H^{1}(\mathbb{R}_{+}^{n+1},y^{1-2s})} & \leq & C\|y^{-1+2s}G\|_{L^{2}(\mathbb{R}_{+}^{n+1},y^{1-2s})}\\
 & \leq & C\|\mathscr{U}_{0}\|_{H^{1}(\mathbb{R}_{+}^{n+1},y^{1-2s})}\\
 & \leq & C\|u\|_{H^{s}(\mathbb{R}^{n})},
\end{eqnarray*}
for some constant $C>0$ and the last inequality comes the trace estimate
(\ref{Trace estimate}).
\end{proof}
We observe that since $A(x)$ satisfies the hypothesis ($\mathcal{H}$),
from the standard elliptic regularity theory, we get that $U$ is
$C^{\infty}$-smooth in $\mathbb{R}_{+}^{n+1}$. Consequently, by
using the standard weak formulation method, we can obtain that $y^{1-2s}\partial_{y}U$
converges to some function $h\in H^{-s}(\mathbb{R}^{n})$ as $y\to0$
in $H^{-s}(\mathbb{R}^{n})$ as 
\begin{equation}
(h,\phi(x,0))_{H^{-s}(\mathbb{R}^{n})\times H^{s}(\mathbb{R}^{n})}=\int_{\mathbb{R}_{+}^{n+1}}y^{1-2s}\widetilde{A}(x)\nabla_{x,y}U\cdot\nabla_{x,y}\phi dxdy,\label{eq:Nman}
\end{equation}
for all $\phi\in H^{1}(\mathbb{R}_{+}^{n+1},y^{1-2s})$. In other
words, $U\in H^{1}(\mathbb{R}_{+}^{n+1},y^{1-2s})$ is a weak solution
of the Neumann boundary value problem 
\begin{equation}
\begin{cases}
\nabla_{x,y}\cdot(y^{1-2s}\widetilde{A}(x)\nabla_{x,y}U)=0 & \mbox{ in }\mathbb{R}_{+}^{n+1},\\
\lim_{y\to0}y^{1-2s}\partial_{y}U=h & \mbox{ in }\mathbb{R}^{n}\times\{0\}.
\end{cases}\label{Extension and DN map}
\end{equation}
The following proposition characterizes $\lim_{y\to0}y^{1-2s}\partial_{y}U=h$
as $d_{s}h=\mathcal{L}^{s}u$, for some constant $d_{s}$ depending
on $s$, which connects the nonlocal problem and the extension problem.
\begin{prop}
\label{Extension lemma in the weak sense} Given $u\in H^{s}(\mathbb{R}^{n})$,
define 
\begin{equation}
U(x,y):=\int_{\mathbb{R}^{n}}P_{y}^{s}(x,z)u(z)dz,\label{eq:Vxy}
\end{equation}
where $P_{y}^{s}$ is the Poisson kernel given by 
\begin{equation}
P_{y}^{s}(x,z)=\dfrac{y^{2s}}{4^{s}\Gamma(s)}\int_{0}^{\infty}e^{-\frac{y^{2}}{4t}}p_{t}(x,z)\dfrac{dt}{t^{1+s}},\quad x,z\in\mathbb{R}^{n},\,y>0,\label{eq:representation of Poisson kernel}
\end{equation}
with the heat kernel $p_{t}$ introduced in Section \ref{sec:LsHsHeatkernel}.
Then $U\in H^{1}(\mathbb{R}_{+}^{n+1},y^{1-2s})$ and is the weak
solution of \eqref{eq:degenerateBVP} and 
\begin{equation}
\lim_{y\to0^{+}}\dfrac{U(\cdot,y)-U(\cdot,0)}{y^{2s}}=\frac{1}{2s}\lim_{y\to0+}y^{1-2s}\partial_{y}U(\cdot,y)=\frac{\Gamma(-s)}{4^{s}\Gamma(s)}\mathcal{L}^{s}u,\label{Almost finishing limit}
\end{equation}
in $H^{-s}(\mathbb{R}^{n})$.\end{prop}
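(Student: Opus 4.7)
The plan is to exploit the semigroup formulation of the Poisson kernel together with the representation \eqref{eq:1111} of $\mathcal{L}^s$. First I would perform the change of variable $\tau = y^2/(4t)$ in \eqref{eq:representation of Poisson kernel} and apply Fubini to obtain
\[
U(x,y) \;=\; \frac{1}{\Gamma(s)}\int_0^\infty e^{-\tau}\bigl(e^{-(y^2/(4\tau))\mathcal{L}}u\bigr)(x)\,\tau^{s-1}\,d\tau.
\]
From this, $U(\cdot,0)=u$ in $L^2(\mathbb{R}^n)$ is immediate via $\int_0^\infty e^{-\tau}\tau^{s-1}d\tau = \Gamma(s)$ and strong continuity of $\{e^{-t\mathcal{L}}\}$.

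Next, for the interior equation $\mathscr{L}_{\widetilde A}^{1-2s}U = 0$ in $\mathbb{R}_+^{n+1}$, I would pass to the spectral resolution and write $U(x,y) = \phi(y,\mathcal{L})u$ with $\phi(y,\lambda) = \frac{1}{\Gamma(s)}\int_0^\infty e^{-\tau-y^2\lambda/(4\tau)}\tau^{s-1}d\tau$. Differentiating under the integral and integrating by parts in $\tau$ verifies the Bessel-type ODE $\phi''(y,\lambda) + \frac{1-2s}{y}\phi'(y,\lambda) = \lambda\,\phi(y,\lambda)$ for every $\lambda>0$; in spectral form this is exactly the extension equation, and pairing with a test function $\varphi\in C_c^\infty(\mathbb{R}_+^{n+1})$ and applying Fubini lifts the identity to the weak formulation \eqref{eq:some equation}. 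Membership $U\in H^1(\mathbb{R}_+^{n+1},y^{1-2s})$ and the bound \eqref{Stability estimate} follow by substituting $r = y\sqrt\lambda$ in the weighted integrals $\int_0^\infty y^{1-2s}|\phi|^2 dy$ and $\int_0^\infty y^{1-2s}|\partial_y\phi|^2 dy$, which reduce to absolute constants times $\lambda^{-s}$ and $\lambda^s$ respectively; combining with Plancherel and \eqref{eq:LsBoundedHs} bounds $\|U\|_{H^1(\mathbb{R}_+^{n+1},y^{1-2s})}$ by $\|u\|_{H^s(\mathbb{R}^n)}$. Uniqueness (Lemma \ref{Lem unique extension}) then identifies $U$ with the weak solution of \eqref{eq:degenerateBVP}.

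For the boundary asymptotics, the same change of variable gives
\[
U(x,y)-U(x,0) \;=\; \frac{y^{2s}}{4^s\Gamma(s)} \int_0^\infty e^{-y^2/(4t)}\bigl(e^{-t\mathcal{L}}u-u\bigr)(x)\,\frac{dt}{t^{1+s}} \;=:\; y^{2s}\,F(x,y).
\]
Dividing by $y^{2s}$ and sending $y\to 0^+$, a dominated-convergence argument in $H^{-s}(\mathbb{R}^n)$ combined with \eqref{eq:1111} yields
\[
F(\cdot,y)\longrightarrow \frac{1}{4^s\Gamma(s)}\int_0^\infty (e^{-t\mathcal{L}}u-u)\,\frac{dt}{t^{1+s}} \;=\; \frac{\Gamma(-s)}{4^s\Gamma(s)}\,\mathcal{L}^s u.
\]
Since $\partial_y U = 2s\,y^{2s-1}F + y^{2s}\partial_y F$, one has $y^{1-2s}\partial_y U = 2s\,F(\cdot,y) + y\,\partial_y F(\cdot,y)$; an analogous argument shows $y\,\partial_y F\to 0$ in $H^{-s}$, and the two identities in \eqref{Almost finishing limit} follow.

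The main obstacle is legitimizing the limits in $H^{-s}(\mathbb{R}^n)$ uniformly. The key input is splitting the $t$-integral at $t\sim y^2$: on $(0,y^2)$ one uses the gaussian factor $e^{-y^2/(4t)}\le e^{-1/4}$ together with the spectral bound $\|e^{-t\mathcal{L}}u-u\|_{H^{-s}}\lesssim t^s\|u\|_{H^s}$, while on $(y^2,\infty)$ one uses $|e^{-y^2/(4t)}-1|\lesssim y^2/t$ paired with the uniform bound $\|e^{-t\mathcal{L}}u - u\|_{H^{-s}}\lesssim \|u\|_{H^s}$. Both contributions produce an integrable majorant independent of $y$, validating the interchange of limit and integral and completing the proof.
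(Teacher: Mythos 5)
Your route through the subordination formula $U(x,y)=\frac{1}{\Gamma(s)}\int_0^\infty e^{-\tau}\bigl(e^{-(y^2/4\tau)\mathcal{L}}u\bigr)\tau^{s-1}d\tau$ and the spectral ODE is a legitimate, and genuinely different, way to obtain the interior equation; the paper instead simply cites Stinga--Torrea for the PDE and gets $U\in H^{1}(\mathbb{R}_{+}^{n+1},y^{1-2s})$ from the trace characterization together with Lemma \ref{Lem unique extension}. (A minor slip: with $r=y\sqrt{\lambda}$ the weighted integral $\int_0^\infty y^{1-2s}|\phi|^2dy$ scales like $\lambda^{s-1}$, not $\lambda^{-s}$.) The real problem is in the boundary asymptotics. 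Your dominated-convergence scheme rests on the bound $\|e^{-t\mathcal{L}}u-u\|_{H^{-s}}\lesssim t^{s}\|u\|_{H^{s}}$ near $t=0$, which after division by $t^{1+s}$ gives the majorant $t^{-1}$ --- not integrable at $0$. This is not a removable technicality: for $u$ merely in $H^{s}(\mathbb{R}^{n})$ the Bochner integral $\int_0^\infty(e^{-t\mathcal{L}}u-u)\,t^{-1-s}dt$ need not converge absolutely in $H^{-s}$, which is exactly why the paper defines $\mathcal{L}^{s}$ on $H^{s}$ through the symmetrized bilinear form \eqref{eq:integral represent for nonlocal} rather than through \eqref{eq:1111}; the latter formula is only asserted on $\mathrm{Dom}(\mathcal{L}^{s})$. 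Extending \eqref{Almost finishing limit} from $\mathrm{Dom}(\mathcal{L}^{s})$ (where it is Stinga--Torrea's theorem) to all of $H^{s}$ is precisely the content of this proposition (see the Remark following it), so you cannot identify $\lim_{y\to0}F(\cdot,y)$ with $\frac{\Gamma(-s)}{4^{s}\Gamma(s)}\mathcal{L}^{s}u$ by invoking \eqref{eq:1111}.

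To close the gap you need one of the two devices the paper uses. For the difference quotient, write $U(x,y)-u(x)=\int P_{y}^{s}(x,z)(u(z)-u(x))dz$ using $\int_{\mathbb{R}^{n}}P_{y}^{s}(x,z)dz=1$, pair with $v\in H^{s}(\mathbb{R}^{n})$, and symmetrize in $(x,z)$: the limit is then the quadratic form $\frac{1}{2}\int_{\mathbb{R}^{n}\times\mathbb{R}^{n}}(u(x)-u(z))(v(x)-v(z))\mathcal{K}_{s}(x,z)dxdz$, which is absolutely convergent by the two-sided kernel bound \eqref{eq:pointwise estimate for kernel K}. For the normal derivative, approximate $u$ by $u_{k}\in\mathrm{Dom}(\mathcal{L}^{s})\cap H^{s}(\mathbb{R}^{n})$, apply Stinga--Torrea to each $u_{k}$, and pass to the limit using the stability estimate \eqref{Stability estimate} and the weak formulation \eqref{eq:Nman}. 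Without one of these, the step ``$F(\cdot,y)\to\frac{\Gamma(-s)}{4^{s}\Gamma(s)}\mathcal{L}^{s}u$'' (and likewise ``$y\,\partial_{y}F\to0$'') is unjustified.
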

\begin{rem}
Note that Stinga and Torrea \cite{stinga2010extension} proved the
equality \eqref{Almost finishing limit} for $u\in\mathrm{Dom}(\mathcal{L}^{s})$.
Here we extend such results for all $u\in H^{s}(\mathbb{R}^{n})$. \end{rem}
\begin{proof}
From \cite[Theorem 2.1]{stinga2010extension}, we know that 
\[
U(x,y):=\int_{\mathbb{R}^{n}}P_{y}^{s}(x,z)u(z)dz
\]
solves the equation \eqref{eq:degenerateBVP}. From Lemma \ref{Lem unique extension},
we know that $U\in H^{1}(\mathbb{R}_{+}^{n+1},y^{1-2s})$ due to $u\in H^{s}(\mathbb{R}^{n})$.
It remains to demonstrate that \eqref{Almost finishing limit} for
$u\in H^{s}(\mathbb{R}^{n})$. 

Firstly, we prove $\lim_{y\to0^{+}}\dfrac{U(\cdot,y)-U(\cdot,0)}{y^{2s}}=\dfrac{\Gamma(-s)}{4^{s}\Gamma(s)}\mathcal{L}^{s}u$,
for any $v\in H^{s}(\mathbb{R}^{n})$. By using \eqref{eq:Py>0} in
Appendix, we can deduce 
\[
\begin{split} & \left\langle \lim_{y\to0}\dfrac{U(\cdot,y)-U(\cdot,0)}{y^{2s}},v\right\rangle _{H^{-s}(\mathbb{R}^{n})\times H^{s}(\mathbb{R}^{n})}\\
= & \int_{\mathbb{R}^{n}}\lim_{y\to0}\left[\frac{\int_{\mathbb{R}^{n}}P_{y}^{s}(x,z)u(z)dz-u(x)}{y^{2s}}\right]v(x)dx\\
= & \int_{\mathbb{R}^{n}}\lim_{y\to0}\frac{\int_{\mathbb{R}^{n}}P_{y}^{s}(x,z)\left(u(z)-u(x)\right)v(x)dz}{y^{2s}}dx\\
= & \dfrac{1}{4^{s}\Gamma(s)}\int_{\mathbb{R}^{n}}\lim_{y\to0^{+}}\lim_{\epsilon\to0^{+}}\int_{|z-x|>\epsilon}\int_{0}^{\infty}e^{-\frac{y^{2}}{4t}}p_{t}(x,z)\left(u(z)-u(x)\right)v(x)\dfrac{dt}{t^{1+s}}dzdx\\
= & \frac{1}{4^{s}\Gamma(s)}\int_{\mathbb{R}^{n}}\lim_{\epsilon\to0^{+}}\int_{|z-x|>\epsilon}\left(\int_{0}^{\infty}p_{t}(x,z)\dfrac{dt}{t^{1+s}}\right)\left(u(z)-u(x)\right)v(x)dzdx\\
= & \frac{\Gamma(-s)}{4^{s}\Gamma(s)}\int_{\mathbb{R}^{n}}\lim_{\epsilon\to0^{+}}\int_{|z-x|>\epsilon}\mathcal{K}_{s}(x,z)\left(u(z)-u(x)\right)v(x)dzdx\\
= & \frac{\Gamma(-s)}{4^{s}\Gamma(s)}\left\langle \mathcal{L}^{s}u,v\right\rangle _{H^{-s}(\mathbb{R}^{n})\times H^{s}(\mathbb{R}^{n})}.
\end{split}
\]

Secondly, we prove $\frac{1}{2s}\lim_{y\to0+}y^{1-2s}\partial_{y}U(\cdot,y)=\dfrac{\Gamma(-s)}{4^{s}\Gamma(s)}\mathcal{L}^{s}u$
by utilizing the density argument. It is known that for $u\in\mathrm{Dom}(\mathcal{L}^{s})$,
then \eqref{Almost finishing limit} holds in $L^{2}(\mathbb{R}^{n})$.
Consider a sequence $\{u_{k}\}_{k\in\mathbb{N}}\subseteq\mathrm{Dom}(\mathcal{L}^{s})\cap H^{s}((\mathbb{R}^{n})$
such that $u_{k}\to u$ in $H^{s}(\mathbb{R}^{n})$ as $k\to\infty$.
Let $U_{k}\in H^{1}(\mathbb{R}_{+}^{n+1},y^{1-2s})$ be the solution
to \eqref{eq:Main Equ} with the boundary data $u_{k}$ for each $k\in\mathbb{N}$.
Recall from \eqref{eq:DomL^s subset DomL} and \eqref{eq:DomL} that
$C_{0}^{\infty}(\mathbb{R}^{n})\subseteq\mathrm{Dom}(\mathcal{L}^{s})$.
Thus by \cite[Theorem 1.1]{stinga2010extension} and Lemma \ref{Lem unique extension},
$U_{k}(x,y)$ can be uniquely represented by 
\begin{equation}
U_{k}(x,y)=\int_{\mathbb{R}^{n}}P_{y}^{s}(x,z)u_{k}(z)dz,\mbox{ for }x\in\mathbb{R}^{n}\mbox{ and }y\in\mathbb{R}_{+}.\label{eq:Vkxy}
\end{equation}
Moreover, the following relation 
\begin{equation}
\frac{1}{2s}\lim_{y\to0+}y^{1-2s}\partial_{y}U_{k}(\cdot,y)=\frac{\Gamma(-s)}{4^{s}\Gamma(s)}\mathcal{L}^{s}u_{k}
\end{equation}
holds in $L^{2}(\mathbb{R}^{n})$ by \cite[Theorem 1.1]{stinga2010extension}
again. Following that, from \eqref{eq:Nman} or \eqref{Extension and DN map},
we conclude for being $U\in H^{1}(\mathbb{R}_{+}^{n+1},y^{1-2s})$,
$\frac{1}{2s}\lim_{y\to0+}y^{1-2s}\partial_{y}U(x,y)=h$ exists in
$H^{-s}(\mathbb{R}^{n})$. For convenience, we set $h_{k}:=\frac{\Gamma(-s)}{4^{s}\Gamma(s)}\mathcal{L}^{s}u_{k}$.
Note that $U_{k}-U$ solves the equation $\mathscr{L}_{\widetilde{A}}^{1-2s}(U_{k}-U)=0$
in $\mathbb{R}_{+}^{n+1}$ with $(U_{k}-U)(x,0)=(u_{k}-u)(x)$, by
the stability estimate (\ref{Stability estimate}), we get 
\[
\|U_{k}-U\|_{H^{1}(\mathbb{R}_{+}^{n+1},y^{1-2s})}\leq C\|u_{k}-u\|_{H^{s}(\mathbb{R}^{n})},
\]
for some constant $C>0$ independent of $U_{k}$, $U$, $u_{k}$ and
$u$. Hence, $U_{k}\to U$ in $H^{1}(\mathbb{R}_{+}^{n+1},y^{1-2s})$
due to $u_{k}\to u$ in $H^{s}(\mathbb{R}^{n})$ as $k\to\infty$.
On the other hand, by using the weak formulation \eqref{eq:Nman},
for any $\phi\in H^{1}(\mathbb{R}_{+}^{n+1},y^{1-2s})$ (recall that
$\phi(x,0)\in H^{s}(\mathbb{R}^{n})$ by the trace characterization),
$U_{k}-U$ satisfies 
\[
\int_{\mathbb{R}_{+}^{n+1}}y^{1-2s}\widetilde{A}(x)\nabla_{x,y}(U_{k}-U)\cdot\nabla_{x,y}\phi dxdy=\left((h_{k}-h),\phi(x,0)\right)_{H^{-s}(\mathbb{R}^{n})\times H^{s}(\mathbb{R}^{n})}.
\]
From $U_{k}\to U$ in $H^{1}(\mathbb{R}_{+}^{n+1},y^{1-2s})$ as $k\to\infty$,
we conclude that $h_{k}\to h$ in $H^{-s}(\mathbb{R}^{n})$ as $k\to\infty$.
Finally, by the integral representation (\ref{eq:integral represent for nonlocal})
for $\mathcal{L}^{s}$ and $u_{k}\to u$ in $H^{s}(\mathbb{R}^{n})$
as $k\to\infty$, we can derive that 
\begin{eqnarray*}
 &  & \left(h_{k},\phi(x,0)\right)_{H^{-s}(\mathbb{R}^{n})\times H^{s}(\mathbb{R}^{n})}\\
 & = & \frac{\Gamma(-s)}{4^{s}\Gamma(s)}\left(\mathcal{L}^{s}u_{k},\phi(x,0)\right)_{H^{-s}(\mathbb{R}^{n})\times H^{s}(\mathbb{R}^{n})}\\
 & = & \frac{1}{2}\frac{\Gamma(-s)}{4^{s}\Gamma(s)}\int_{\mathbb{R}^{n}\times\mathbb{R}^{n}}(u_{k}(x)-u_{k}(z))(\phi(x,0)-\phi(z,0))\mathcal{K}_{s}(x,z)dxdz\\
 & \to & \frac{1}{2}\frac{\Gamma(-s)}{4^{s}\Gamma(s)}\int_{\mathbb{R}^{n}\times\mathbb{R}^{n}}(u(x)-u(z))(\phi(x,0)-\phi(z,0))\mathcal{K}_{s}(x,z)dxdz\\
 & = & \frac{\Gamma(-s)}{4^{s}\Gamma(s)}\left(\mathcal{L}^{s}u,\phi(x,0)\right)_{H^{-s}(\mathbb{R}^{n})\times H^{s}(\mathbb{R}^{n})},
\end{eqnarray*}
as $k\to\infty$. By using the uniqueness limit of $h_{k}$, then
we can conclude that $h=\frac{\Gamma(-s)}{4^{s}\Gamma(s)}\mathcal{L}^{s}u$
in $H^{-s}(\mathbb{R}^{n})$. Therefore we have verified \eqref{Almost finishing limit}
and thus completes the proof. 
\end{proof}
Next, we recall the well-known reflection extension for the extension
problem.

\subsection{Even reflection extension and its related regularity properties}

Similar to the fractional Laplacian case, we also have the following
reflection property for fractional variable operators. Let us consider
$B_{+}^{n+1}(x_{0},R)\subset\mathbb{R}_{+}^{n+1}$ , where $\lim_{y\to0}y^{1-2s}\partial_{y}U=0$
in $B(x_{0},R)\subset\Omega$. Now by using the even reflection, we
define 
\begin{equation}
\widetilde{U}(x,y):=\begin{cases}
U(x,y) & \mbox{ if }y\geq0,\\
U(x,-y) & \mbox{ if }y<0,
\end{cases}\label{eq:even reflection}
\end{equation}
where $U(x,y)$ solves (\ref{eq:Main Equ}). Then $\widetilde{U}(x,y)$
is a solution of the following problem 
\begin{equation}
\nabla_{x,y}\cdot(|y|^{1-2s}\widetilde{A}(x)\nabla_{x,y}\widetilde{U})=0\mbox{ in }B^{n+1}(x_{0},R).\label{eq:reflection extension}
\end{equation}
In general, for 
\[
\nabla_{x,y}\cdot(|y|^{1-2s}\widetilde{A}(x)\nabla_{x,y}V)=\nabla_{x,y}\cdot G\mbox{ in }B^{n+1}(x_{0},R),
\]
where $G$ is a vector-valued function satisfies $y^{-1+2s}|G|\in L^{2}(B^{n+1}(x_{0},R),|y|^{1-2s})$,
then we say $V$ to be a weak solution of the above equation if 
\[
\int_{B^{n+1}(x_{0},R)}|y|^{1-2s}\widetilde{A}(x)\nabla_{x,y}V\cdot\nabla_{x,y}\phi dxdy=\int_{B^{n+1}(x_{0},R)}G\cdot\nabla_{x,y}\phi dxdy,
\]
for all $\phi\in C_{c}^{\infty}(B^{n+1}(x_{0},R))$.

We have the following regularity results.
\begin{prop}
\label{Prop Holder regularity for degenerate}(a) (Solvability) Let
$D\subset\mathbb{R}^{n+1}$ be a bounded domain with $C^{\infty}$-smooth
boundary and $G$ is a vector-valued function satisfies $|y|^{-1+2s}|G|\in L^{2}(D,|y|^{1-2s})$.
Let $g\in H^{1}(D,|y|^{1-2s})$, then there is a unique solution $V\in H^{1}(D,|y|^{1-2s})$
of 
\begin{equation}
\nabla_{x,y}\cdot(|y|^{1-2s}\widetilde{A}(x)\nabla_{x,y}V)=\nabla_{x,y}\cdot G\mbox{ in }D,\label{eq:degenerate elliptic problem}
\end{equation}
with $V-g\in H_{0}^{1}(D,|y|^{1-2s})$. 

(b) (Interior Hölder's regularity) Let $D\subset\mathbb{R}^{n+1}$
be a bounded domain with $C^{\infty}$-smooth boundary and let $V$
be a weak solution to (\ref{eq:degenerate elliptic problem}) where
$h$ is a vector-valued function satisfies $y^{-1+2s}|G|\in L^{2(n+1)}(D,|y|^{1-2s})$.
Then $V\in C^{0,\beta}(D')$ for some $\beta\in(0,1)$ depending on
$n$ and $s$, where $D'\Subset D$ is an arbitrary open set.

(c) (Higher regularity in the $x$-direction) Let $D\subset\mathbb{R}^{n+1}$
be a bounded domain with $C^{\infty}$-smooth boundary and let $V\in H^{1}(D,|y|^{1-2s})$
be a weak solution of 
\begin{equation}
\nabla_{x,y}\cdot(|y|^{1-2s}\widetilde{A}(x)\nabla_{x,y}V)=0\mbox{ in }D.\label{eq:0-source equation}
\end{equation}
Then for each fixed $y=y_{0}$ with $(x,y_{0})\in D'$, we have $V(x,y_{0})\in C^{\infty}(D'\cap\{y=y_{0}\})$
in any open subset $D'\Subset D$, whenever the matrix $A(x)$ satisfies
the hypothesis ($\mathcal{H}$). \end{prop}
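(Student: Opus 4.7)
Part (a) is a direct Lax--Milgram argument in the weighted Hilbert space $H^1_0(D, |y|^{1-2s})$. Since $|y|^{1-2s} \in A_2$ for $s \in (0,1)$, the weighted Poincaré inequality on the bounded Lipschitz domain $D$ (Fabes--Kenig--Serapioni) yields coercivity of the bilinear form
\[
a(W, \phi) := \int_D |y|^{1-2s}\, \widetilde{A}(x)\,\nabla_{x,y} W \cdot \nabla_{x,y}\phi\, dx\,dy
\]
on $H^1_0(D, |y|^{1-2s})$, while ellipticity of $\widetilde{A}$ and Cauchy--Schwarz give boundedness. Reducing to zero boundary data via $V = g + W$, the contributions of $g$ and $G$ produce a continuous linear functional (using $|y|^{-1+2s}|G| \in L^2(D, |y|^{1-2s})$), so Lax--Milgram delivers existence and uniqueness.

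For (b) I would invoke the De Giorgi--Nash--Moser theory for degenerate elliptic equations with $A_2$ weights of Fabes--Kenig--Serapioni, which supplies Harnack and interior Hölder estimates for weak solutions of the homogeneous equation with exponent $\beta \in (0,1)$ depending only on $n$, $s$, and $\Lambda$. The source $\nabla_{x,y}\cdot G$ is absorbed into Moser iteration via a Stampacchia-type truncation argument; the hypothesis $|y|^{-1+2s}|G| \in L^{2(n+1)}(D, |y|^{1-2s})$ comfortably exceeds the scaling-critical exponent for the effective dimension $n+1$, so the iteration closes on any $D' \Subset D$.

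The main work lies in (c). The key structural fact to exploit is that $\widetilde{A}$ depends only on $x$ (smoothly, by hypothesis $(\mathcal{H})$) while the weight $|y|^{1-2s}$ depends only on $y$, so tangential differentiation commutes with the $y$-part of the operator. My plan is to use tangential difference quotients $D_i^h V := h^{-1}\bigl(V(\cdot + h e_i, \cdot) - V\bigr)$ together with a Caccioppoli-type estimate obtained by testing the equation for $D_i^h V$ against $\eta^2 D_i^h V$ for a cutoff $\eta$, yielding
\[
\|D_i^h V\|_{H^1(D'', |y|^{1-2s})} \le C\, \|V\|_{H^1(D, |y|^{1-2s})}
\]
uniformly in $h$ on intermediate domains $D'' \Subset D$, with $C$ depending on $\|\nabla_x \widetilde{A}\|_{L^\infty}$. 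Passing $h \to 0$ gives $\partial_{x_i} V \in H^1_{\mathrm{loc}}(D, |y|^{1-2s})$ solving the differentiated equation
\[
\nabla_{x,y}\cdot\bigl(|y|^{1-2s}\widetilde{A}(x)\nabla_{x,y}\partial_{x_i} V\bigr) = -\nabla_{x,y}\cdot\bigl(|y|^{1-2s}(\partial_{x_i}\widetilde{A})(x)\nabla_{x,y}V\bigr).
\]
Iterating this procedure provides $\partial_x^\alpha V \in H^1_{\mathrm{loc}}(D, |y|^{1-2s})$ for every multi-index $\alpha$ in the $x$-variables, and invoking (b) on the equation satisfied by $\partial_x^\alpha V$ upgrades each tangential derivative to a Hölder continuous function of $(x,y)$, which in particular makes $x \mapsto V(x, y_0)$ smooth on every slice $\{y = y_0\}$.

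The principal obstacle is the final bootstrap in (c): applying (b) to $\partial_x^\alpha V$ requires the source $G_\alpha$ built from lower-order $x$-derivatives of $V$ to satisfy $|y|^{-1+2s}|G_\alpha| \in L^{2(n+1)}$, while the difference-quotient scheme only produces $L^2$-weighted control of $\nabla_{x,y}\partial_x^\beta V$. Closing this gap will likely require an $L^p$-variant of the Fabes--Kenig--Serapioni estimates (Moser iteration raising integrability stepwise to all $p < \infty$), or alternatively a Campanato/Morrey scheme for oscillation decay of the tangential derivatives on concentric half-balls that avoids pointwise integrability of the source altogether.
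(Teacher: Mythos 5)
Your parts (a) and (b) follow the paper exactly: both are quoted from the Fabes--Kenig--Serapioni theory for degenerate equations with $A_2$ weights (Lax--Milgram with the weighted Poincar\'e inequality for (a), their interior H\"older estimate for (b)). Your difference-quotient scheme in (c), including the Caccioppoli estimate uniform in $h$ and the identification of the commutator source $-|y|^{1-2s}(\partial_{x_i}\widetilde{A})\nabla_{x,y}V$, is also the paper's argument, and the iteration giving $\partial_x^\alpha V\in H^1_{\mathrm{loc}}(D,|y|^{1-2s})$ for every multi-index $\alpha$ in $x$ is correct; note that each step of this iteration only needs part (a), since the commutator source $G_\alpha$ automatically satisfies $|y|^{-1+2s}|G_\alpha|\in L^2(D',|y|^{1-2s})$ once the previous derivative is in $H^1(D',|y|^{1-2s})$.

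The genuine gap is in your final bootstrap, and you have correctly diagnosed it yourself: trying to upgrade each $\partial_x^\alpha V$ to a H\"older continuous function of $(x,y)$ via part (b) requires $|y|^{-1+2s}|G_\alpha|\in L^{2(n+1)}$, which the $L^2$-based difference-quotient scheme does not provide, and neither of your proposed repairs is carried out. The point is that this entire step is unnecessary, because the conclusion of (c) concerns only the restriction of $V$ to a single slice $\{y=y_0\}$, not joint regularity in $(x,y)$. For $y_0=0$, use the trace theorem for the weighted space: the trace of an $H^1(\cdot,|y|^{1-2s})$ function on $\{y=0\}$ lies in $H^s$ of the slice, so $\partial_x^\alpha V(\cdot,0)\in H^s$ for every $\alpha$, i.e.\ $V(\cdot,0)\in H^{m+s}$ for every $m$, and the ordinary fractional Sobolev embedding in $\mathbb{R}^n$ then gives $V(\cdot,0)\in C^\infty(D'\cap\{y=0\})$. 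For $y_0\neq0$, the weight $|y|^{1-2s}$ is bounded above and below on a neighbourhood of the slice, so \eqref{eq:0-source equation} is there a uniformly elliptic equation with $C^\infty$ coefficients and standard interior elliptic regularity gives smoothness in $x$. Replacing your last paragraph by this two-case argument closes the proof without any $L^p$ upgrade of the sources.
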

\begin{proof}
The proof of (a) has been established in \cite{fabes1982local} for
$|y|^{1-2s}$ being an $A_{2}$ function and the proof of (b) is a
direct consequence of \cite[Theorem 2.3.12]{fabes1982local}. We move
into showing (c). Set $\Delta_{x_{i}}^{h}$ to be the classical differential
quotient operator, which means 
\[
\Delta_{x_{i}}^{h}V(x,y):=\dfrac{V(x+he_{i},y)-V(x,y)}{h}\mbox{ for any }i=1,2,\cdots,n,
\]
where we have fixed $y>0$. From straightforward calculation, if $V(x)$
solves \eqref{eq:0-source equation}, we get $\Delta_{x_{i}}^{h}V(x)$
solves 
\begin{equation}
\nabla_{x,y}\cdot(|y|^{1-2s}\widetilde{A}(x+he_{i})\nabla_{x,y}(\Delta_{x_{i}}^{h}V))=\nabla_{x,y}\cdot G\mbox{ in }D_{h},\label{eq:iteration 1}
\end{equation}
where the function $G=-|y|^{1-2s}(\Delta_{x_{i}}^{h}\widetilde{A})\nabla_{x,y}V(x,y)$
and $D_{h}\Subset D$ is an arbitrary subset such that the Hausdorff
distance between $D$ and $D_{h}$ greater than $h$. Note that in
the right hand side of (\ref{eq:iteration 1}) satisfies the condition
$|y|^{-1+2s}G\in L^{2}(D_{h},|y|^{1-2s})$ since $V\in H^{1}(D,|y|^{1-2s})$.
Hence, from (a) and the standard cutoff techniques, we know that $\Delta_{x_{i}}^{h}V\in H^{1}(D_{h},|y|^{1-2s})$
and 
\begin{equation}
\|\Delta_{x_{i}}^{h}V\|_{H^{1}(D',|y|^{1-2s})}\leq K<\infty,\label{Differential quotient 1}
\end{equation}
for any subset $D'\Subset D_{h}$ and it is easy to see that the constant
$K>0$ independent of $h$ since $A(x)$ satisfies the hypothesis
($\mathcal{H}$) such that $\left|\Delta_{x_{i}}^{h}\widetilde{A}(x)\right|\leq\|\partial_{x_{i}}\widetilde{A}\|_{L^{\infty}(D)}\leq C<\infty$.
Recall that $H^{1}(D,|y|^{1-2s})$ is a reflexive Banach space, by
using the same argument as in \cite[Lemma 7.24]{gilbarg2015elliptic},
then $\partial_{x_{i}}V\in H^{1}(D',|y|^{1-2s})$ for $i=1,2,\cdots,n$
with $\|\nabla_{x,y}(\partial_{x_{i}}V)\|_{L^{2}(D',|y|^{1-2s})}\leq K<\infty$,
where $K>0$ is the same constant as in \eqref{Differential quotient 1}.
Continue this process, we can apply the differential quotient with
respect to the $x$-direction for any order, then one can derive that
$V\in H_{x}^{\infty}(D',|y|^{1-2s})$, where 
\begin{equation}
H_{x}^{\infty}(D',|y|^{1-2s}):=\left\{ U\in H^{1}(D',|y|^{1-2s}):\ \partial_{x}^{\alpha}U\in H^{1}(D',|y|^{1-2s})\right\} ,\label{x-direction weighted space}
\end{equation}
for any multi-index $\alpha\in(\mathbb{N}\cup\{0\})^{n}$. Now, since
$\partial_{x}^{\alpha}V(x,y)\in H^{1}(D',|y|^{1-2s})$, by using the
trace theorem for the weighted Sobolev space again, we have $\partial_{x}^{\alpha}V(x,0)\in H^{s}(D'\cap\{y=0\})$
for any $\alpha\in\mathbb{N}^{n}$, or $V(x,0)\in H^{m+s}(D'\cap\{y=0\})$
for any $m\in\mathbb{N}\cup\{0\}$. Now, apply the fractional Sobolev
embedding theorem (see \cite{di2012hitchhiks} for instance), we derive
$V(x,0)\in C^{\infty}(D'\cap\{y=0\})$. For each fixed $y=y_{0}\neq0$,
the equation \eqref{eq:0-source equation} can be regarded as a standard
second order elliptic equation with $C^{\infty}$-smooth coefficients,
by the standard elliptic theory it is easy to see  that $V(x,y_{0})$
is $C^{\infty}$-smooth in $D'\cap\{y=y_{0}\}$ with respect to $x$.
This finishes the proof.
\end{proof}
In the end of this section, we introduce the conjugate equation, which
is associated to the degenerate operator $\mathscr{L}_{\widetilde{A}}^{1-2s}$
given by (\ref{degenerate operator}).

\subsection{Conjugate equation and odd reflection}

As in \cite[Section 2]{caffarelli2007extension} and \cite[Section 2]{stinga2010extension},
it is known that if $U\in H^{1}(\mathbb{R}_{+}^{n+1},y^{1-2s})$ is
a weak solution to $\mathscr{L}_{\widetilde{A}}^{1-2s}U=0$ in $\mathbb{R}_{+}^{n+1}$
then the function 
\[
W(x,y):=y^{1-2s}\partial_{y}U(x,y)
\]
is a solution to the conjugate equation 
\begin{equation}
\mathscr{L}_{\widetilde{A}}^{-1+2s}W=-\mathcal{L}_{x}W+\frac{1-2s}{y}W_{y}+W_{yy}=0,\mbox{ in }\mathbb{R}_{+}^{n+1}.\label{eq:conjugate equation}
\end{equation}
If we assume that $W(x,0)=0$ for $x\in B(x_{0},R)\subset\Omega$
and use the odd reflection, we define 
\begin{equation}
\widetilde{W}(x,y):=\begin{cases}
W(x,y), & \mbox{ if }y\geq0,\\
-W(x,-y), & \mbox{ if }y<0.
\end{cases}\label{eq:odd reflection}
\end{equation}
Then we will prove that that $\widetilde{W}\in H^{1}(|y|^{-1+2s},B^{n+1}(x_{0},R))$
is a weak solution of 
\[
\nabla_{x,y}\cdot(|y|^{-1+2s}\widetilde{A}(x)\nabla_{x,y}\widetilde{W})=0\mbox{ in }B^{n+1}(x_{0},R).
\]
By using Proposition \ref{Prop Holder regularity for degenerate},
we say $\widetilde{W}\in C^{0,\beta}(B^{n+1}(x_{0},R))$ for some
$\beta\in(0,1)$ depending on $n$ and $1-2s$.
\begin{lem}
(The Conjugate Equation)\label{lem:The Conjugate Equation} Let $U\in H^{1}(\mathbb{R}_{+}^{n+1},y^{1-2s})$
be a weak solution of $\mathscr{L}_{\widetilde{A}}^{1-2s}U=0$ in
$\mathbb{R}_{+}^{n+1}$ and $\lim_{y\to0}y^{1-2s}\partial_{y}U=0$
in $B^{*}(x_{0},R)$. Then for any $r<R$, the function $W=y^{1-2s}\partial_{y}U\in H^{1}(B_{+}^{n+1}(x_{0},r),y^{2s-1})$
solves the conjugate equation $\mathscr{L}_{\widetilde{A}}^{2s-1}W=0$
weakly in $B_{+}^{n+1}(x_{0},r)$ with $W(x,0)=0$ for $x\in B(x_{0},r)$.\end{lem}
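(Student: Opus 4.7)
The plan is to verify three things in turn: that $W\in H^{1}(B_{+}^{n+1}(x_{0},r),y^{2s-1})$, that $W(x,0)=0$ on $B^{*}(x_{0},r)$, and that $\mathscr{L}_{\widetilde{A}}^{2s-1}W=0$ weakly in $B_{+}^{n+1}(x_{0},r)$. The organizing observation is that $\mathscr{L}_{\widetilde{A}}^{1-2s}U=0$, written in expanded form as
\[
\nabla_{x}\cdot\bigl(y^{1-2s}A(x)\nabla_{x}U\bigr)+\partial_{y}(y^{1-2s}\partial_{y}U)=0,
\]
yields, upon setting $W=y^{1-2s}\partial_{y}U$, the pointwise identity $\partial_{y}W=y^{1-2s}\mathcal{L}_{x}U$. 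Together with the trivial $\nabla_{x}W=y^{1-2s}\nabla_{x}U_{y}$ (the weight depending only on $y$), the new weight $y^{2s-1}$ will cancel each factor of $y^{1-2s}$ arising in the conjugate equation.

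The first preparatory step is to produce enough $x$-regularity for $U$. Because the hypothesis gives $\lim_{y\to 0}y^{1-2s}\partial_{y}U=0$ on $B^{*}(x_{0},R)$, I extend $U$ across $\{y=0\}$ by the even reflection \eqref{eq:even reflection}, obtaining $\widetilde{U}\in H^{1}(B^{n+1}(x_{0},R),|y|^{1-2s})$ which solves \eqref{eq:reflection extension} weakly. Proposition~\ref{Prop Holder regularity for degenerate}(c), together with hypothesis~$(\mathcal{H})$ on $A(x)$, then yields $\partial_{x}^{\alpha}\widetilde{U}\in H_{\mathrm{loc}}^{1}(B^{n+1}(x_{0},R),|y|^{1-2s})$ for every multi-index $\alpha$. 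Restricting back to $\mathbb{R}_{+}^{n+1}$, this puts both $\nabla_{x}U_{y}$ and $\mathcal{L}_{x}U$ in $L_{\mathrm{loc}}^{2}(B_{+}^{n+1}(x_{0},r),y^{1-2s})$.

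Given the regularity, the $H^{1}(y^{2s-1})$ membership (the space being well-defined since $y^{2s-1}$ is an $A_{2}$ weight for $s\in(0,1)$) reduces to the chain of identities
\[
\int y^{2s-1}\bigl(|W|^{2}+|\nabla_{x}W|^{2}+|W_{y}|^{2}\bigr)\,dxdy \;=\; \int y^{1-2s}\bigl(|U_{y}|^{2}+|\nabla_{x}U_{y}|^{2}+|\mathcal{L}_{x}U|^{2}\bigr)\,dxdy,
\]
whose right-hand side is finite locally by the previous step. The Dirichlet condition $W(x,0)=0$ on $B^{*}(x_{0},r)$ is then a direct restatement of the hypothesis $\lim_{y\to 0^{+}}y^{1-2s}\partial_{y}U=0$, now read as the trace of $W$ in $H^{1}(B_{+}^{n+1}(x_{0},r),y^{2s-1})$.

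For the weak formulation, I take $\phi\in C_{c}^{\infty}(B_{+}^{n+1}(x_{0},r))$ and substitute both identities:
\[
\int y^{2s-1}\widetilde{A}(x)\nabla_{x,y}W\cdot\nabla_{x,y}\phi\,dxdy \;=\; \int A(x)\nabla_{x}U_{y}\cdot\nabla_{x}\phi\,dxdy+\int\mathcal{L}_{x}U\cdot\phi_{y}\,dxdy.
\]
Integration by parts in $x$ in the first term (legitimate since $A\in C^{\infty}$, $\nabla_{x}U_{y}\in L_{\mathrm{loc}}^{2}$, and $\phi$ has compact support in the open set) produces $\int (\mathcal{L}_{x}U_{y})\phi\,dxdy=\int\partial_{y}(\mathcal{L}_{x}U)\phi\,dxdy$, using that $\mathcal{L}_{x}$ has $y$-independent coefficients; integration by parts in $y$ in the second term gives $-\int\partial_{y}(\mathcal{L}_{x}U)\phi\,dxdy$, and the two cancel. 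The main obstacle is not this algebraic cancellation but the regularity bookkeeping of the second paragraph: without the reflection argument and hypothesis~$(\mathcal{H})$, one cannot justify $\mathcal{L}_{x}U$ as a locally $L^{2}(y^{1-2s})$ function, and all subsequent manipulations (and even the definition of $\partial_{y}W$ pointwise) would collapse. Once this regularity is in hand, the rest of the proof is essentially a routine substitution.
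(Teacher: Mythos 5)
Your proposal is correct and follows essentially the same route as the paper: even reflection across $\{y=0\}$, higher $x$-regularity of $\widetilde{U}$ in the weighted space (which the paper re-derives via an explicit difference-quotient/Caccioppoli estimate rather than citing the proof of Proposition~\ref{Prop Holder regularity for degenerate}(c), but the content is identical), and then the identities $\nabla_{x}W=y^{1-2s}\nabla_{x}\partial_{y}U$ and $\partial_{y}W=y^{1-2s}\mathcal{L}_{x}U$ combined with the weight cancellation $y^{2s-1}\cdot y^{2(1-2s)}=y^{1-2s}$ to conclude $W\in H^{1}(B_{+}^{n+1}(x_{0},r),y^{2s-1})$. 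Your explicit verification of the weak formulation of the conjugate equation (the cancellation of the two integration-by-parts terms for test functions supported in the open upper half-ball) is a point the paper leaves implicit by deferring to the literature, and it is a welcome addition rather than a deviation.
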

\begin{proof}
As previous discussions, we define $\widetilde{U}$ and $\widetilde{W}$
to be even and odd extension by (\ref{eq:even reflection}) and (\ref{eq:odd reflection}),
respectively. Then $\widetilde{U}$ solves 
\[
\nabla_{x,y}\cdot(|y|^{1-2s}\widetilde{A}(x)\nabla_{x,y}\widetilde{U})=0\mbox{ in }B^{n+1}(x_{0},R),
\]
where we know the fact that $\widetilde{U}\in C^{0,\beta}(B^{n+1}(x_{0},R))$
and $\widetilde{U}(\cdot,y)\in C^{\infty}(B(x_{0},R))$ for $y>0$.
We will show that $\widetilde{W}\in H_{loc}^{1}(B^{n+1}(x_{0},R),|y|^{2s-1})$
as a weak solution of 
\begin{equation}
\nabla_{x,y}\cdot(|y|^{2s-1}\widetilde{A}(x)\nabla_{x,y}\widetilde{W})=0\mbox{ in \ensuremath{\mathcal{B}\Subset}}B^{n+1}(x_{0},R),\label{eq:W}
\end{equation}

First, it is easy to see that $\widetilde{W}\in L^{2}(B^{n+1}(x_{0},R),|y|^{2s-1})$.
Second, for $0<h\ll1$, we consider the differential quotient for
$\widetilde{U}$, then $\Delta_{x_{i}}^{h}\widetilde{U}\in H^{1}(B^{n+1}(x_{0},R),|y|^{1-2s})$
is a weak solution of 
\begin{equation}
\nabla_{x,y}\cdot(|y|^{1-2s}\widetilde{A}(x)\nabla_{x,y}(\Delta_{x_{i}}^{h}\widetilde{U}))=\nabla_{x,y}\cdot G\mbox{ in }B^{n+1}(x_{0},R),\label{one differentiation equation}
\end{equation}
for any $h>0$, where the function $G=-|y|^{1-2s}(\Delta_{x_{i}}^{h}\widetilde{A})\nabla_{x,y}\widetilde{U}$
and it is easy to see that $|y|^{2s-1}H\in L^{2}(B^{n+1}(x_{0},R),|y|^{1-2s}))$
for $i=1,2,\cdots,n$. Let $\eta\in C_{c}^{\infty}(B^{n+1}(x_{0},R))$
be a standard cutoff function such that $0\leq\eta\leq1$ and 
\[
\eta(x,y)=\begin{cases}
1 & \mbox{ for }x\in B^{n+1}(x_{0},\frac{3}{4}R),\\
0 & \mbox{ for }x\notin B^{n+1}(x_{0},R),
\end{cases},\quad\|\nabla_{x,y}\eta\|_{L^{\infty}(B^{n+1}(x_{0},R))}\leq\dfrac{C}{R},
\]
for some constant $C>0$. Now, consider $\eta^{2}\Delta_{x_{i}}^{h}\widetilde{U}\in H^{1}(B^{n+1}(x_{0},R),|y|^{1-2s})$
as a test function and multiply it on the both sides of (\ref{one differentiation equation})
and do the integration by parts over $B^{n+1}(x_{0},R)$, then we
have 
\begin{align}
 & \int_{B^{n+1}(x_{0},R)}|y|^{1-2s}\widetilde{A}(x)\nabla_{x,y}(\Delta_{x_{i}}^{h}\widetilde{U})\cdot\nabla_{x,y}(\eta^{2}\Delta_{x_{i}}^{h}\widetilde{U})dxdy\nonumber \\
= & -\int_{B^{n+1}(x_{0},R)}|y|^{1-2s}(\Delta_{x_{i}}^{h}\widetilde{A})\nabla_{x,y}\widetilde{U}\cdot\nabla_{x,y}(\eta^{2}\Delta_{x_{i}}^{h}\widetilde{U}).\label{Caccipolli1}
\end{align}
From a direct computation, it is not hard to see that
\[
\nabla_{x,y}(\eta^{2}\Delta_{x_{i}}^{h}\widetilde{U})=\eta^{2}\nabla_{x,y}(\Delta_{x_{i}}^{h}\widetilde{U})+2(\eta\Delta_{x_{i}}^{h}\widetilde{U})\nabla_{x,y}\eta
\]
and (\ref{Caccipolli1}) becomes 
\begin{align}
 & \int_{B^{n+1}(x_{0},R)}|y|^{1-2s}\eta^{2}\left|\nabla_{x,y}(\Delta_{x_{i}}^{h}\widetilde{U})\right|^{2}dxdy\nonumber \\
\leq & C\int_{B^{n+1}(x_{0},R)}|y|^{1-2s}\left|\nabla_{x,y}(\Delta_{x_{i}}^{h}\widetilde{U})\right|\left|\eta\right|\left|\Delta_{x_{i}}^{h}\widetilde{U}\right|\left|\nabla_{x,y}\eta\right|dxdy\nonumber \\
 & +C\int_{B^{n+1}(x_{0},R)}|y|^{1-2s}|\eta|^{2}\left|\nabla_{x,y}\widetilde{U}\right|\left|\nabla_{x,y}(\Delta_{x_{i}}^{h}\widetilde{U})\right|dxdy\nonumber \\
 & +C\int_{B^{n+1}(x_{0},R)}|y|^{1-2s}\left|\nabla_{x,y}\widetilde{U}\right||\eta|\left|\nabla_{x,y}\eta\right|\left|\Delta_{x_{i}}^{h}\widetilde{U}\right|dxdy.\label{Caccipolli 2}
\end{align}
Apply the Young's inequality on \eqref{Caccipolli 2} and absorb the
highest order term of $\widetilde{U}$ to the left hand side of \eqref{Caccipolli 2},
then we can derive 
\begin{align*}
 & \int_{B^{n+1}(x_{0},\frac{3}{4}R)}|y|^{1-2s}\left|\nabla_{x,y}(\Delta_{x_{i}}^{h}\widetilde{U})\right|^{2}dxdy\\
\leq & C\left\{ \int_{B^{n+1}(x_{0},R)}|y|^{1-2s}\left|\Delta_{x_{i}}^{h}\widetilde{U}\right|^{2}dxdy+\int_{B^{n+1}(x_{0},R)}|y|^{1-2s}\left|\nabla_{x,y}\widetilde{U}\right|^{2}dxdy\right\} \\
\leq & C\|\widetilde{U}\|_{H^{1}(B^{n+1}(x_{0},R),|y|^{1-2s})},
\end{align*}
where the constant $C>0$ is independent of $\widetilde{U}$ and $h$.
This implies that $\Delta_{x_{i}}^{h}(\nabla_{x,y}\widetilde{U})\in L^{2}(B^{n+1}(x_{0},\dfrac{3}{4}R),|y|^{1-2s})$
and 
\[
\|\Delta_{x_{i}}^{h}(\nabla_{x,y}\widetilde{U})\|_{L^{2}(B^{n+1}(x_{0},\frac{3}{4}R),|y|^{1-2s})}\leq C,\mbox{ for }i=1,2,\cdots.n,
\]
for some constant $C>0$ is independent of $\widetilde{U}$ and $h$.
Then use the same technique as in Proposition \eqref{Prop Holder regularity for degenerate},
then one can conclude $\|\partial_{x_{i}}(\nabla_{x,y}\widetilde{U})\|_{L^{2}(B^{n+1}(x_{0},\frac{3}{4}R),|y|^{1-2s})}$,
which means $\partial_{x_{i}}\widetilde{U}\in H^{1}(B^{n+1}(x_{0},\dfrac{3}{4}R),|y|^{1-2s})$
for $i=1,2,\cdots,n$.

It remains to show $\partial_{y}W\in L^{2}(B_{+}^{n+1}(x_{0},\dfrac{3}{4}R),y{}^{2s-1})$.
Note that 
\[
\partial_{y}W=\partial_{y}(y^{1-2s}\partial_{y}U)=y^{1-2s}\left(\dfrac{1-2s}{y}\partial_{y}U+\partial_{y}^{2}U\right)=y^{1-2s}\mathcal{L}_{x}U.
\]
Via the fact that $\partial_{x_{i}x_{j}}^{2}\widetilde{U}\in L^{2}(B^{n+1}(x_{0},\dfrac{3}{4}R),|y|^{1-2s})$,
this implies the lemma holds and completes the proof. 
\end{proof}

\section{Strong unique continuation principle and the Runge approximation
property\label{Section 5}}

In order to prove Theorem \ref{thm:(Approximation-theorem)}, we will
be using the strong unique continuation principle (SUCP) for the extension
operator $\mathscr{L}_{\widetilde{A}}^{1-2s}$. Our strategy in proving
Theorem \ref{thm:(Approximation-theorem)} is decomposed into two
parts. First, we will prove under the condition of Theorem \ref{thm:(Approximation-theorem)},
the solution of the extension problem will vanish to infinite order,
which is inspired by the proof of \cite[Proposition 2.2]{ruland2015unique}.
Second, we apply the SUCP for degenerate differential equation, which
was introduced by \cite[Corollary 3.9]{yu2016unique}. Combine these
two steps, then we can prove the SUCP for the operator $\mathscr{L}_{\widetilde{A}}^{1-2s}$.

\subsection{Strong unique continuation principle}

We begin with the definition of the vanishing to infinity order for
the degenerate case. 
\begin{defn}
 (Vanishing to infinite order) A function $\Psi\in L_{loc}^{2}(\mathbb{R}_{+}^{n+1},y^{1-2s})$
is vanishing to infinite order at a point $(x_{0},0)\in\mathbb{R}_{+}^{n+1}$
if for every $m\in\mathbb{N}$, we have 
\begin{equation}
\lim_{r\to0}r^{-m}\int_{B^{n+1}(x_{0},r)}|y|^{1-2s}\Psi^{2}(x,y)\,dxdy=0.\label{eq:vanishing}
\end{equation}

\end{defn}
We begin with the first step: Vanishing to infinite order.
\begin{thm}
\label{Theorem vanishing to infinite order} Given $u\in H^{s}(\mathbb{R}^{n})$,
let $U\in H^{1}(\mathbb{R}_{+}^{n+1},y^{1-2s})$ be the unique solution
of the extension problem \eqref{eq:Main Equ}. Suppose that $u=\mathcal{L}^{s}u=0$
in $B(x_{0},2R)$. Then $U$ vanishes to infinite order on $B^{*}(x_{0},R)$.\end{thm}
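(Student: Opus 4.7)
The plan is to combine the two hypotheses $u=0$ and $\mathcal{L}^{s}u=0$ on $B(x_{0},2R)$ into a homogeneous Cauchy condition for the extension $U$ on the hyperplane, then use even reflection to globalize across $\{y=0\}$, and finally run a Carleman-style argument for the resulting degenerate elliptic equation to force vanishing of infinite order at every point of $B^{*}(x_{0},R)$, in the spirit of \cite[Proposition~2.2]{ruland2015unique}.

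First, I will extract the two boundary traces on the slice. By hypothesis $U(\cdot,0)=u=0$ in $B(x_{0},2R)$, so the Dirichlet trace vanishes. Proposition~\ref{Extension lemma in the weak sense} gives
\[
\lim_{y\to 0^{+}} y^{1-2s}\partial_{y}U \;=\; \frac{\Gamma(-s)}{4^{s}\Gamma(s)}\,\mathcal{L}^{s}u
\]
in the sense of $H^{-s}(\mathbb{R}^{n})$; since $\mathcal{L}^{s}u=0$ in $B(x_{0},2R)$, the weighted conormal derivative vanishes on $B^{*}(x_{0},2R)$ as well. Thus $U$ carries zero Dirichlet and zero weighted Neumann data on the $n$-dimensional open set $B^{*}(x_{0},2R)$.

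Next, I would globalize by even reflection. Setting $\widetilde{U}(x,y):=U(x,|y|)$ on $B^{n+1}(x_{0},2R)$, the vanishing of $\lim_{y\to 0^{+}}y^{1-2s}\partial_{y}U$ is precisely the compatibility condition making $\widetilde{U}\in H^{1}(B^{n+1}(x_{0},2R),|y|^{1-2s})$ a weak solution of
\[
\nabla_{x,y}\cdot\bigl(|y|^{1-2s}\widetilde{A}(x)\nabla_{x,y}\widetilde{U}\bigr) \;=\; 0 \qquad \text{in } B^{n+1}(x_{0},2R),
\]
with $\widetilde{A}$ as in \eqref{eq:An+1}, and moreover $\widetilde{U}(x,0)=u(x)=0$ for all $x\in B(x_{0},2R)$. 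To prove infinite-order vanishing at an arbitrary point $(x_{*},0)\in B^{*}(x_{0},R)$, I would adapt Ruland's Carleman inequality to this variable coefficient degenerate operator: introduce a radial weight $\phi_{\tau}$ centered at $(x_{*},0)$ adapted to the $A_{2}$ weight $|y|^{1-2s}$ and to the anisotropic geometry of $\widetilde{A}$, conjugate by $e^{\phi_{\tau}}$, split the conjugated operator into symmetric and antisymmetric parts, apply the resulting commutator identity to a cutoff of $\widetilde{U}$, absorb error terms, and let $\tau\to\infty$ to deduce \eqref{eq:vanishing}. The vanishing Cauchy data on the slice ensures the hyperplane contributions in the Carleman identity drop out.

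The key technical obstacle is the extension of Ruland's Carleman estimate from the constant coefficient Caffarelli--Silvestre extension to the variable anisotropic matrix $\widetilde{A}(x)$ combined with the singular weight $|y|^{1-2s}$. In the constant coefficient setting the critical commutator is explicit and visibly positive, whereas variable coefficients produce additional $\nabla A$ terms from conjugation and from commuting with the weight; showing that these are strictly of lower order and can be absorbed by the positive second-order contributions for $\tau$ large is precisely where the $C^{\infty}$-smoothness and uniform ellipticity of $A$ in hypothesis $(\mathcal{H})$ enter, together with the weighted Sobolev calculus developed in Section~\ref{Section 4}. Once this Carleman inequality is in hand, the passage from it to \eqref{eq:vanishing} via the Cauchy condition and the $\tau\to\infty$ absorption is standard.
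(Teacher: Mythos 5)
Your first two steps coincide with the paper's: you correctly extract the homogeneous Cauchy data ($U(\cdot,0)=u=0$ and $\lim_{y\to 0^{+}}y^{1-2s}\partial_{y}U=\frac{\Gamma(-s)}{4^{s}\Gamma(s)}\mathcal{L}^{s}u=0$ on $B^{*}(x_{0},2R)$ via Proposition \ref{Extension lemma in the weak sense}) and pass to the even reflection $\widetilde{U}$ solving $\nabla_{x,y}\cdot(|y|^{1-2s}\widetilde{A}(x)\nabla_{x,y}\widetilde{U})=0$ in the full ball. The gap is in what you do next. You propose to obtain the infinite-order vanishing from a Carleman estimate for the variable-coefficient degenerate operator, but you do not prove that estimate, you cite no source for it, and you yourself identify it as the key obstacle. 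This is not a deferrable technicality: Carleman estimates for the Caffarelli--Silvestre-type weight $|y|^{1-2s}$ are delicate already in the constant-coefficient case, and their extension to a variable anisotropic $\widetilde{A}(x)$ is precisely the part of the theory that is not available -- which is why this paper (following \cite{yu2016unique}) uses an Almgren frequency function and a doubling inequality, not a Carleman inequality, for the subsequent SUCP step (Proposition \ref{Prop SUCP}). As written, the core of your argument rests on an unestablished and genuinely hard inequality.

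Moreover, the theorem you are asked to prove does not need any Carleman machinery. The paper's proof of the infinite-order vanishing is elementary: it writes the equation as $\partial_{y}(y^{1-2s}\partial_{y}U)=-y^{1-2s}\nabla_{x}\cdot(A(x)\nabla_{x}U)$, uses the $C^{\infty}$-regularity of $U$ in the $x$-direction (Proposition \ref{Prop Holder regularity for degenerate}(c)) to differentiate the equation tangentially to any order, and iterates the calculus fact that $h(0)=0$ together with $\lim_{y\to 0}y^{a}h'(y)=0$ forces $\lim_{y\to 0}y^{a-1}h(y)=0$. Each pass through this "one-step improvement" gains one power of $y$, yielding $\lim_{y\to 0^{+}}y^{-m}U(x,y)=0$ for every $m$, which is the claimed conclusion \eqref{eq:vanishing}. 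So the correct fix is not to build the Carleman inequality but to replace it with this bootstrap in the $y$-variable; the heavy unique-continuation tool only enters afterwards, to upgrade infinite-order vanishing to $\widetilde{U}\equiv 0$.
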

\begin{proof}
We will follow ideas of proof of \cite[Proposition 2.2]{ruland2015unique}.

1. We know from Proposition \ref{Extension lemma in the weak sense}
that $U=\lim_{y\to0}y^{1-2s}\partial_{y}U=0$ in $B^{*}(x_{0},2R)$.
Define $W:=y^{1-2s}\partial_{y}U$. Then by Lemma~\ref{lem:The Conjugate Equation}
we know that $W\in H^{1}(B_{+}^{n+1}(x_{0},\dfrac{3}{2}R),y^{1-2s})$
solves $\mathscr{L}_{\widetilde{A}}^{2s-1}W=0$ in $B_{+}^{n+1}(x_{0},\dfrac{3}{2}R)$
with $W(x,0)=0$. We define $\widetilde{U}$ and $\widetilde{W}$
given by even reflection (\ref{eq:even reflection}) and odd reflection
(\ref{eq:odd reflection}), respectively. It is straightforwardly
verified that $\widetilde{U}\in H^{1}\left(B^{n+1}(x_{0},2R),|y|^{1-2s}\right)$
satisfies 
\begin{equation}
\nabla_{x,y}\cdot(|y|^{1-2s}\widetilde{A}(x)\nabla_{x,y}\widetilde{U})=0\mbox{ in }B^{n+1}(x_{0},2R),\label{eq:U equation in SUCP}
\end{equation}
and $\widetilde{W}\in H^{1}\left(B^{n+1}(x_{0},\dfrac{3}{2}R),|y|^{-1+2s}\right)$
is a solution to 
\begin{equation}
\nabla_{x,y}\cdot(|y|^{-1+2s}\widetilde{A}(x)\nabla_{x,y}\widetilde{W})=0\mbox{ in }B^{n+1}(x_{0},\dfrac{3}{2}R).\label{W equation in SUCP}
\end{equation}
Hence recalling by Proposition \ref{Prop Holder regularity for degenerate},
the functions $\widetilde{U}$ and $\widetilde{W}$ are Hölder continuous
in $B^{n+1}(x_{0},R)$. As a consequence $U$ and $W$ are both Hölder
continuous in $\overline{B_{+}^{n+1}(x_{0},R)}$. 

2. It can be seen by using the mean value theorem and the fundamental
theorem of calculus, for all $h\in C^{1}((0,1))\cap C([0,1])$ and
any $a\in(-\infty,1)$ if $h(0)=0$ and $\lim_{y\to0}y^{a}\dfrac{d}{dy}h(y)=0$
then 
\begin{equation}
\lim_{y\to0}y^{a-1}h(y)=0.\label{eq:Key ingredient of iteration}
\end{equation}
The remaining proof of this theorem follows the proof of \cite[Proposition 2.2]{ruland2015unique}.
We divided it into the following three steps arguments.

\textbf{}\\
\textbf{Step 1. One-step improvement}

As a solution to \eqref{eq:U equation in SUCP}, we know from Lemma~\ref{Prop Holder regularity for degenerate}
that $\widetilde{U}$ is $C^{0,\beta}$ in any direction in $\mathbb{R}^{n+1}$
and $C^{\infty}$ in the $x$-direction since $\widetilde{A}(x)$
is $C^{\infty}$-smooth. Thus, we can differentiate \eqref{eq:U equation in SUCP}
with respect to all $x_{i}$-direction up to an arbitrary order for
$i=1,2,\cdots,n$, due to the $C^{\infty}$-smoothness. By using the
continuity of $U$, we know that
\begin{equation}
\lim_{y\to0^{+}}y{}^{1-2s}\partial_{y}\left(\nabla_{x}\cdot(A(x)\nabla_{x}U)\right)=0.\label{eq:Limit 1}
\end{equation}
Then \eqref{eq:Key ingredient of iteration} will imply that 
\begin{equation}
\lim_{y\to0^{+}}y^{-2s}\nabla_{x}\cdot(A(x)\nabla_{x}U)=0.\label{eq:Limit 2}
\end{equation}
Recall that $U$ satisfies the equation 
\begin{equation}
\nabla_{x,y}\cdot(y{}^{1-2s}\widetilde{A}(x)\nabla_{x,y}U)=0\mbox{ in }B_{+}^{n+1}(x_{0},R),\label{nonreflection equation}
\end{equation}
or equivalently, $U$ fulfills 
\[
\partial_{y}(y^{1-2s}\partial_{y}U)=-y^{1-2s}\nabla_{x}\cdot(A(x)\nabla_{x}U).
\]
By using \eqref{eq:Limit 1}, we have 
\[
\lim_{y\to0^{+}}\partial_{y}(y^{1-2s}\partial_{y}U)=0.
\]
Next, recall that $\lim_{y\to0}y^{1-2s}\partial_{y}U=d_{s}\mathcal{L}^{s}u=0$
for some constant $d_{s}$ and use \eqref{eq:Key ingredient of iteration}
again, then we obtain 
\[
\lim_{y\to0^{+}}y^{-2s}\partial_{y}U=\lim_{y\to0^{+}}y^{-2s-1}U=0.
\]

\textbf{}\\
\textbf{Step 2. Iteration}

Let us differentiate \eqref{nonreflection equation} with respect
to $y$ and consider $U$ to be a weak solution of 
\begin{eqnarray}
\partial_{y}^{2}(y^{1-2s}\partial_{y}U) & = & -(1-2s)y^{-2s}\nabla_{x}\cdot(A(x)\nabla_{x}U)\label{eq:one more derivative}\\
 &  & -y^{1-2s}\partial_{y}\nabla_{x}\cdot(A(x)\nabla_{x}U)\mbox{ in }B_{+}^{n+1}(x_{0},R)\nonumber 
\end{eqnarray}
with 
\[
\lim_{y\to0^{+}}\partial_{y}(y^{1-2s}\partial_{y}U)=0\mbox{ on }B_{+}^{*}(x_{0},R).
\]
Plug \eqref{eq:Limit 1} and \eqref{eq:Limit 2} into \eqref{eq:one more derivative},
we have 
\[
\lim_{y\to0^{+}}\partial_{y}^{2}(y^{1-2s}\partial_{y}U)=\lim_{y\to0^{+}}y^{-2s-2}U=0.
\]
As previous arguments, let us take the function $W(x,y)=y^{1-2s}\partial_{y}U(x,y)$
with $\lim_{y\to0^{+}}W(x,y)=0$, then we can reflect the function
$W$ to be $\widetilde{W}(x,y)$ into a whole ball in $\mathbb{R}^{n+1}$(see
\ref{W equation in SUCP}). Since $U(x,y)$ is $C^{\infty}$-smooth
in the $x$-direction, so is $\widetilde{W}(x,y)$. Therefore, we
can differentiate $\widetilde{W}(x,y)$ with respect to $x$-variables
with arbitrary order. Then by repeating Step 1, we will obtain the
continuity of $\partial_{y}\left(\nabla_{x}\cdot(A(x)\nabla W)\right)$
and 
\[
\lim_{y\to0}\partial_{y}\left(\nabla_{x}\cdot(A(x)\nabla W)\right)=0.
\]
To sum up, after these iterate procedures and use the $x$-direction
derivatives, then we can get
\begin{eqnarray*}
\lim_{y\to0^{+}}\partial_{y}\left(y^{1-2s}\partial_{y}(\nabla_{x}\cdot(A(x)\nabla_{x}U))\right) & = & \lim_{y\to0^{+}}y^{-2s}\partial_{y}(\nabla_{x}\cdot(A(x)\nabla_{x}U))\\
 & = & \lim_{y\to0^{+}}y^{-2s-1}(\nabla_{x}\cdot(A(x)\nabla_{x}U))=0.
\end{eqnarray*}
Note that the right hand sides of these terms is obtained by differentiating
(\ref{eq:one more derivative}) with $y$ direction (in the weak sense)
and they may  involve higher order derivatives with respect to $x$-variables,
hence, we can use the bootstrap arguments to proceed previous arguments.

\textbf{}\\
\textbf{Step 3. Conclusion}

By using the bootstrap arguments, we can get 
\begin{equation}
\lim_{y\to0^{+}}y^{-m}U(x,y)=0\mbox{ for all }m\in\mathbb{N}\mbox{ and }x\in B^{*}(x_{0},R),\label{vanishing to infinitie order at y}
\end{equation}
which implies that $U$ vanishes to infinite order in the $y$-direction
on the plane $\partial\mathbb{R}_{+}^{n+1}$, and in the tangential
$x$-direction it is zero on the plane $\partial\mathbb{R}_{+}^{n+1}$
and this proves the theorem.\end{proof}
\begin{cor}
Let $U$ be the same function as in Theorem \ref{Theorem vanishing to infinite order}
and $\widetilde{U}$ be the even reflection of $U$ given by (\ref{eq:even reflection}),
then the function $\widetilde{U}$ vanishes to infinite order on $B^{*}(x_{0},R)$.\end{cor}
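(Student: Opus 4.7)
The plan is to reduce the vanishing of the reflected function $\widetilde{U}$ on the full ball directly to the vanishing of $U$ on the upper half-ball, using only the symmetry of the reflection and of the weight $|y|^{1-2s}$.

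Fix an arbitrary point $(x_{1},0)\in B^{*}(x_{0},R)$ and a radius $r>0$ small enough so that $B^{n+1}(x_{1},r)\subseteq B^{n+1}(x_{0},R)$. Splitting the ball into its upper and lower halves and using that $\widetilde{U}(x,y)=U(x,|y|)$ together with $|y|^{1-2s}=||y||^{1-2s}$, one gets
\[
\int_{B^{n+1}(x_{1},r)}|y|^{1-2s}\widetilde{U}^{2}(x,y)\,dxdy
 =\int_{B^{n+1}_{+}(x_{1},r)}y^{1-2s}U^{2}(x,y)\,dxdy
 +\int_{B^{n+1}_{-}(x_{1},r)}|y|^{1-2s}U^{2}(x,-y)\,dxdy,
\]
and the change of variables $y\mapsto-y$ in the second integral collapses the right hand side to
\[
2\int_{B^{n+1}_{+}(x_{1},r)}y^{1-2s}U^{2}(x,y)\,dxdy.
\]

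Theorem~\ref{Theorem vanishing to infinite order} asserts that $U$ vanishes to infinite order at every point of $B^{*}(x_{0},R)$; interpreted on the upper half-space (where $U$ lives) this is exactly the statement that the upper half-ball integral above decays faster than any polynomial in $r$. Multiplying by the harmless constant $2$ and dividing by $r^{m}$ for any $m\in\mathbb{N}$ gives
\[
\lim_{r\to 0}r^{-m}\int_{B^{n+1}(x_{1},r)}|y|^{1-2s}\widetilde{U}^{2}(x,y)\,dxdy
 =2\lim_{r\to 0}r^{-m}\int_{B^{n+1}_{+}(x_{1},r)}y^{1-2s}U^{2}(x,y)\,dxdy=0,
\]
which is the definition of $\widetilde{U}$ vanishing to infinite order on $B^{*}(x_{0},R)$.

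There is essentially no obstacle here: the only thing to double-check is that the notion of ``vanishing to infinite order'' used for $U$ in Theorem~\ref{Theorem vanishing to infinite order} (which is naturally phrased with an upper half-ball, since $U$ is only defined on $\mathbb{R}^{n+1}_{+}$) is compatible with the full-ball definition used for $\widetilde{U}$; the symmetry argument above is precisely the bridge between the two. Should one instead wish to start from the pointwise statement $\lim_{y\to 0^{+}}y^{-m}U(x,y)=0$ that appears in Step~3 of the proof of Theorem~\ref{Theorem vanishing to infinite order}, the H\"older continuity from Proposition~\ref{Prop Holder regularity for degenerate} upgrades that pointwise bound to a locally uniform one on $B^{*}(x_{0},R')$ for any $R'<R$, after which integrating $y^{1-2s}\cdot y^{2m}$ against $dxdy$ on $B^{n+1}_{+}(x_{1},r)$ yields an $O(r^{n+2+2m-2s})$ bound and the same conclusion.
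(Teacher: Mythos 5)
Your proof is correct and rests on the same observation as the paper's: the evenness of $\widetilde{U}$ and of the weight $|y|^{1-2s}$ in $y$ transfers the vanishing from the upper half-ball to the full ball (the paper phrases this as ``repeat the same proof in the lower half space'' and combines the two one-sided pointwise limits $\lim_{y\to0^{\pm}}|y|^{-m}\widetilde{U}=0$, whereas you perform the equivalent change of variables $y\mapsto-y$ directly in the weighted integral). Your formulation has the minor advantage of landing exactly on the integral definition \eqref{eq:vanishing} of vanishing to infinite order, which is the form used later in the proof of Proposition \ref{Prop SUCP}; your closing remark correctly identifies the only point needing care, namely reconciling the pointwise statement \eqref{vanishing to infinitie order at y} actually proved in Theorem \ref{Theorem vanishing to infinite order} with the integral definition, a gap the paper itself leaves implicit.
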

\begin{proof}
Since $\widetilde{U}$ is an even reflection of $U$, then we can
repeat the same proof as in Theorem \ref{Theorem vanishing to infinite order}
in the lower half space $\mathbb{R}^{n+1}\cap\{y<0\}$, then we have
\begin{equation}
\lim_{y\to0^{-}}(-y)^{-m}U(x,-y)=0\mbox{ for all }m\in\mathbb{N}\mbox{ and }x\in B^{*}(x_{0},R).\label{eq:vanishing to infinite order from negative part}
\end{equation}
Combining \eqref{vanishing to infinitie order at y} and \eqref{eq:vanishing to infinite order from negative part},
we obtain that 
\begin{equation}
\lim_{y\to0}|y|^{-m}\widetilde{U}(x,y)=0\mbox{ for all }m\in\mathbb{N}\mbox{ and }x\in B^{*}(x_{0},R),\label{vanishing to infinite order for |y|}
\end{equation}
which completes the proof.\end{proof}
\begin{prop}
\label{Prop SUCP}\cite[Corollary 3.9]{yu2016unique} Let $\widetilde{U}\in H^{1}(B^{n+1}(x_{0},1),|y|^{1-2s})$
be a solution to 
\begin{equation}
\nabla_{x,y}\cdot(|y|^{1-2s}\widetilde{A}(x)\nabla_{x,y}\widetilde{U})=0\mbox{ in }B^{n+1}(0,1).\label{eq:SUCP equation}
\end{equation}
Then the equation \eqref{eq:SUCP equation} possesses the SUCP, whenever
$A(x)$ satisfies the hypothesis ($\mathcal{H}$). 
\end{prop}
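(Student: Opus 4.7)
The statement is cited from \cite[Corollary 3.9]{yu2016unique}, so the plan is to outline the standard Garofalo--Lin style argument adapted to the degenerate operator with weight $|y|^{1-2s}$ and smooth anisotropic matrix $\widetilde{A}(x)$. Recall that $|y|^{1-2s}$ is an $A_{2}$ Muckenhoupt weight, and the hypothesis $(\mathcal{H})$ gives uniform ellipticity and smoothness of $\widetilde{A}$. The plan is to establish an Almgren-type frequency function for $\widetilde{U}$, prove its (almost) monotonicity, extract a doubling inequality, and then conclude the SUCP by contradiction with vanishing to infinite order.

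First I would reduce to a standard normal form near the point of infinite-order vanishing. After translating so that the point is the origin, I would introduce the $\widetilde{A}(0)$-adapted Euclidean structure by an invertible linear change of coordinates in $x$ that sends $\widetilde{A}(0)$ to the identity on the horizontal block (the $y$-direction is untouched, since the last entry of $\widetilde{A}$ is already $1$). Writing $\widetilde{A}(x)=I_{n+1}+B(x)$ with $B(0)=0$ and $B\in C^{\infty}$, the equation takes the perturbative form
\begin{equation*}
\nabla_{x,y}\cdot\bigl(|y|^{1-2s}\nabla_{x,y}\widetilde{U}\bigr)
=-\nabla_{x,y}\cdot\bigl(|y|^{1-2s}B(x)\nabla_{x,y}\widetilde{U}\bigr)\quad\text{in }B^{n+1}(0,1).
\end{equation*}
This brings us into the framework of \cite{fabes1982local} plus a variable-coefficient perturbation whose coefficients vanish linearly at the origin. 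In these coordinates, use geodesic polar coordinates $(r,\theta)$ with $r=|(x,y)|$ and $\theta=(x,y)/r$, so that the weight $|y|^{1-2s}=r^{1-2s}|\theta_{n+1}|^{1-2s}$ factorizes cleanly.

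Next, define the Almgren-type height and energy functions
\begin{align*}
H(r) &:=\int_{\partial B^{n+1}(0,r)}|y|^{1-2s}\widetilde{U}^{2}\,\mu(x,y)\,dS,\\
D(r) &:=\int_{B^{n+1}(0,r)}|y|^{1-2s}\widetilde{A}(x)\nabla_{x,y}\widetilde{U}\cdot\nabla_{x,y}\widetilde{U}\,\mu(x,y)\,dx\,dy,
\end{align*}
where $\mu$ is a geometric correction factor ensuring that $\widetilde{A}\nu\cdot(x,y)$ equals $r$ times the conformal density on spheres (this is the standard device making $\partial_{\nu_{\widetilde{A}}}=r\partial_{r}$ up to lower order). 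The frequency is $N(r):=rD(r)/H(r)$. The two key identities to derive are:
\begin{enumerate}
\item A differentiation formula $H'(r)=\frac{n+1-2s}{r}H(r)+2D(r)+E_{H}(r)$, obtained by the weak form of the equation against $\widetilde{U}$ and a Rellich--Pohozaev identity with multiplier $(x,y)\cdot\nabla_{x,y}\widetilde{U}$.
\item A differentiation formula $D'(r)=\frac{n-2s}{r}D(r)+2\int_{\partial B^{n+1}(0,r)}|y|^{1-2s}(\partial_{\nu_{\widetilde{A}}}\widetilde{U})^{2}\,\mu\,dS+E_{D}(r)$.
\end{enumerate}
Here $E_{H},E_{D}$ are controlled error terms of order $rD(r)$ or $H(r)$, arising from the smoothness of $\widetilde{A}$ and the fact that $B(x)=O(|x|)$. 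Combining these via Cauchy--Schwarz on the sphere yields $N'(r)\geq -C N(r)$, so $e^{Cr}N(r)$ is monotone non-decreasing; in particular $N(r)\leq N(1)+C$ for $r\in(0,1)$.

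From the monotonicity one deduces, by integrating $H'(r)/H(r)$, the growth estimate $H(r)\geq c\,r^{2N(1)+C}H(1)$, and by integrating in $r$ the doubling inequality
\begin{equation*}
\int_{B^{n+1}(0,2r)}|y|^{1-2s}\widetilde{U}^{2}\,dx\,dy
\leq C\int_{B^{n+1}(0,r)}|y|^{1-2s}\widetilde{U}^{2}\,dx\,dy
\end{equation*}
for all sufficiently small $r$. If $\widetilde{U}$ vanishes to infinite order at $0$ then the left-hand side decays faster than any polynomial in $r$, forcing $\widetilde{U}\equiv 0$ in a neighborhood of $0$; a connectedness argument via the weak unique continuation (which itself follows from the doubling inequality) extends this to all of $B^{n+1}(0,1)$. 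The main obstacle I anticipate is the careful control of the error terms $E_{H},E_{D}$ on degenerate spheres where $|y|$ vanishes: the anisotropic multiplier must be chosen so that the boundary contributions at $\{y=0\}$ cancel (using that $\widetilde{A}$ decouples the $y$-direction from $x$ and that $1-2s<1$ makes $|y|^{1-2s}$ integrable across $\{y=0\}$), and one must verify that the perturbation $B(x)$ does not break the monotonicity at leading order; this is where the $C^{\infty}$-regularity of $A$ in hypothesis $(\mathcal{H})$, rather than mere boundedness, is used.
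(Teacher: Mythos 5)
Your proposal is correct and follows essentially the same route as the paper: the paper also normalizes $\widetilde{A}(0)=I_{n+1}$, introduces the correction factor $\mu(z)=(\widetilde{A}(z)z)\cdot z/|z|^{2}$ and the vector field $\overrightarrow{\beta}=\widetilde{A}(z)z/\mu(z)$, derives the $H'(r)$ and $D'(r)$ identities via Rellich-type computations, obtains $N'(r)\geq -CN(r)$ and the doubling inequality in the Appendix, and then concludes the SUCP by iterating the doubling inequality against the infinite-order vanishing. The only cosmetic differences are that the paper weights only $H(r)$ (not $D(r)$) by $\mu$ and kills the whole ball $B^{n+1}(x_{0},1)$ directly by the iteration rather than first getting vanishing near the origin and then propagating by weak unique continuation.
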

Recall that the equation \eqref{eq:SUCP equation} has the SUCP if
$\widetilde{U}\in H^{1}(B^{n+1}(0,1),|y|^{1-2s})$ is a weak solution
of \eqref{eq:SUCP equation} and $\widetilde{U}$ vanishes to infinite
order, then $\widetilde{U}\equiv0$ in $B^{n+1}(0,1)$.
\begin{proof}[Proof of Proposition \ref{Prop SUCP}]
 Firstly, the condition of vanishing to infinite order \eqref{vanishing to infinite order for |y|}
shows that $U\in L^{2}(B^{*}(x_{0},R)\times(-r_{0},r_{0}),|y|^{1-2s})$
for $R,r_{0}\ll1$, since 
\begin{equation}
\int_{B^{*}(x_{0},R)\times(-r_{0},r_{0})}|y|^{1-2s}|\widetilde{U}|^{2}\,dxdy\leq\int_{B^{*}(x_{0},R)\times(-r_{0},r_{0})}|y|^{-m}|\widetilde{U}|^{2}\,dxdy<1\label{eq:VTF}
\end{equation}
for $y\leq r_{0}\ll1$ being sufficiently small enough and for any
$m\in\mathbb{N}$ with $m\geq2$.

On the other hand, by using the doubling inequality \eqref{Doubling inequality}
in Appendix, we have 
\begin{equation}
\int_{B^{n+1}(x_{0},1)}|y|^{1-2s}|\widetilde{U}|^{2}dxdy\leq C\int_{B^{n+1}(x_{0},\frac{1}{2})}|y|^{1-2s}|\widetilde{U}|^{2}dxdy\label{eq:doubling 2}
\end{equation}
where the constant $C$ same as in \eqref{Doubling inequality}. Now,
by iterating \eqref{eq:doubling 2}, then we have 

\begin{align*}
 & \int_{B^{n+1}(x_{0},1)}|y|^{1-2s}|\widetilde{U}|^{2}dxdy\\
\leq & C^{N}\int_{B^{n+1}(x_{0},\frac{1}{2^{N}})}|y|^{1-2s}|\widetilde{U}|^{2}dxdy\\
\leq & C^{N}(\frac{1}{2})^{N(m-1)}\int_{B^{n+1}(x_{0},\frac{1}{2^{N}})}|y|^{2-2s-m}|\widetilde{U}|^{2}dxdy\\
\leq & C^{N}(\frac{1}{2})^{N(m-1)}\int_{B(x_{0},R)\times(-r_{0},r_{0})}|y|^{-m}|\widetilde{U}|^{2}dxdy,
\end{align*}
for $N$ large such that $\dfrac{1}{2^{N}}<\min\left\{ R,r_{0}\right\} $
and for any $m\in\mathbb{N}$ with $m\geq2$. Now, since $\widetilde{U}$
has vanishing order at $x_{0}$, by using \eqref{eq:VTF} $\int_{B(x_{0},R)\times(-r_{0},r_{0})}|y|^{-m}|\widetilde{U}|^{2}dxdy$
remains bounded and $C^{N}(\dfrac{1}{2})^{Nm}\to0$ as $m\to\infty$.
This implies $\widetilde{U}=0$ in $B^{n+1}(x_{0},1)$, which completes
the proof.\end{proof}
\begin{lem}
\label{Lemma with zero}Let $u\in H^{s}(\mathbb{R}^{n})$, if $u=\mathcal{L}^{s}u=0$
in any ball $B(x_{0},R)\subseteq\mathbb{R}^{n}$, then $U=0$ in $\overline{B_{+}^{n+1}(x_{0},R)}$,
where $U$ is the function in Theorem \ref{Theorem vanishing to infinite order}.\end{lem}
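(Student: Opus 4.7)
The plan is to chain together the three substantial results that precede this lemma: the vanishing-to-infinite-order theorem, its corollary about the even reflection, and the SUCP proposition for the degenerate operator.

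First, I would observe that the assumption $u = \mathcal{L}^s u = 0$ in $B(x_0,R)$ means that at every point $x_1 \in B(x_0,R)$ there is some small ball $B(x_1, 2\rho) \subset B(x_0,R)$ on which the hypotheses of Theorem~\ref{Theorem vanishing to infinite order} hold (after a harmless rescaling). Hence the extension $U$ vanishes to infinite order on $B^*(x_1,\rho)$, and in particular $U$ vanishes to infinite order at the point $(x_1,0)$ for every $x_1 \in B(x_0,R)$. By the corollary that immediately follows Theorem~\ref{Theorem vanishing to infinite order}, the even reflection $\widetilde{U}$ defined by \eqref{eq:even reflection} also vanishes to infinite order at each such $(x_1,0)$.

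Next, I would invoke the fact, proved in Section~4, that $\widetilde{U} \in H^1_{\mathrm{loc}}(B^{n+1}(x_0,R), |y|^{1-2s})$ is a weak solution of the reflected degenerate equation $\nabla_{x,y}\cdot(|y|^{1-2s}\widetilde{A}(x)\nabla_{x,y}\widetilde{U}) = 0$. Since $A$ satisfies hypothesis $(\mathcal{H})$, Proposition~\ref{Prop SUCP} applies, so the SUCP gives $\widetilde{U} \equiv 0$ in a whole open ball $B^{n+1}(x_1,r_{x_1})$ around each $(x_1,0) \in B^*(x_0,R)$. In particular the open set $\{\widetilde{U} = 0\}$ contains a neighborhood of the entire hyperplane slice $B^*(x_0,R)$.

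Finally, to upgrade this local vanishing to vanishing on the whole upper-half ball, I would run a standard chaining/covering argument: at any point $(x_*,y_*) \in B^{n+1}(x_0,R)$, join it to some point of $B^*(x_0,R)$ by a path inside $B^{n+1}(x_0,R)$ and cover the path by finitely many balls. On the first ball we already know $\widetilde{U} \equiv 0$, which forces $\widetilde{U}$ to vanish to infinite order at any interior point of that ball; then Proposition~\ref{Prop SUCP} pushes the zero set into the next ball, and so on. This propagates $\widetilde{U} \equiv 0$ throughout $B^{n+1}(x_0,R)$, and restricting to $y \ge 0$ together with the continuity of $\widetilde{U}$ (from Proposition~\ref{Prop Holder regularity for degenerate}) yields $U = 0$ on $\overline{B_+^{n+1}(x_0,R)}$.

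The main obstacle is the propagation step in the last paragraph: Proposition~\ref{Prop SUCP} as stated is a pointwise SUCP statement, and one must check that the ``vanishing in an open set'' hypothesis at each intermediate ball genuinely forces infinite-order vanishing at a boundary point of the next ball, so that the SUCP can be iterated. This is routine once one knows that zero on an open set implies infinite-order vanishing at every point of that set in the weighted sense of \eqref{eq:vanishing}, but it is the one place where care is needed to keep the argument inside the function class $H^1(\,\cdot\,,|y|^{1-2s})$ where Proposition~\ref{Prop SUCP} is formulated.
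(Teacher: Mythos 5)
Your proof follows the same skeleton as the paper's: Theorem \ref{Theorem vanishing to infinite order} gives infinite-order vanishing of $U$ on the flat part, the corollary transfers this to the even reflection $\widetilde{U}$, and Proposition \ref{Prop SUCP} then kills $\widetilde{U}$. Your opening observation (shrinking the ball so that the ``$2R$'' hypothesis of Theorem \ref{Theorem vanishing to infinite order} is met) is a point where you are actually more careful than the paper, which applies the theorem with the radii as given.

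Where you diverge is the final propagation step, and this is both unnecessary and, as written, not quite supported by the results you cite. Proposition \ref{Prop SUCP} is not merely a ``local'' statement producing a small ball of vanishing around each $(x_1,0)$: its proof iterates the doubling inequality \eqref{Doubling inequality} centered at the point of infinite-order vanishing, so from vanishing to infinite order at the \emph{single} center $(x_0,0)$ it already yields $\widetilde{U}\equiv 0$ on the \emph{entire} ball in which the reflected equation \eqref{eq:reflection extension} holds, i.e.\ all of $B^{n+1}(x_0,R')$ for $R'$ up to $R$. One application at the center therefore finishes the lemma, which is exactly what the paper does; no chaining is needed. Moreover, the chaining as you describe it cannot be run purely on Proposition \ref{Prop SUCP}: that proposition (via Definition 5.1 and the Almgren/doubling machinery in the Appendix) is formulated only for infinite-order vanishing at points of the degenerate hyperplane $\{y=0\}$, so ``pushing the zero set into the next ball'' at a ball centered at $(x_1,y_1)$ with $y_1\neq 0$ would instead require the classical weak unique continuation property for uniformly elliptic equations with smooth coefficients (the weight $|y|^{1-2s}$ being smooth and nondegenerate away from $\{y=0\}$) --- this is the device the paper uses later, in Case 1 of the proof of Theorem \ref{thm:(Approximation-theorem)}, but it is not what Proposition \ref{Prop SUCP} asserts. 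So: delete the covering argument, apply Theorem \ref{Theorem vanishing to infinite order} once at $x_0$ (with radius halved), and invoke Proposition \ref{Prop SUCP} once on $B^{n+1}(x_0,R)$.
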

\begin{proof}
As $u\in H^{s}(\mathbb{R}^{n})$ satisfying $u|_{B(x_{0},R)}=\mathcal{L}{}^{s}u|_{B(x_{0},R)}=0$,
so from Theorem \ref{Theorem vanishing to infinite order}, we have
$U$ vanishes infinite order on $\partial\mathbb{R}_{+}^{n+1}$ so
does $\widetilde{U}$, where $\widetilde{U}$ is the even reflection
of $U$ defined by \eqref{eq:even reflection}. Therefore, by using
Proposition \ref{Prop SUCP}, we have SUCP for \eqref{eq:SUCP equation}.
Consequently it follows $\widetilde{U}=0$ in $B^{n+1}(x_{0},R)$
so $U=0$ in $\overline{B_{+}^{n+1}(x_{0},R)}$. 
\end{proof}

\subsection{Proof of Theorem \ref{thm:(Approximation-theorem)}}

Let us begin to prove Theorem \ref{thm:(Approximation-theorem)} by
using Lemma \ref{Lemma with zero}. 
\begin{proof}[Proof of Theorem \ref{thm:(Approximation-theorem)}]
 We have already shown that $U=0$ in $\overline{B_{+}^{n+1}(x_{0},R)}$.
Now, we will show $U=0$ in $\mathbb{R}_{+}^{n+1}\setminus\overline{B_{+}^{n+1}(x_{0},R)}$
also. Let us divide the case in two parts:

\textbf{Case 1.} $s\geq\dfrac{1}{2}$. Let us consider the the region
$D_{\epsilon}=\{(x,y)\,:\,x\in\mathbb{R}^{n}\mbox{ and }\epsilon<y<1/\epsilon\}$
for any $\epsilon>0$. Since the weight $y^{1-2s}$ is smooth and
positive in $\overline{D_{\epsilon}}$, thus $U$ can be realized
as a solution of a uniformly elliptic equation 
\begin{equation}
\nabla_{x,y}\cdot(y^{1-2s}\widetilde{A}(x)\nabla_{x,y}U)=0\mbox{ in }\mathbb{R}_{+}^{n+1}\label{eq:Proof of Thm 1.2}
\end{equation}
in $H^{1}(D_{\epsilon})$. Since $U$ also vanishes in $\overline{B_{+}^{n+1}(x_{0},R)}\cap D_{\epsilon}$,
where $\epsilon>0$ is chosen so small that this set is nonempty,
it follows by standard weak unique continuation property for the uniform
elliptic equation in a strip domain that $U$ has to vanish in entire
$D_{\epsilon}$. Since this is true for any $\epsilon>0$ small, one
has $U=0$ in $\mathbb{R}_{+}^{n+1}$ as required. Hence as a trace
of $U\in H^{1}(\mathbb{R}_{+}^{n+1},y^{1-2s})$, $U(x,0)=u(x)=0$
in $\mathbb{R}^{n}$.

\textbf{Case 2.} $s<\dfrac{1}{2}$. In order to establish our claim,
$U=0$ in $\mathbb{R}_{+}^{n+1}$ in this case, we write 
\begin{align}
 & \int_{\mathbb{R}_{+}^{n+1}}y^{1-2s}\widetilde{A}\nabla_{x,y}U\cdot\nabla_{x,y}Udx\,dy\nonumber \\
= & \lim_{R\to\infty}\int_{B_{+}^{n+1}(0,R)}y^{1-2s}\widetilde{A}\nabla_{x,y}U\cdot\nabla_{x,y}Udx\,dy\nonumber \\
= & \lim_{R\to\infty}\int_{\partial B_{+}^{n+1}(0;1,R)}y^{1-2s}(\widetilde{A}\nabla_{x,y}U\cdot\nu)UdS(x,y),\label{eq:limit1}
\end{align}
where $\partial B_{+}^{n+1}(0;1,R)=\partial B_{+}^{n+1}(0,R)\cup\partial B_{+}^{n+1}(0,1)\cup B^{0}(0;1,R)$
and $B^{0}(0;1,R)=\left\{ (x,0)\in\mathbb{R}^{n+1};1\leq|x|\leq R\right\} $.
Then using the fact $w=0$ on $\partial B_{+}^{n+1}(0,1)$ and $s<\dfrac{1}{2}$
gives the integrand in \eqref{eq:limit1} to be 0 on $B^{0}(0;1,R)$.
Hence, 
\begin{align}
 & \int_{\mathbb{R}_{+}^{n+1}}y^{1-2s}\widetilde{A}\nabla_{x,y}U\cdot\nabla_{x,y}Udx\,dy\nonumber \\
= & \lim_{R\to\infty}\int_{\partial B_{+}^{n+1}(0,R)}y^{1-2s}(\widetilde{A}\nabla_{x,y}U\cdot\nu)UdS(x,y)\nonumber \\
= & \lim_{R\to\infty}\int_{\mathbb{R}_{+}^{n+1}\backslash B_{+}^{n+1}(0,R)}y^{1-2s}\widetilde{A}\nabla_{x,y}U\cdot\nabla_{x,y}Udx\,dy=0\label{eq:limit}
\end{align}
since $U\in H^{1}(\mathbb{R}_{+}^{n+1},y^{1-2s})$. Thus, it follows
from \eqref{eq:Proof of Thm 1.2} and \eqref{eq:limit}, $U\equiv0$
in $\mathbb{R}_{+}^{n+1}$ and consequently, as a trace of $U\in H^{1}(\mathbb{R}_{+}^{n+1},y^{1-2s})$,
$U(x,0)=u(x)=0$ in $\mathbb{R}^{n}$. This completes the proof of
Theorem 1.2 for $H^{s}(\mathbb{R}^{n})$ class of functions. 
\end{proof}

\subsection{Runge approximation property}

We will utilize the Runge approximation property for solutions of
variable coefficients fractional operators. Recall that 
\[
X=H^{s}(\mathbb{R}^{n})/\widetilde{H}^{s}(\Omega)
\]
be a quotient space and if $q\in L^{\infty}(\Omega)$ satisfies the
eigenvalue condition \eqref{eq:eigenvalue condition}, we define the
operator $P_{q}$ by 
\begin{equation}
P_{q}:H^{s}(\Omega_{e})\to H^{s}(\mathbb{R}^{n}),\,f\mapsto u,\label{poisson_operator_definition}
\end{equation}
where $u\in H^{s}(\mathbb{R}^{n})$ is the unique solution of $(\mathcal{L}^{s}+q)u=0$
in $\Omega$ with $u-f\in\widetilde{H}^{s}(\Omega)$.
\begin{lem}
\label{lem:Approximation Lemma}Let $\Omega\subseteq\mathbb{R}^{n}$
be bounded open set with Lipschitz boundary and $A(x)$ be matrix-valued
function defined in $\mathbb{R}^{n}$ satisfying the hypothesis ($\mathcal{H}$).
Assume that $s\in(0,1)$ and $q\in L^{\infty}(\Omega)$ satisfy the
eigenvalue condition \eqref{eq:eigenvalue condition}. Let $\mathcal{O}$
be any open subset of $\Omega_{e}$. Consider the set 
\begin{align*}
\mathbb{D}=\{u|_{\Omega}\,;\,u=P_{q}f,\ f\in C_{c}^{\infty}(\mathcal{O})\}.
\end{align*}
Then $\mathbb{D}$ is dense in $L^{2}(\Omega)$.\end{lem}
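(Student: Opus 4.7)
\medskip

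\noindent\textbf{Proof proposal.} The plan is the standard Hahn--Banach duality argument, reducing density in $L^{2}(\Omega)$ to the strong uniqueness property (Theorem~\ref{thm:(Approximation-theorem)}) for $\mathcal{L}^{s}$. Concretely, I would argue by contradiction: if $\mathbb{D}$ were not dense in $L^{2}(\Omega)$, then by Hahn--Banach there exists $v\in L^{2}(\Omega)$ with $v\not\equiv0$ such that $\int_{\Omega}v\,w\,dx=0$ for every $w\in\mathbb{D}$, and the goal is to deduce $v\equiv0$.

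The key auxiliary object is the solution of the adjoint-type problem. Using Proposition~\ref{props:wellposedness} and the hypothesis that $q$ satisfies \eqref{eq:eigenvalue condition}, I would take $\phi\in\widetilde{H}^{s}(\Omega)$ to be the unique solution of
\[
(\mathcal{L}^{s}+q)\phi=v\ \text{ in }\Omega,\qquad \phi=0\ \text{ in }\Omega_{e},
\]
i.e.\ $\mathcal{B}_{q}(\phi,\psi)=\int_{\Omega}v\psi\,dx$ for all $\psi\in\widetilde{H}^{s}(\Omega)$, where we extend $v$ by $0$ outside $\Omega$. Note that $\mathrm{supp}(\phi)\subset\overline{\Omega}$, so in particular $\phi|_{\mathcal{O}}=0$.

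Now pick any $f\in C_{c}^{\infty}(\mathcal{O})$ and set $u=P_{q}f\in H^{s}(\mathbb{R}^{n})$, so $u-f\in\widetilde{H}^{s}(\Omega)$ and $\mathcal{B}_{q}(u,\psi)=0$ for all $\psi\in\widetilde{H}^{s}(\Omega)$. Using the symmetry of $\mathcal{B}_{q}$ and splitting $u=(u-f)+f$, I compute
\[
\int_{\Omega}v\,u\,dx=\mathcal{B}_{q}(\phi,u)=\mathcal{B}_{q}(u,\phi)=\mathcal{B}_{q}(u-f,\phi)+\mathcal{B}_{q}(f,\phi)=\mathcal{B}_{q}(f,\phi).
\]
Since $\mathrm{supp}(f)\subset\mathcal{O}\subset\Omega_{e}$ and $q$ is supported in $\Omega$, the potential contribution vanishes and $\mathcal{B}_{q}(f,\phi)=\langle\mathcal{L}^{s}f,\phi\rangle=\langle f,\mathcal{L}^{s}\phi\rangle$ by the symmetry \eqref{eq:LsSymmetric}. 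By the assumption $\int_{\Omega}v\,u\,dx=0$, I therefore obtain
\[
\int_{\mathcal{O}}f\,\mathcal{L}^{s}\phi\,dx=0\qquad\text{for all }f\in C_{c}^{\infty}(\mathcal{O}),
\]
which gives $\mathcal{L}^{s}\phi=0$ in $\mathcal{O}$.

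Combining $\phi|_{\mathcal{O}}=0$ with $\mathcal{L}^{s}\phi|_{\mathcal{O}}=0$, the strong uniqueness property (Theorem~\ref{thm:(Approximation-theorem)}), which requires precisely the smoothness hypothesis $(\mathcal{H})$ on $A$, forces $\phi\equiv0$ in $\mathbb{R}^{n}$. Consequently $v=(\mathcal{L}^{s}+q)\phi=0$, contradicting the choice of $v$, and $\mathbb{D}$ must be dense in $L^{2}(\Omega)$. The main conceptual obstacle is the verification that the pairing above really reduces to $\langle f,\mathcal{L}^{s}\phi\rangle$ on $\mathcal{O}$; everything else is a routine application of the bilinear form formalism of Section~\ref{Section 3}, and the whole argument rests on the (highly nontrivial) strong unique continuation result already proved.
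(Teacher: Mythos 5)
Your proposal is the paper's own argument: Hahn--Banach duality, the adjoint solution $\phi\in\widetilde{H}^{s}(\Omega)$ of $(\mathcal{L}^{s}+q)\phi=v$ in $\Omega$, reduction to $\phi|_{\mathcal{O}}=\mathcal{L}^{s}\phi|_{\mathcal{O}}=0$, and an appeal to Theorem~\ref{thm:(Approximation-theorem)}. The substance is right, but the displayed chain of equalities is garbled: the first link $\int_{\Omega}vu\,dx=\mathcal{B}_{q}(\phi,u)$ and the last link $\mathcal{B}_{q}(u-f,\phi)=0$ are each unjustified (and false in general); the first would already require $\mathcal{B}_{q}(\phi,f)=0$, which is what you are trying to prove. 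The two facts you actually have are: (i) $\mathcal{B}_{q}(u,\phi)=0$ outright, because $u=P_{q}f$ solves the homogeneous equation and $\phi\in\widetilde{H}^{s}(\Omega)$ is an admissible test function; and (ii) $\mathcal{B}_{q}(u-f,\phi)=\mathcal{B}_{q}(\phi,u-f)=\int_{\Omega}v\,(u-f)\,dx=\int_{\Omega}vu\,dx$, because $u-f\in\widetilde{H}^{s}(\Omega)$ is an admissible test function for the equation satisfied by $\phi$ and $f|_{\Omega}=0$. Combining (i) and (ii) with bilinearity gives $\mathcal{B}_{q}(f,\phi)=-\int_{\Omega}v\,(P_{q}f)\,dx$, i.e.\ the paper's identity \eqref{eq:formal adjoint relation} --- note the sign, opposite to yours. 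Since the orthogonality assumption makes the right-hand side vanish, your conclusion $\langle\mathcal{L}^{s}\phi,f\rangle=0$ for all $f\in C_{c}^{\infty}(\mathcal{O})$, hence $\mathcal{L}^{s}\phi=0$ in $\mathcal{O}$, and the rest of the argument all stand; only the bookkeeping in that one display needs to be repaired.
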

\begin{proof}
By the Hahn-Banach theorem, it is only need to show that for any $v\in L^{2}(\Omega)$
satisfying $(v,w)_{\Omega}=0$ for any $w\in\mathbb{D}$, then $v\equiv0$.
Let $v$ be a such function, which means $v$ satisfies 
\begin{equation}
(v,r_{\Omega}P_{q}f)=0,\mbox{ for any }f\in C_{c}^{\infty}(\mathcal{O}).\label{eq:111111}
\end{equation}
Now, let $\phi\in\widetilde{H^{s}}(\Omega)$ be the solution of $(\mathcal{L}^{s}+q)\phi=v$
in $\Omega$. We want to show that for any $f\in C_{c}^{\infty}(\mathcal{O})$,
the following relation 
\begin{equation}
\mathcal{B}_{q}(\phi,f)=-(v,r_{\Omega}P_{q}f)_{\Omega}\label{eq:formal adjoint relation}
\end{equation}
holds. In other words, $\mathcal{B}_{q}(\phi,w)=(v,r_{\Omega}w)$
for any $w\in\widetilde{H^{s}}(\Omega)$. To prove \eqref{eq:formal adjoint relation},
we denote $u_{f}=P_{q}f\in H^{s}(\mathbb{R}^{n})$ with $f\in C_{c}^{\infty}(\mathcal{O})$
such that $f-u_{f}\in\widetilde{H^{s}}(\Omega)$, then we have 
\[
\mathcal{B}_{q}(\phi,f)=\mathcal{B}_{q}(\phi,f-u_{f})=(v,r_{\Omega}(f-u_{f}))_{\Omega}=-(v,r_{\Omega}P_{q}f)_{\Omega},
\]
where we have used the facts that $u_{f}$ is a solution and $\phi\in\widetilde{H^{s}}(\Omega)$.
Note that \eqref{eq:111111} and \eqref{eq:formal adjoint relation}
imply that 
\[
\mathcal{B}_{q}(\phi,f)=0\mbox{ for any }f\in C_{c}^{\infty}(\mathcal{O}).
\]
Moreover, we know that $r_{\Omega}f=0$ because $f\in C_{c}^{\infty}(\mathcal{O})$
and we can derive 
\[
(\mathcal{L}^{s}\phi,f)_{\mathbb{R}^{n}}=0\mbox{ for any }f\in C_{c}^{\infty}(\mathcal{O}).
\]
In the end, we know that $\phi\in H^{s}(\mathbb{R}^{n})$ which satisfies
\[
\phi|_{\mathcal{O}}=\mathcal{L}^{s}\phi|_{\mathcal{O}}=0.
\]
By Theorem \ref{thm:(Approximation-theorem)}, we obtain $\phi\equiv0$
and then $v\equiv0$.\end{proof}
\begin{rem}
We also refer readers to \cite{lax1956stability} for more details
of the Runge approximation property for the (local) differential equations. 
\end{rem}

\section{Proof of Theorem \ref{thm: Main}\label{Section 6}}

Now, we are ready to prove the global uniqueness result for variable
coefficients fractional operators. Even though the proof is similar
as the proof in \cite{ghosh2016calder}, we still give a proof for
the completeness.
\begin{proof}[Proof of Theorem \eqref{thm: Main}]
 If $\Lambda_{q_{1}}g|_{\mathcal{O}_{2}}=\Lambda_{q_{2}}g|_{\mathcal{O}_{2}}$
for any $g\in C_{c}^{\infty}(\mathcal{O}_{1})$, where $\mathcal{O}_{1}$
and $\mathcal{O}_{2}$ are open subsets of $\Omega_{e}$, by the integral
identity in Lemma \ref{lem:(Integral-identity)-Let}, we have 
\[
\int_{\Omega}(q_{1}-q_{2})u_{1}u_{2}dx=0
\]
where $u_{1},u_{2}\in H^{s}(\mathbb{R}^{n})$ solve $(\mathcal{L}^{s}+q_{1})u_{1}=0$
and $(\mathcal{L}^{s}+q_{2})u_{2}=0$ in $\Omega$ with $u_{1}$,
$u_{2}$ having exterior values $g_{j}\in C_{c}^{\infty}(\mathcal{O}_{j})$,
for $j=1,2$.

Let $f\in L^{2}(\Omega)$, and use the approximation lemma \ref{lem:Approximation Lemma},
then there exist two sequences $(u_{j}^{1})$, $(u_{j}^{2})$ of functions
in $H^{s}(\mathbb{R}^{n})$ that satisfy 
\begin{align*}
 & (\mathcal{L}^{s}+q_{1})u_{j}^{1}=(\mathcal{L}^{s}+q_{2})u_{j}^{2}=0\text{ in \ensuremath{\Omega}},\\
 & \mbox{supp}(u_{j}^{1})\subseteq\overline{\Omega_{1}}\mbox{ and }\mbox{supp}(u_{j}^{2})\subseteq\overline{\Omega_{2}},\\
 & r_{\Omega}u_{j}^{1}=f+r_{j}^{1},\ \ r_{\Omega}u_{j}^{2}=1+r_{j}^{2},
\end{align*}
where $\Omega_{1}$, $\Omega_{2}$ are two open subsets of $\mathbb{R}^{n}$
containing $\Omega$, and $r_{j}^{1},r_{j}^{2}\to0$ in $L^{2}(\Omega)$
as $j\to\infty$. Plug these solutions into the integral identity
and pass the limit as $j\to\infty$, then we infer that 
\[
\int_{\Omega}(q_{1}-q_{2})fdx=0.
\]
Since $f\in L^{2}(\Omega)$ was arbitrary, we conclude that $q_{1}=q_{2}$. 
\end{proof}

\section{Appendix}

At the end of this paper, we present some required materials to complete
our paper.

\subsection{Stability result for the degenerate problem}

In general, we have the following result.
\begin{lem}
\label{Appendix Lemma 1}Let $h$ be a vector-valued function satisfying
$\dfrac{G}{y^{1-2s}}\in L^{2}(\mathbb{R}_{+}^{n+1},y^{1-2s})$, then
the following Dirichlet boundary value problem 
\begin{equation}
\begin{cases}
\nabla_{x,y}\cdot(y^{1-2s}\widetilde{A}(x)\nabla_{x,y}V)=\nabla_{x,y}\cdot G\mbox{ in }\mathbb{R}^{n+1},\\
V(x,0)=0\mbox{ on }\mathbb{R}^{n}
\end{cases}\label{New degenerate elliptic problem}
\end{equation}
has a unique weak solution in $H^{1}(\mathbb{R}_{+}^{n+1},y^{1-2s})$
satisfying 
\begin{equation}
\|V\|_{H^{1}(\mathbb{R}_{+}^{n+1},y^{1-2s})}\leq C\|y^{-1+2s}G\|_{L^{2}(\mathbb{R}_{+}^{n+1},y^{1-2s})},\label{eq:stabilityA}
\end{equation}
where the constant $C>0$ is independent of $G$ and $V$. 
\end{lem}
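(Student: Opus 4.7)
The plan is to solve (\ref{New degenerate elliptic problem}) variationally by the Lax--Milgram theorem. First I would set up the Hilbert space
\[
\mathcal{H}:=\{V\in H^{1}(\mathbb{R}_{+}^{n+1},y^{1-2s}):\;\mathrm{tr}_{y=0}V=0\},
\]
which coincides with the closure of $C_{c}^{\infty}(\mathbb{R}_{+}^{n+1})$ in $H^{1}(\mathbb{R}_{+}^{n+1},y^{1-2s})$ (using the same trace characterization already invoked in Lemma~\ref{Lem unique extension}). Since $y^{1-2s}\in A_{2}$, the combination of a weighted Hardy inequality in the $y$-direction (which uses the vanishing trace at $\{y=0\}$) together with the weighted Sobolev embedding of Fabes--Kenig--Serapioni yields a Poincar\'e-type inequality
\[
\|V\|_{L^{2}(\mathbb{R}_{+}^{n+1},y^{1-2s})}\leq C\,\|\nabla_{x,y}V\|_{L^{2}(\mathbb{R}_{+}^{n+1},y^{1-2s})},\qquad V\in\mathcal{H},
\]
so that $\|\nabla_{x,y}\cdot\|_{L^{2}(y^{1-2s})}$ is an equivalent norm on $\mathcal{H}$.

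Next I would consider the bilinear form
\[
\mathcal{A}(V,\phi):=\int_{\mathbb{R}_{+}^{n+1}}y^{1-2s}\,\widetilde{A}(x)\nabla_{x,y}V\cdot\nabla_{x,y}\phi\,dxdy
\]
together with the linear functional $F(\phi):=-\int_{\mathbb{R}_{+}^{n+1}}G\cdot\nabla_{x,y}\phi\,dxdy$ on $\mathcal{H}$. Boundedness and coercivity of $\mathcal{A}$ follow directly from the ellipticity bounds (\ref{eq:ellipticity and symmetry condition}) on $\widetilde{A}$. For $F$ I would use the splitting $G\cdot\nabla_{x,y}\phi=(y^{(2s-1)/2}G)\cdot(y^{(1-2s)/2}\nabla_{x,y}\phi)$ and Cauchy--Schwarz to obtain
\[
|F(\phi)|\leq\|y^{-1+2s}G\|_{L^{2}(\mathbb{R}_{+}^{n+1},y^{1-2s})}\,\|\nabla_{x,y}\phi\|_{L^{2}(\mathbb{R}_{+}^{n+1},y^{1-2s})}.
\]

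Then Lax--Milgram produces a unique $V\in\mathcal{H}$ satisfying $\mathcal{A}(V,\phi)=F(\phi)$ for every $\phi\in\mathcal{H}$, which is precisely a weak solution to (\ref{New degenerate elliptic problem}), together with the estimate $\|\nabla_{x,y}V\|_{L^{2}(y^{1-2s})}\leq C\|y^{-1+2s}G\|_{L^{2}(y^{1-2s})}$; combined with the Poincar\'e-type inequality above, this gives (\ref{eq:stabilityA}). The main obstacle is exactly the weighted Poincar\'e/Hardy inequality on the \emph{unbounded} half-space: on bounded Lipschitz subdomains it is standard, but on all of $\mathbb{R}_{+}^{n+1}$ one must genuinely exploit both the vanishing trace at $\{y=0\}$ and the $A_{2}$ structure of the weight (either by a direct Hardy argument along $y$ combined with the weighted Sobolev inequality of Fabes--Kenig--Serapioni, or by exhaustion with bounded domains and constants uniform in their size). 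Everything else in the scheme is routine once the right energy space has been identified.
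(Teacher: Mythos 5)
Your scheme is essentially the paper's: the paper minimizes the symmetric Dirichlet functional
\[
J(\Psi)=\int_{\mathbb{R}_{+}^{n+1}}y^{1-2s}\widetilde{A}\nabla_{x,y}\Psi\cdot\nabla_{x,y}\Psi\,dxdy-\int_{\mathbb{R}_{+}^{n+1}}y^{-1+2s}G\cdot y^{1-2s}\nabla_{x,y}\Psi\,dxdy
\]
over $H_{0}^{1}(\mathbb{R}_{+}^{n+1},y^{1-2s})$ by the direct method (lower bound via Young, minimizing sequence, weak lower semicontinuity, uniqueness from the vanishing of the quadratic form of the difference), which for a symmetric coercive form is the same argument as Lax--Milgram; your Cauchy--Schwarz bound on the source term and the choice of test function $\Psi=V$ to get the gradient estimate are identical to the paper's.

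The one substantive point is the issue you yourself flag as ``the main obstacle,'' and here your proposed resolution does not work. The global weighted Poincar\'e inequality
\[
\|V\|_{L^{2}(\mathbb{R}_{+}^{n+1},y^{1-2s})}\leq C\|\nabla_{x,y}V\|_{L^{2}(\mathbb{R}_{+}^{n+1},y^{1-2s})}
\]
is \emph{false} on the unbounded half-space: for $V_{\lambda}(z):=V(z/\lambda)$ one computes $\|V_{\lambda}\|_{L^{2}(y^{1-2s})}^{2}=\lambda^{n+2-2s}\|V\|_{L^{2}(y^{1-2s})}^{2}$ while $\|\nabla V_{\lambda}\|_{L^{2}(y^{1-2s})}^{2}=\lambda^{n-2s}\|\nabla V\|_{L^{2}(y^{1-2s})}^{2}$, so the ratio blows up as $\lambda\to\infty$, trace condition or not. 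The weighted Hardy inequality in $y$ only controls $\int y^{-1-2s}|V|^{2}$, which dominates $\int y^{1-2s}|V|^{2}$ merely on $\{y\leq1\}$, and the Fabes--Kenig--Serapioni Sobolev embedding gains an integrability exponent (with radius-dependent constants), not the missing weight; exhaustion by bounded domains likewise cannot produce a scale-invariant constant. Consequently your bilinear form is coercive only for the gradient seminorm, and Lax--Milgram (equivalently, the paper's minimization) delivers $\|\nabla_{x,y}V\|_{L^{2}(y^{1-2s})}\leq C\|y^{-1+2s}G\|_{L^{2}(y^{1-2s})}$ but not, by this route, the $L^{2}(y^{1-2s})$ piece of the stated norm. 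To be fair, the paper's own proof has exactly the same gap --- it takes $\Psi=V$ in the weak formulation and asserts the full $H^{1}$ bound --- so the correct reading of the lemma is in the homogeneous space $\dot H^{1}(\mathbb{R}_{+}^{n+1},y^{1-2s})$ (gradient seminorm), in which both your argument and the paper's are complete; you should either state the estimate in that seminorm or supply an additional argument for the $L^{2}$ part rather than appeal to a Poincar\'e inequality that does not hold.
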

By the weak solution of \eqref{New degenerate elliptic problem} we
mean $V\in H^{1}(\mathbb{R}_{+}^{n+1},y^{1-2s})$ solves 
\begin{equation}
\int_{\mathbb{R}_{+}^{n+1}}y^{1-2s}\widetilde{A}(x)\nabla_{x,y}V\cdot\nabla_{x,y}\Psi\,dxdy=\int_{\mathbb{R}_{+}^{n+1}}y^{-1+2s}G\cdot y^{1-2s}\nabla\Psi\,dxdy,\label{eq:weakA}
\end{equation}
for all $\Psi\in H_{0}^{1}(\mathbb{R}_{+}^{n+1},y^{1-2s})$.
\begin{proof}[Proof of Lemma \ref{Appendix Lemma 1}]
Let us consider the Dirichlet functional $J:H_{0}^{1}(\mathbb{R}_{+}^{n+1},y^{1-2s})\to\mathbb{R}_{+}$
as 
\begin{equation}
J(\Psi):=\int_{\mathbb{R}_{+}^{n+1}}y^{1-2s}\widetilde{A}(x)\nabla_{x,y}\Psi\cdot\nabla_{x,y}\Psi\,dxdy-\int_{\mathbb{R}_{+}^{n+1}}y^{-1+2s}G\cdot y^{1-2s}\nabla\Psi,\label{Energy functional}
\end{equation}
If $V\in H_{0}^{1}(\mathbb{R}_{+}^{n+1},y^{1-2s})$ is an extremum
of $J(\Psi)$ in $H_{0}^{1}(\mathbb{R}_{+}^{n+1},y^{1-2s})$, then
for any $\Psi\in C_{c}^{\infty}(\mathbb{R}_{+}^{n+1})$, as a function
of $\eta$, 
\[
F(\eta):=J(V+\eta\Psi)
\]
attains its extremum at $\eta=0$ and hence $F^{\prime}(0)=0$ as
\[
\begin{split}F^{\prime}(0)= & \lim_{\eta\to0}\dfrac{J(V+\eta\Psi)-J(V)}{\eta}\\
= & 2\int_{\mathbb{R}_{+}^{n+1}}y^{1-2s}\widetilde{A}(x)\nabla_{x,y}V\cdot\nabla_{x,y}\Psi\,dxdy\\
 & -2\int_{\mathbb{R}_{+}^{n+1}}y^{-1+2s}h\cdot y^{1-2s}\nabla\Psi\,dxdy\\
= & 0,
\end{split}
\]
which gives the definition of the weak solution. As we can see from
the definition (\ref{Energy functional}) 
\begin{align*}
J(\Psi) & \geq\frac{1}{2}\int_{\mathbb{R}_{+}^{n+1}}y^{1-2s}|\nabla\Psi|^{2}\,dxdy-\frac{1}{2}\int_{\mathbb{R}_{+}^{n+1}}y^{-1+2s}|h|^{2}\,dxdy\\
 & \geq-\frac{1}{2}\int_{\mathbb{R}_{+}^{n+1}}y^{-1+2s}|h|^{2}\,dxdy
\end{align*}
that means $J(\Psi)$ is bounded from below in $H_{0}^{1}(\mathbb{R}_{+}^{n+1},y^{1-2s})$.

Therefore, $\inf_{H_{0}^{1}(\mathbb{R}_{+}^{n+1},y^{1-2s})}J(\Psi)$
is a finite number. Hence, there exists a minimizing sequence $\{\Psi_{k}\}_{k=1}^{\infty}\subset H_{u}^{1}(\mathbb{R}_{+}^{n+1},y^{1-2s})$
such that 
\[
\lim_{k\to\infty}J(\Psi_{k})=\inf_{H_{0}^{1}(\mathbb{R}_{+}^{n+1},y^{1-2s})}J(\Psi).
\]
Next we observe that, the functional turns out to be weakly lower
semi-continuous over its domain of definition, i.e. 
\[
J(\Psi)\leq\liminf_{k\to\infty}J(\Psi_{k}),\quad\mbox{if }\Psi_{k}\rightharpoonup\Psi\mbox{ weakly in }H_{0}^{1}(\mathbb{R}_{+}^{n+1},y^{1-2s}).
\]
This simply follows as $\mbox{if }\Psi_{k}\rightharpoonup\Psi\mbox{ weakly in }H_{0}^{1}(\mathbb{R}_{+}^{n+1},y^{1-2s})$
then 
\begin{align*}
 & \int_{\mathbb{R}_{+}^{n+1}}y^{1-2s}\widetilde{A}(x)\nabla_{x,y}\Psi\cdot\nabla_{x,y}\Psi\,dxdy\\
\leq & \liminf_{k\to\infty}\int_{\mathbb{R}_{+}^{n+1}}y^{1-2s}\widetilde{A}(x)\nabla_{x,y}\Psi_{k}\cdot\nabla_{x,y}\Psi_{k}\,dxdy.
\end{align*}
Thus, if $\{\Psi_{k}\}_{k=1}^{\infty}$ is a minimizing sequence,
i.e. , if 
\[
J(\Psi_{k})\to\inf_{H_{0}^{1}(\mathbb{R}_{+}^{n+1},y^{1-2s})}J(V)
\]
then there exists a subsequence $\{\Psi_{k_{j}}\}_{j=1}^{\infty}$
such that $\Psi_{k_{j}}\rightharpoonup V$ weakly in $H_{0}^{1}(\mathbb{R}_{+}^{n+1},y^{1-2s})$
and hence 
\[
\inf_{H_{0}^{1}(\mathbb{R}_{+}^{n+1},y^{1-2s})}J(\Psi)\leq J(V)\leq\liminf_{k\to\infty}J(\Psi_{k})=\inf_{H_{0}^{1}(\mathbb{R}_{+}^{n+1},y^{1-2s})}J(\Psi),
\]
Therefore, $J(V)=\inf_{H_{0}^{1}(\mathbb{R}_{+}^{n+1},y^{1-2s})}J(\Psi)$
and we achieve our goal.

Next, we claim that $V\in H_{0}^{1}(\mathbb{R}_{+}^{n+1},y^{1-2s})$
is the unique minimizer of $J(\Psi)$. Assume that $V_{1}$, $V_{2}\in H_{0}^{1}(\mathbb{R}_{+}^{n+1},y^{1-2s})$
are weak solutions of \eqref{New degenerate elliptic problem}, then
$V_{1}-V_{2}\in H_{0}^{1}(\mathbb{R}_{+}^{n+1},y^{1-2s})$ satisfies
the following integral identity 
\[
\int_{\mathbb{R}_{+}^{n+1}}y^{1-2s}\widetilde{A}(x)\nabla_{x,y}(V_{1}-V_{2})\cdot\nabla_{x,y}(V_{1}-V_{2})\,dxdy=0,
\]
which implies that $V_{1}=V_{2}$. This shows that \eqref{New degenerate elliptic problem}
has a unique weak solution in $H_{0}^{1}(\mathbb{R}_{+}^{n+1},y^{1-2s})$.
The remaining stability estimate \eqref{New degenerate elliptic problem}
simply follows from \eqref{eq:weakA} by taking $\Psi=V$ there, to
have 
\[
\|V\|_{H^{1}(\mathbb{R}_{+}^{n+1},y^{1-2s})}\leq C\|y^{-1+2s}G\|_{L^{2}(\mathbb{R}_{+}^{n+1},y^{1-2s})}
\]
for some constant $C>0$. \end{proof}
\begin{lem}
\label{lem:Pyintegral} Let $P_{y}^{s}$ be the Poisson kernel given
by \eqref{eq:representation of Poisson kernel}. Then 
\begin{equation}
\lim_{y\rightarrow0^{+}}\int_{\mathbb{R}^{n}}P_{y}^{s}(x,z)dz=1,\quad x\in\mathbb{R}^{n},\label{eq:Pyto0}
\end{equation}
and 
\begin{equation}
\int_{\mathbb{R}^{n}}P_{y}^{s}(x,z)dz=1,\quad x\in\mathbb{R}^{n},\,y>0.\label{eq:Py>0}
\end{equation}
\end{lem}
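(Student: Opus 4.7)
The plan is to prove the second equality first by an explicit computation, and deduce the limit in the first equality as an immediate corollary.

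First, I would establish the \emph{stochastic completeness} of the semigroup $\{e^{-t\mathcal{L}}\}_{t\ge0}$, namely
\[
\int_{\mathbb{R}^n} p_t(x,z)\,dz = 1, \qquad x\in\mathbb{R}^n,\ t>0.
\]
Since $\mathcal{L} = -\nabla\cdot(A(x)\nabla)$ is a divergence-form uniformly elliptic operator on all of $\mathbb{R}^n$ with smooth coefficients, one has $\mathcal{L}\mathbf{1} = 0$ and the associated heat semigroup conserves the constant function $\mathbf{1}$; this is standard and can be quoted from \cite{grigoryan2009heat}. Integrability in $z$ is guaranteed by the Gaussian upper bound in \eqref{eq:pointwise estimates for p_t}.

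Next I would apply Fubini's theorem (again justified by the Gaussian bound, which gives absolute integrability of $e^{-y^2/(4t)}p_t(x,z)t^{-1-s}$ on $(0,\infty)\times\mathbb{R}^n$ for each fixed $y>0$) to exchange the order of integration in the defining formula \eqref{eq:representation of Poisson kernel}:
\[
\int_{\mathbb{R}^n} P_y^s(x,z)\,dz
= \frac{y^{2s}}{4^{s}\Gamma(s)}\int_0^{\infty} e^{-\frac{y^2}{4t}}\left(\int_{\mathbb{R}^n} p_t(x,z)\,dz\right)\frac{dt}{t^{1+s}}
= \frac{y^{2s}}{4^{s}\Gamma(s)}\int_0^{\infty} e^{-\frac{y^2}{4t}}\frac{dt}{t^{1+s}}.
\]

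Then, with the change of variables $u = y^2/(4t)$, so that $dt/t^{1+s} = -(4^{s}/y^{2s})u^{s-1}\,du$ and the limits swap, the remaining one-dimensional integral becomes
\[
\int_0^{\infty} e^{-\frac{y^2}{4t}}\frac{dt}{t^{1+s}}
= \frac{4^{s}}{y^{2s}}\int_0^{\infty} e^{-u}u^{s-1}\,du
= \frac{4^{s}\Gamma(s)}{y^{2s}}.
\]
Substituting back gives $\int_{\mathbb{R}^n} P_y^s(x,z)\,dz = 1$, proving \eqref{eq:Py>0}. The limit \eqref{eq:Pyto0} then follows trivially, since the integral is independent of $y$.

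The main potential obstacle is the stochastic completeness $\int p_t(x,z)\,dz=1$, which is not a direct consequence of the two-sided Gaussian bounds alone but relies on the structural property $\mathcal{L}\mathbf{1}=0$ together with the self-adjointness of $\mathcal{L}$ on $L^2(\mathbb{R}^n)$; once this is granted, the rest is a clean Fubini-plus-Gamma-function computation.
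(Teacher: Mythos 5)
Your proof is correct and follows essentially the same route as the paper's: both rest on the stochastic completeness $\int_{\mathbb{R}^n}p_t(x,z)\,dz=1$ of the heat semigroup together with the Gamma-function identity $\int_0^\infty e^{-y^2/(4t)}t^{-1-s}\,dt=4^s\Gamma(s)y^{-2s}$. Your version is in fact slightly cleaner, since you swap the integrals directly by Tonelli rather than via the paper's $\epsilon$-truncation around $z=x$, and you obtain \eqref{eq:Pyto0} as a trivial consequence of \eqref{eq:Py>0} instead of quoting it from Stinga--Torrea.
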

\begin{proof}
The limit \eqref{eq:Pyto0} is verified in \cite[Theorem 2.1]{stinga2010extension}.
We only need to show \eqref{eq:Py>0}. The following identity holds
\[
\int_{0}^{\infty}e^{-\frac{y^{2}}{4t}}\frac{e^{-b|x-z|^{2}/t}}{t^{n/2}}\frac{dt}{t^{1+s}}=\frac{c(n,s,b)}{\left(b|x-z|^{2}+y^{2}\right)^{\frac{n+2s}{2}}},
\]
such that 
\begin{equation}
\frac{c_{1}y^{2s}}{\left(b_{1}|x-z|^{2}+y^{2}\right)^{\frac{n+2s}{2}}}\leq P_{y}^{s}(x,z)\leq\frac{c_{2}y^{2s}}{\left(b_{2}|x-z|^{2}+y^{2}\right)^{\frac{n+2s}{2}}},\label{eq:Estimate for Poisson}
\end{equation}
with some positive constant $b_{j},c_{j}$, for $j=1,2$. It is obtained
by applying the estimate \eqref{eq:Estimate for Poisson} that 
\begin{eqnarray}
\lim_{\epsilon\rightarrow0^{+}}\int_{|z-x|>\epsilon}P_{y}^{s}(x,z)dz & = & \lim_{\epsilon\rightarrow0^{+}}\int_{|z-x|>\epsilon}\frac{y^{2s}}{4^{s}\Gamma(s)}\int_{0}^{\infty}e^{-\frac{y^{2}}{4t}}p_{t}(x,z)\dfrac{dt}{t^{1+s}}dz\nonumber \\
 & = & \lim_{\epsilon\rightarrow0^{+}}\int_{0}^{\infty}\int_{|z-x|>\epsilon}\frac{y^{2s}}{4^{s}\Gamma(s)}e^{-\frac{y^{2}}{4t}}p_{t}(x,z)\,dz\dfrac{dt}{t^{1+s}}\nonumber \\
 & = & \int_{0}^{\infty}\frac{y^{2s}}{4^{s}\Gamma(s)}e^{-\frac{y^{2}}{4t}}\dfrac{dt}{t^{1+s}}\label{eq:ProofPy}\\
 &  & -\lim_{\epsilon\rightarrow0^{+}}\int_{0}^{\infty}\int_{|z-x|\le\epsilon}\frac{y^{2s}}{4^{s}\Gamma(s)}e^{-\frac{y^{2}}{4t}}p_{t}(x,z)\,dz\dfrac{dt}{t^{1+s}}\nonumber \\
 & = & \,I_{1}-\frac{1}{4^{s}\Gamma(s)}\lim_{\epsilon\rightarrow0^{+}}I_{\epsilon}(y),\nonumber 
\end{eqnarray}
where we have used the fact that the heat kernel $p_{t}(x,z)$  satisfies
 $\int_{\mathbb{R}^{n}}p_{t}(x,z)dz=1$. We have from the Gamma function
that the integral $I_{1}=1$, providing $s\in(0,1)$. We claim that
$\lim_{\epsilon\rightarrow0^{+}}I_{\epsilon}(y)=0$ for any $y>0$.
In fact, by \eqref{eq:pointwise estimates for p_t} one has

\begin{eqnarray*}
I_{\epsilon}(y) & \leq & c\int_{0}^{\infty}y^{2s}e^{-\frac{y^{2}}{4t}}\int_{|z-x|\le\epsilon}e^{-b\frac{|x-z|^{2}}{t}}\,dz\,\frac{dt}{t^{1+s+n/2}}\\
 & = & c\int_{0}^{\infty}y^{2s}e^{-\frac{y^{2}}{4t}}\int_{B_{\epsilon}}e^{-b\frac{|z|^{2}}{t}}\,dz\,\frac{dt}{t^{1+s+n/2}}\\
 & = & 4\pi c\int_{0}^{\infty}y^{2s}e^{-\frac{y^{2}}{4t}}\int_{0}^{\epsilon}e^{-b\frac{r^{2}}{t}}\,dr\,\frac{dt}{t^{1+s+n/2}}\\
 & = & 4\pi c_{1}\int_{0}^{\epsilon}\frac{y^{2s}}{\left(br^{2}+y^{2}\right)^{s+n/2}}\,dr.
\end{eqnarray*}
Therefore, one can pass the limit $\epsilon\rightarrow0^{+}$ in \eqref{eq:ProofPy}
and thus obtain \eqref{eq:Py>0}. 
\end{proof}

\subsection{Almgren's type frequency function and the doubling inequality for
the degenerate problem}

\label{SUCP} Here we mention the strong unique continuation property
for the degenerate problem $\nabla_{x,y}\cdot(|y|^{1-2s}\widetilde{A}(x)\nabla_{x,y}\widetilde{U})=0$
in $B^{n+1}(0,1)$. The proof relies on the technique in using the
Almgren's frequency function method, which was introduced by Yu \cite{yu2016unique}.

To simplify the notation, let us denote $B_{r}^{n+1}:=B^{n+1}(0,r)$
and $z=(x,y)\in\mathbb{R}^{n+1}$. For $z\neq0$, we define 
\[
\mu(z):=\dfrac{\left(\widetilde{A}(z)z\right)\cdot z}{|z|^{2}}\in\mathbb{R}\mbox{ and }\overrightarrow{\beta}(z):=\dfrac{\widetilde{A}(z)z}{\mu(z)}\in\mathbb{R}^{n+1},
\]
then from the ellipticity condition (\ref{eq:ellipticity and symmetry condition}),
it is easy to see that 
\[
\widetilde{\Lambda}^{-1}\leq\mu(z)\leq\widetilde{\Lambda}\mbox{ and }|\overrightarrow{\beta}(z)|\leq\widetilde{\Lambda}|z|\mbox{ for all }z\in\mathbb{R}^{n+1}
\]
for some universal constant $\widetilde{\Lambda}>0$. In addition,
by the standard coordinates transformation technique, we may assume
that $\widetilde{A}(0)=I_{n+1}$, which is an $(n+1)\times(n+1)$
identity matrix, then we have the following estimates hold for $\mu(z)$
and $\overrightarrow{\beta}(z)=(\beta_{1}(z),\beta_{2}(z),\cdots,\beta_{n+1}(z))$:
\begin{equation}
\left|\dfrac{\partial}{\partial r}\mu(rz)\right|\leq C\mbox{ for }r>0\mbox{ and }\dfrac{\partial\beta_{i}}{\partial z_{j}}(z)=\delta_{ij}+O(|z|),\label{Estimates for mu and beta}
\end{equation}
where $\delta_{ij}$ is the Kronecker delta and the constant $C>0$
depends on $\widetilde{A}(z)=(\widetilde{a}_{jk}(z))_{j,k=1}^{n+1}$.
The estimates (\ref{Estimates for mu and beta}) were proved in \cite{tao2008weighted,yu2016unique},
so we skip the details.

Let $\widetilde{U}\in H^{1}(\mathbb{R}_{+}^{n+1},|y|^{1-2s})$ and
consider 
\begin{eqnarray}
H(r) & := & \int_{\partial B_{r}^{n+1}}|y|^{1-2s}\mu(z)|\widetilde{U}(z)|^{2}dS(z),\label{eq:H(r)}\\
D(r) & := & \int_{B_{r}^{n+1}}|y|^{1-2s}\left(\widetilde{A}(z)\nabla\widetilde{U}\right)\cdot\nabla\widetilde{U}dz,\label{eq:D(r)}
\end{eqnarray}
where $\nabla:=\nabla_{z}=\nabla_{x,y}$ in $\mathbb{R}^{n+1}$ and
it is easy to see that $H(r)$ exists for almost every $r>0$ as a
surface integral, since the volume integral ($\int_{0}^{R}H(r)dr<\infty$)
exists due to $\widetilde{U}\in H^{1}(\mathbb{R}_{+}^{n+1},|y|^{1-2s})$).
Next, similar to \cite{lin1991nodal,tao2008weighted,yu2016unique},
we define the corresponding \textit{Almgren's frequency function}
by 
\[
N(r):=\dfrac{rD(r)}{H(r)},
\]
and we have the following lemmas.
\begin{lem}
For any $r\in(0,1)$, $H(r)=0$ whenever $\widetilde{U}\equiv0$ in
$B_{r}^{n+1}$. \end{lem}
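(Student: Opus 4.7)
The plan is to combine the interior regularity for the degenerate elliptic equation with the pointwise vanishing of $\widetilde{U}$ on $B_r^{n+1}$, so that pointwise values on the sphere $\partial B_r^{n+1}$ are meaningful and are immediately forced to be zero.

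First, in the context of this appendix $\widetilde{U}$ is a weak solution of $\nabla_{x,y}\cdot(|y|^{1-2s}\widetilde{A}(x)\nabla_{x,y}\widetilde{U})=0$ in $B^{n+1}(0,1)$, and $|y|^{1-2s}$ is an $A_{2}$ Muckenhoupt weight. By Proposition~\ref{Prop Holder regularity for degenerate}(b), $\widetilde{U}$ admits a H\"older continuous representative on every open set compactly contained in $B^{n+1}(0,1)$. Since $r\in(0,1)$ we have $\overline{B_r^{n+1}}\subset B^{n+1}(0,1)$, so this continuous representative, which we continue to denote by $\widetilde{U}$, is well defined pointwise on $\overline{B_r^{n+1}}$.

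Next, the hypothesis $\widetilde{U}\equiv 0$ in the open ball $B_r^{n+1}$ together with the just-established continuity of $\widetilde{U}$ on $\overline{B_r^{n+1}}$ forces $\widetilde{U}(z)=0$ for every $z\in\overline{B_r^{n+1}}$, and in particular for every $z\in\partial B_r^{n+1}$. Consequently the integrand in the definition \eqref{eq:H(r)} of $H(r)$ vanishes identically on $\partial B_r^{n+1}$, and therefore $H(r)=0$, as claimed.

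The only subtlety worth pointing out is that $\widetilde{U}$ is a priori merely an element of $H^{1}(B^{n+1}(0,1),|y|^{1-2s})$, hence defined only up to a set of measure zero, so pointwise values on the measure-zero sphere $\partial B_r^{n+1}$ are not automatically meaningful. This is exactly what the H\"older regularity of Proposition~\ref{Prop Holder regularity for degenerate}(b) resolves; it applies because $|y|^{1-2s}$ is $A_{2}$ and $\widetilde{A}$ is smooth (and, after the standard coordinate normalization, $\widetilde{A}(0)=I_{n+1}$). Beyond verifying that the regularity theorem is indeed applicable at the scale under consideration, no real obstacle arises, and the conclusion follows directly.
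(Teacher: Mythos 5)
Your argument is internally correct, but it establishes the converse of what the paper actually proves, and the direction you prove is not the substantive content of the lemma. Reading the statement literally as ``$\widetilde{U}\equiv0$ in $B_{r}^{n+1}$ implies $H(r)=0$'', your proof via the interior H\"older regularity of Proposition \ref{Prop Holder regularity for degenerate}(b) is fine: the continuous representative of $\widetilde{U}$ vanishes on $\overline{B_{r}^{n+1}}$, hence on the sphere $\partial B_{r}^{n+1}$, hence the surface integral defining $H(r)$ vanishes. But that implication is essentially trivial. The paper's proof runs in the opposite direction: it assumes $H(r)=0$, deduces from the strict positivity of $\mu$ and of the weight $|y|^{1-2s}$ (a.e.\ on the sphere) that $\widetilde{U}=0$ on $\partial B_{r}^{n+1}$, and then invokes the uniqueness of the solution of the Dirichlet problem for the degenerate equation $\nabla_{x,y}\cdot(|y|^{1-2s}\widetilde{A}(x)\nabla_{x,y}\widetilde{U})=0$ in $B_{r}^{n+1}$ with zero boundary data (the Fabes--Kenig--Serapioni theory cited in the Appendix) to conclude $\widetilde{U}\equiv0$ in the whole ball.

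The direction $H(r)=0\Rightarrow\widetilde{U}\equiv0$ is the one the subsequent Almgren/doubling argument relies on: contrapositively, it guarantees $H(r)>0$ for every $r$ when $\widetilde{U}\not\equiv0$, so that the frequency function $N(r)=rD(r)/H(r)$ is well defined and one may divide by $H(r)$ and integrate $\frac{d}{dr}\log N(r)$. Your proof supplies none of this; in particular it never uses that $\widetilde{U}$ solves the equation, whereas the paper's proof hinges precisely on the uniqueness theory for the degenerate Dirichlet problem. You cannot be faulted for following the literal wording ``whenever'', but to serve the role this lemma plays you must prove the converse implication via that uniqueness result, as the paper does.
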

\begin{proof}
If $H(r)=0$, it implies that $\widetilde{U}=0$ on $\partial B_{r}^{n+1}$.
Hence, by the uniqueness of the solution of the degenerate problem
(for example, see\cite{fabes1982local}), we conclude $\widetilde{U}\equiv0$
in $B_{r}^{n+1}$. \end{proof}
\begin{lem}
The function $H(r)$ is differentiable and 
\begin{eqnarray}
H'(r) & = & \left(\dfrac{(n+1-2s)}{r}+O(1)\right)H(r)+2D(r).\label{eq:H'(r)}
\end{eqnarray}
\end{lem}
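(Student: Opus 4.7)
The plan is to reduce to the flat-case Almgren calculation by rescaling to the unit sphere and treating the non-constancy of $\widetilde{A}$ as a perturbation. First, set $z=r\omega$ with $\omega\in\partial B_1^{n+1}$ and let $\omega_y$ denote the last component of $\omega$; since $|y|^{1-2s}\,dS(z)=r^{n+1-2s}|\omega_y|^{1-2s}\,dS(\omega)$, one may rewrite
\[
H(r)=r^{n+1-2s}\int_{\partial B_1^{n+1}}|\omega_y|^{1-2s}\mu(r\omega)|\widetilde{U}(r\omega)|^2\,dS(\omega).
\]
Differentiating in $r$ (using $\partial_r\widetilde{U}(r\omega)=\nabla\widetilde{U}(r\omega)\cdot\omega$) splits $H'(r)$ into three pieces: the prefactor contribution is exactly $\frac{n+1-2s}{r}H(r)$; the $\partial_r\mu$ contribution is bounded by $O(1)H(r)$ thanks to the estimate $|\partial_r\mu(rz)|\le C$ from \eqref{Estimates for mu and beta} together with the uniform bounds $\widetilde{\Lambda}^{-1}\le\mu\le\widetilde{\Lambda}$; and the remaining gradient piece, after changing variables back to $\partial B_r^{n+1}$, is
\[
I_3=\frac{2}{r}\int_{\partial B_r^{n+1}}|y|^{1-2s}\mu(z)\widetilde{U}(\nabla\widetilde{U}\cdot z)\,dS(z).
\]

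To identify $I_3$ with $2D(r)$ modulo absorbable error, use the defining relation $\widetilde{A}(z)z=\mu(z)\overrightarrow{\beta}(z)$ to decompose $\mu(z)z=\widetilde{A}(z)z+\mu(z)(z-\overrightarrow{\beta}(z))$. By the symmetry of $\widetilde{A}$, the identity $z=r\nu$ on $\partial B_r^{n+1}$, and the weighted divergence theorem combined with the equation $\nabla_{x,y}\cdot(|y|^{1-2s}\widetilde{A}\nabla_{x,y}\widetilde{U})=0$, the principal part yields
\[
\frac{2}{r}\int_{\partial B_r^{n+1}}|y|^{1-2s}\widetilde{U}\,(\widetilde{A}(z)z\cdot\nabla\widetilde{U})\,dS=2\int_{\partial B_r^{n+1}}|y|^{1-2s}\widetilde{U}(\widetilde{A}\nabla\widetilde{U}\cdot\nu)\,dS=2D(r).
\]

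The main obstacle will be controlling the residual $\frac{2}{r}\int_{\partial B_r^{n+1}}|y|^{1-2s}\mu(z)\widetilde{U}(z-\overrightarrow{\beta})\cdot\nabla\widetilde{U}\,dS$ by $O(1)H(r)$. From $\partial\beta_i/\partial z_j(z)=\delta_{ij}+O(|z|)$ together with $\overrightarrow{\beta}(0)=0$, one has $\overrightarrow{\beta}(z)=z+O(|z|^2)$, so pointwise on $\partial B_r^{n+1}$ the residual is $\lesssim r\int_{\partial B_r^{n+1}}|y|^{1-2s}|\widetilde{U}||\nabla\widetilde{U}|\,dS$. A crucial observation is that $z-\overrightarrow{\beta}(z)$ is tangential to $\partial B_r^{n+1}$: indeed, a direct calculation gives $\overrightarrow{\beta}\cdot\nu=(\widetilde{A}z\cdot z)/(\mu(z)|z|)=r$ and $z\cdot\nu=r$, so only tangential derivatives of $\widetilde{U}$ enter. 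A tangential integration by parts on the closed surface $\partial B_r^{n+1}$ (using $2\widetilde{U}\,(z-\overrightarrow{\beta})\cdot\nabla_T\widetilde{U}=(z-\overrightarrow{\beta})\cdot\nabla_T|\widetilde{U}|^2$) then transfers the derivative off $\widetilde{U}$ onto the weight $|y|^{1-2s}\mu(z)(z-\overrightarrow{\beta})$, whose tangential divergence is $O(r)$ by \eqref{Estimates for mu and beta}. This converts the residual into a pure boundary integral of $|\widetilde{U}|^2$ weighted by $O(1)\mu(z)|y|^{1-2s}$, which is manifestly $O(1)H(r)$. This tangential-integration-by-parts argument, pivoting on the algebraic identity $\overrightarrow{\beta}\cdot\nu\equiv r$, is the delicate step; without it the error would involve $\int_{\partial B_r^{n+1}}|y|^{1-2s}|\nabla\widetilde{U}|^2dS$, which is not directly comparable to $H(r)$ or $D(r)$.
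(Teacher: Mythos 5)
Your proposal is correct and follows essentially the same route as the paper: the same rescaling to the unit sphere, the same treatment of the $\partial_r\mu$ term, the same use of the divergence theorem together with the equation to identify the normal-derivative integral with $D(r)$, and the same tangential integration by parts to absorb the remainder into $O(1)H(r)$. Indeed your residual vector $\tfrac{1}{r}\mu(z)(z-\overrightarrow{\beta}(z))$ is exactly $-\mathcal{T}(z)=-(\widetilde{A}\nu-\mu(z)\nu)$, the tangential field the paper introduces, so the two arguments coincide step for step.
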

\begin{proof}
By change of variables, we have 
\begin{eqnarray*}
H(r) & = & \int_{\partial B_{r}^{n+1}}|y|^{1-2s}\mu(z)|\widetilde{U}(z)|^{2}dS\\
 & = & r^{n+1-2s}\int_{\partial B_{1}^{n+1}}|y|^{1-2s}\mu(rz)|\widetilde{U}(rz)|^{2}dS,
\end{eqnarray*}
then 
\begin{eqnarray*}
H'(r) & = & \dfrac{d}{dr}H(r)\\
 & = & (n+1-2s)r^{n-2s}\int_{\partial B_{1}^{n+1}}|y|^{1-2s}\mu(rz)|\widetilde{U}(rz)|^{2}dS\\
 &  & +r^{n+1-2s}\int_{\partial B_{1}^{n+1}}|y|^{1-2s}\dfrac{\partial}{\partial r}\mu(rz)|\widetilde{U}(rz)|^{2}dS\\
 &  & +2r^{n+1-2s}\int_{\partial B_{1}^{n+1}}|y|^{1-2s}\mu(rz)\widetilde{U}(rz)\dfrac{\partial}{\partial r}\widetilde{U}(rz)dS,
\end{eqnarray*}
Note that $H'(r)$ exists for a.e. $r>0$ due to $\widetilde{U}\in H^{1}(\mathbb{R}_{+}^{n+1},|y|^{1-2s})$
and $\dfrac{\partial}{\partial r}\mu(rz)$ is bounded by constant
$C>0$ (see (\ref{Estimates for mu and beta})) and after change of
variables back, we obtain 
\begin{eqnarray*}
H'(r) & \leq & \dfrac{(n+1-2s)}{r}\int_{\partial B_{r}^{n+1}}|y|^{1-2s}\mu(z)|\widetilde{U}(z)|^{2}dS\\
 &  & +C\int_{\partial B_{r}^{n+1}}|y|^{1-2s}|\widetilde{U}(z)|^{2}dS\\
 &  & +2\int_{\partial B_{r}^{n+1}}|y|^{1-2s}\mu(z)\widetilde{U}(z)\dfrac{\partial\widetilde{U}}{\partial\nu}(z)dS,
\end{eqnarray*}
where $\nu$ is a unit outer normal on $\partial B_{1}^{n+1}$. By
using the regularity assumption for $A(x)$ and $\widetilde{U}\in H^{1}(\mathbb{R}_{+}^{n+1},|y|^{1-2s})$,
we have $C\int_{|z|=r}|y|^{1-2s}|\widetilde{U}(z)|^{2}dS$ bounded
for a.e. $r>0$. Therefore, we have 
\begin{eqnarray*}
H'(r) & = & \left(\dfrac{(n+1-2s)}{r}+O(1)\right)H(r)\\
 &  & +2\int_{\partial B_{r}^{n+1}}|y|^{1-2s}\mu(z)\widetilde{U}(z)\dfrac{\partial\widetilde{U}}{\partial\nu}(z)dS.
\end{eqnarray*}

Finally, we will show that 
\begin{equation}
\int_{\partial B_{r}^{n+1}}|y|^{1-2s}\mu(z)\widetilde{U}(z)\dfrac{\partial\widetilde{U}}{\partial\nu}(z)dS=D(r)+O(1)H(r).\label{eq:another represetation for H'(r)}
\end{equation}
By using the equation $\nabla\cdot(|y|^{1-2s}\widetilde{A}\nabla\widetilde{U})=0$,
we can rewrite $D(r)$ in terms of 
\[
D(r)=\int_{B_{r}^{n+1}}\nabla\cdot(|y|^{1-2s}\widetilde{U}\widetilde{A}\nabla\widetilde{U})dz=\int_{\partial B_{r}^{n+1}}|y|^{1-2s}\widetilde{U}(\widetilde{A}\nu)\cdot\nabla\widetilde{U}dS.
\]
We define $\mathcal{T}(z):=\widetilde{A}\nu-\mu(z)\nu\in\mathbb{R}^{n+1}$
and note that 
\[
\mathcal{T}\cdot\nu=(\widetilde{A}\nu-\mu(z)\nu)\cdot\nu=0\mbox{ on }\partial B_{r}^{n+1},
\]
which means $\mathcal{T}(z)$ is a tangential vector of $\partial B_{r}^{n+1}$.
From the divergence theorem on $\partial B_{r}^{n+1}$, we can derive
that 
\begin{align*}
 & D(r)-\int_{\partial B_{r}^{n+1}}|y|^{1-2s}\mu(z)\widetilde{U}\dfrac{\partial\widetilde{U}}{\partial\nu}dS\\
= & \int_{\partial B_{r}^{n+1}}|y|^{1-2s}\widetilde{U}\nabla\widetilde{U}\cdot(\widetilde{A}\nu-\mu(z)\nu)dS\\
= & -\dfrac{1}{2}\int_{\partial B_{r}^{n+1}}|y|^{1-2s}|\widetilde{U}|^{2}\nabla\cdot\mathcal{T}dS-\dfrac{1}{2}\int_{\partial B_{r}^{n+1}}|\widetilde{U}|^{2}\left(\nabla|y|^{1-2s}\right)\cdot\mathcal{T}dS.
\end{align*}
From direct computation, we have $\left|\nabla_{x}\cdot\mathcal{T}\right|\leq C_{n,A}$
for some constant $C_{n,A}>0$ depending on $n$ and $A(x)$ and then
\begin{equation}
\int_{\partial B_{r}^{n+1}}|y|^{1-2s}|\widetilde{U}|^{2}\nabla_{x}\cdot\mathcal{T}dS=O(1)H(r).\label{Divergent 1}
\end{equation}
On the other hand, it is not hard to see that 
\[
\left|\int_{\partial B_{r}^{n+1}}|\widetilde{U}|^{2}\left(\nabla|y|^{1-2s}\right)\cdot\mathcal{T}dS\right|\leq\dfrac{1}{r}\int_{\partial B_{r}^{n+1}}|\widetilde{U}|^{2}\left|(1-2s)y|y|^{-2s}\left(1-\mu(z)\right)\right|dS
\]
and by using $\left|1-\mu(z)\right|\leq C_{A}|z|$, for some constant
$C_{A}>0$, then we can derive 
\[
\left|\int_{\partial B_{r}^{n+1}}|\widetilde{U}|^{2}\left(\nabla|y|^{1-2s}\right)\cdot\mathcal{T}dS\right|\leq C\int_{\partial B_{r}^{n+1}}|y|^{1-2s}|\widetilde{U}|^{2}dS=O(1)H(r).
\]
This proves the lemma.\end{proof}
\begin{lem}
The function $D(r)$ is differentiable with 
\begin{equation}
D'(r)=\left(\dfrac{n-2s}{r}+O(1)\right)D(r)+2\int_{\partial B_{r}^{n+1}}|y|^{1-2s}\dfrac{1}{\mu}\left|(\widetilde{A}\nu)\cdot\nabla\widetilde{U}\right|^{2}dS.\label{eq:D'(r)}
\end{equation}
\end{lem}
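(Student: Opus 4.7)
The plan is to derive \eqref{eq:D'(r)} from a Rellich--Pohozaev identity obtained by testing the equation $\nabla\cdot(|y|^{1-2s}\widetilde{A}\nabla\widetilde{U})=0$ against the multiplier $2\,\overrightarrow{\beta}\cdot\nabla\widetilde{U}$; the vector field $\overrightarrow{\beta}(z)=\widetilde{A}(z)z/\mu(z)$ plays the role of a radial-type field adapted to $\widetilde{A}$. The co-area formula applied to \eqref{eq:D(r)} already yields, for a.e.\ $r\in(0,1)$,
\[
D'(r)=\int_{\partial B^{n+1}_{r}}|y|^{1-2s}(\widetilde{A}\nabla\widetilde{U})\cdot\nabla\widetilde{U}\,dS,
\]
so the whole task is to recast this boundary quadratic form as the combination of the conormal integral $\int_{\partial B^{n+1}_{r}}|y|^{1-2s}\mu^{-1}|(\widetilde{A}\nu)\cdot\nabla\widetilde{U}|^{2}\,dS$ and the bulk energy $D(r)/r$ appearing on the right of \eqref{eq:D'(r)}.

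Multiplying the equation by $2\,\overrightarrow{\beta}\cdot\nabla\widetilde{U}$ and applying the divergence theorem on $B^{n+1}_{r}$ produces
\[
2\int_{B^{n+1}_{r}}|y|^{1-2s}\nabla(\overrightarrow{\beta}\cdot\nabla\widetilde{U})\cdot(\widetilde{A}\nabla\widetilde{U})\,dz=2\int_{\partial B^{n+1}_{r}}|y|^{1-2s}(\overrightarrow{\beta}\cdot\nabla\widetilde{U})(\widetilde{A}\nabla\widetilde{U})\cdot\nu\,dS.
\]
On $\partial B^{n+1}_{r}$ the relations $\nu=z/r$, $\overrightarrow{\beta}=(r/\mu)\widetilde{A}\nu$, and $\overrightarrow{\beta}\cdot\nu=r$ simplify the right-hand side to exactly $2r\int_{\partial B^{n+1}_{r}}|y|^{1-2s}\mu^{-1}|(\widetilde{A}\nu)\cdot\nabla\widetilde{U}|^{2}\,dS$, which, after the eventual division by $r$, furnishes the conormal summand in \eqref{eq:D'(r)}.

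The interior integral is unpacked by writing $\nabla(\overrightarrow{\beta}\cdot\nabla\widetilde{U})=(\nabla\overrightarrow{\beta})^{T}\nabla\widetilde{U}+D^{2}\widetilde{U}\,\overrightarrow{\beta}$. The first summand, combined with the expansion $\partial_{j}\beta_{i}=\delta_{ij}+O(|z|)$ from \eqref{Estimates for mu and beta}, contributes $2D(r)+O(r)D(r)$. For the Hessian summand I symmetrize to obtain $\overrightarrow{\beta}\cdot\nabla\big((\widetilde{A}\nabla\widetilde{U})\cdot\nabla\widetilde{U}\big)$ up to an $O(|z|)|\nabla\widetilde{U}|^{2}$ commutator term (controlled by smoothness of $\widetilde{A}$). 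A further integration by parts on $\overrightarrow{\beta}\cdot\nabla Q$, where $Q:=(\widetilde{A}\nabla\widetilde{U})\cdot\nabla\widetilde{U}$, generates the boundary term $rD'(r)$ via $\overrightarrow{\beta}\cdot\nu=r$, together with $-\int_{B^{n+1}_{r}}Q\,\nabla\cdot(|y|^{1-2s}\overrightarrow{\beta})\,dz$. The crucial weight computation is
\[
\nabla\cdot(|y|^{1-2s}\overrightarrow{\beta})=(n+2-2s)|y|^{1-2s}+O(|z|)|y|^{1-2s},
\]
in which the $n+1$ comes from $\nabla\cdot\overrightarrow{\beta}\approx n+1$ (see \eqref{Estimates for mu and beta}) and the extra $1-2s$ arises from $\nabla|y|^{1-2s}\cdot\overrightarrow{\beta}=(1-2s)|y|^{1-2s}/\mu$ using $\beta_{n+1}=y/\mu$.

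Equating the two sides of the Rellich identity and solving for $rD'(r)$ gives
\[
rD'(r)=(n-2s)D(r)+2r\int_{\partial B^{n+1}_{r}}|y|^{1-2s}\mu^{-1}\big|(\widetilde{A}\nu)\cdot\nabla\widetilde{U}\big|^{2}\,dS+O(r)D(r),
\]
and \eqref{eq:D'(r)} follows after dividing by $r$. The main technical obstacle is the bookkeeping of remainders: each step generates error terms of the form $\int|y|^{1-2s}O(|z|)|\nabla\widetilde{U}|^{2}\,dz$ arising from the variable coefficients of $\widetilde{A}$, from the commutator $\overrightarrow{\beta}\cdot\nabla\widetilde{A}$, and from the fact that $\overrightarrow{\beta}$ is only tangent to $\{y=0\}$ up to an $O(|z|)$ correction, and one must verify that each collapses to $O(r)D(r)$ on $B^{n+1}_{r}\subseteq B^{n+1}_{1}$ using only the expansions \eqref{Estimates for mu and beta}, the normalization $\widetilde{A}(0)=I_{n+1}$, and $|z|\leq r\leq 1$, so that after dividing by $r$ they coalesce into the single $O(1)D(r)$ term written in \eqref{eq:D'(r)}.
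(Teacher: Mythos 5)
Your proposal is correct and follows essentially the same route as the paper: both rest on the Rellich--Pohozaev identity generated by the multiplier $\overrightarrow{\beta}\cdot\nabla\widetilde{U}$ with $\overrightarrow{\beta}=\widetilde{A}z/\mu$, the boundary relations $\overrightarrow{\beta}\cdot\nu=r$ and $\overrightarrow{\beta}\cdot\nabla\widetilde{U}=r\mu^{-1}(\widetilde{A}\nu)\cdot\nabla\widetilde{U}$, the split of $\nabla\cdot(|y|^{1-2s}\overrightarrow{\beta})$ into the $(n+1)$ and $(1-2s)/\mu$ contributions, and the estimates \eqref{Estimates for mu and beta} to reduce every remainder to $O(r)D(r)$. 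The only difference is organizational (you test the equation against the multiplier and unpack, while the paper writes the same identity directly in divergence form as \eqref{Equality for y}), and your coefficient bookkeeping $2-(n+2-2s)=-(n-2s)$ agrees with the paper's $(n+1)+(1-2s)-2=n-2s$.
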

\begin{proof}
It is easy to see that 
\[
D'(r)=\int_{\partial B_{r}^{n+1}}|y|^{1-2s}\widetilde{A}(z)\nabla\widetilde{U}\cdot\nabla\widetilde{U}dS.
\]
By straightforward calculation, we have the following Rellich type
identity 
\begin{align}
 & \int_{B_{r}^{n+1}}\left[\nabla\cdot\left(|y|^{1-2s}\overrightarrow{\beta}(\widetilde{A}\nabla\widetilde{U}\cdot\nabla\widetilde{U})\right)-2\nabla\cdot\left(|y|^{1-2s}(\overrightarrow{\beta}\cdot\nabla\widetilde{U})\widetilde{A}\nabla\widetilde{U}\right)\right]dz\nonumber \\
= & \int_{B_{r}^{n+1}}\left[\nabla\cdot(|y|^{1-2s}\overrightarrow{\beta})(\widetilde{A}\nabla\widetilde{U}\cdot\nabla\widetilde{U})+\sum_{j,k,l=1}^{n+1}y^{1-2s}\beta_{l}\dfrac{\partial\widetilde{a}_{jk}}{\partial z_{l}}\dfrac{\partial\widetilde{U}}{\partial z_{j}}\dfrac{\partial\widetilde{U}}{\partial z_{k}}\right]dz\nonumber \\
 & -2\int_{B_{r}^{n+1}}\sum_{j,k,l=1}^{n+1}|y|^{1-2s}\widetilde{a}_{jk}\dfrac{\partial\beta_{l}}{\partial z_{k}}\dfrac{\partial\widetilde{U}}{\partial z_{j}}\dfrac{\partial\widetilde{U}}{\partial z_{k}}dz.\label{Equality for y}
\end{align}
Note that $\beta_{n+1}=\dfrac{y}{\mu(z)}$, so we have 
\begin{align}
 & \int_{B_{r}^{n+1}}\nabla\cdot(|y|^{1-2s}\overrightarrow{\beta})(\widetilde{A}\nabla\widetilde{U}\cdot\nabla\widetilde{U})dz\nonumber \\
= & \int_{B_{r}^{n+1}}(\nabla\cdot\overrightarrow{\beta})(\widetilde{A}\nabla\widetilde{U}\cdot\nabla\widetilde{U})dz+\int_{B_{r}^{n+1}}(1-2s)\dfrac{|y|^{1-2s}}{\mu(z)}\widetilde{A}\nabla\widetilde{U}\cdot\widetilde{U}dz.\label{eq:first term of the left hand side}
\end{align}

First, for the left hand sides in (\ref{Equality for y}), we use
the relations $\overrightarrow{\beta}\cdot\nu=r$, $\overrightarrow{\beta}\cdot\nabla\widetilde{U}=\dfrac{r(\widetilde{A}\nu)\cdot\nabla\widetilde{U}}{\mu(z)}$
on $\partial B_{r}^{n+1}$ and integrate them over $B_{r}^{n+1}$,
so we get 
\begin{align}
 & \int_{\partial B_{r}^{n+1}}|y|^{1-2s}(\widetilde{A}(x)\nabla\widetilde{U}\cdot\widetilde{U})(\overrightarrow{\beta}\cdot\nu)dS-2\int_{\partial B_{r}^{n+1}}\left(|y|^{1-2s}(\widetilde{A}\nu\cdot\nabla\widetilde{U}\right)(\beta\cdot\nabla\widetilde{U})dS\nonumber \\
= & r\int_{\partial B_{r}^{n+1}}|y|^{1-2s}(\widetilde{A}(x)\nabla\widetilde{U}\cdot\widetilde{U})dS-2r\int_{\partial B_{r}^{n+1}}|y|^{1-2s}\dfrac{\left|\widetilde{A}\nu\cdot\nabla\widetilde{U}\right|^{2}}{\mu(z)}dS\nonumber \\
= & rD'(r)-2r\int_{\partial B_{r}^{n+1}}|y|^{1-2s}\dfrac{\left|\widetilde{A}\nu\cdot\nabla\widetilde{U}\right|^{2}}{\mu(z)}dS.\label{eq:R1}
\end{align}

Second, we evaluate the right hand side of (\ref{Equality for y})
as follows. For the first term in the right hand side (RHS) of (\ref{Equality for y})
can be rewritten as \eqref{eq:first term of the left hand side} and
we estimate them separately. By using (\ref{Estimates for mu and beta}),
we have $\nabla\cdot\beta=n+1+O(r)$ for $z\in B_{1}^{n+1}$, which
implies 
\begin{equation}
\int_{B_{r}^{n+1}}(\nabla\cdot\beta)|y|^{1-2s}(\widetilde{A}(x)\nabla\widetilde{U}\cdot\nabla\widetilde{U})=(n+1+O(r))D(r),\label{eq:R2}
\end{equation}
and we know that $\beta_{n+1}=\dfrac{y}{\mu(z)}=y+\left(1-\dfrac{1}{\mu(z)}\right)y=y+O(|z|)y$,
with $|z|\leq r$, hence 
\begin{align}
 & \int_{B_{r}^{n+1}}\beta_{n+1}(1-2s)|y|^{-2s}(\widetilde{A}(x)\nabla\widetilde{U}\cdot\nabla\widetilde{U})dz\nonumber \\
= & \int_{B_{r}^{n+1}}(y+O(r)y)(1-2s)|y|^{-2s}(\widetilde{A}(x)\nabla\widetilde{U}\cdot\nabla\widetilde{U})dz\nonumber \\
= & (1-2s+O(r))D(r).\label{eq:R3}
\end{align}
For the second term in the RHS of (\ref{Equality for y}), we have
$\left|\beta_{l}\dfrac{\partial\widetilde{a}_{jk}}{\partial z_{l}}\right|\leq C|z|\leq Cr$
so that 
\begin{equation}
\sum_{j,k,l=1}^{n+1}\int_{B_{r}^{n+1}}|y|{}^{1-2s}\beta_{l}\dfrac{\partial\widetilde{a}_{jk}}{\partial z_{l}}\dfrac{\partial\widetilde{U}}{\partial z_{j}}\dfrac{\partial\widetilde{U}}{\partial z_{k}}=O(r)D(r),\label{eq:R4}
\end{equation}
For the last term in the RHS of (\ref{Equality for y}), Now, for
the last term in the RHS of (\ref{Equality for y}), from $\dfrac{\partial\beta_{l}}{\partial z_{k}}=\delta_{lk}+O(r)$
in a bounded region, it is easy to see that 
\begin{equation}
\int_{B_{r}^{n+1}}\sum_{j,k,l=1}^{n+1}|y|^{1-2s}\widetilde{a}_{jk}\dfrac{\partial\beta_{l}}{\partial z_{k}}\dfrac{\partial\widetilde{U}}{\partial z_{j}}\dfrac{\partial\widetilde{U}}{\partial z_{k}}dz=(1+O(r))D(r).\label{eq:R5}
\end{equation}
Finally, by plugging \eqref{eq:R1}, \eqref{eq:R2}, \eqref{eq:R3},
\eqref{eq:R4} and \eqref{eq:R5} into (\ref{Equality for y}), we
finish the proof of this lemma.
\end{proof}
Now, it is ready to prove the doubling inequality.
\begin{lem}
(Doubling inequality) Let $\widetilde{U}\in H^{1}(\mathbb{R}^{n+1},|y|^{1-2s})$
be a weak solution of $\nabla_{x,y}\cdot(|y|^{1-2s}\widetilde{A}(x)\nabla_{x,y}\widetilde{U})=0$
in $B_{1}^{n+1}$, then there exists a constant $C>0$ such that 
\begin{equation}
\int_{B_{2R}^{n+1}}|y|^{1-2s}\left|\widetilde{U}\right|^{2}dxdy\leq C\int_{B_{R}^{n+1}}|y|^{1-2s}\left|\widetilde{U}\right|^{2}dxdy,\label{Doubling inequality}
\end{equation}
whenever $B_{2R}^{n+1}\subset B_{1}^{n+1}$.\end{lem}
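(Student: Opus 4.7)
The approach is the standard Almgren frequency argument adapted to the degenerate equation, using the explicit formulas \eqref{eq:H'(r)} and \eqref{eq:D'(r)} already in hand. I would define the frequency
$$N(r):=\frac{rD(r)}{H(r)},$$
assuming $H(r)>0$ throughout $(0,1)$; if $H(r_0)=0$ for some $r_0$, uniqueness of the degenerate Dirichlet problem forces $\widetilde{U}\equiv 0$ in $B_{r_0}^{n+1}$ and the doubling bound is trivial. The plan is to prove the almost-monotonicity $\frac{d}{dr}\log N(r)\geq -C$, from which both a uniform upper bound $N(r)\leq N_0$ and the doubling inequality will follow.

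The first step is to compute
$$\frac{d}{dr}\log N(r)=\frac{1}{r}+\frac{D'(r)}{D(r)}-\frac{H'(r)}{H(r)},$$
and substitute \eqref{eq:H'(r)} and \eqref{eq:D'(r)}. The singular contributions $\frac{1}{r}+\frac{n-2s}{r}-\frac{n+1-2s}{r}$ cancel identically, leaving
$$\frac{d}{dr}\log N(r)=O(1)+2\left(\frac{W(r)}{D(r)}-\frac{D(r)}{H(r)}\right),$$
where $W(r):=\int_{\partial B_r^{n+1}}|y|^{1-2s}\mu^{-1}\bigl|(\widetilde{A}\nu)\cdot\nabla\widetilde{U}\bigr|^{2}\,dS$. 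Using the PDE together with the divergence theorem yields the identity $D(r)=\int_{\partial B_r^{n+1}}|y|^{1-2s}\widetilde{U}(\widetilde{A}\nu)\cdot\nabla\widetilde{U}\,dS$ already recorded in the proof of \eqref{eq:H'(r)}. The weighted Cauchy--Schwarz inequality, splitting $|y|^{1-2s}=(|y|^{1-2s}\mu)^{1/2}(|y|^{1-2s}\mu^{-1})^{1/2}$, then delivers $D(r)^{2}\leq H(r)W(r)$, i.e.\ $W(r)/D(r)\geq D(r)/H(r)$. This makes the bracket non-negative and proves $\frac{d}{dr}\log N(r)\geq -C$, so $e^{Cr}N(r)$ is non-decreasing.

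Since $\widetilde{U}\in H^{1}(B_{1}^{n+1},|y|^{1-2s})$, $N(1/2)$ is finite, so almost-monotonicity gives $N(r)\leq N_{0}:=e^{C/2}N(1/2)$ for all $r\in(0,1/2]$. Plugging the two-sided bound $0\leq N(r)\leq N_{0}$ back into \eqref{eq:H'(r)} produces
$$\frac{n+1-2s}{r}-C_{0}\;\leq\;\frac{H'(r)}{H(r)}\;\leq\;\frac{n+1-2s+2N_{0}}{r}+C_{0},$$
which upon integration over subintervals of $(0,1)$ yields sandwich estimates $cH(R)\leq H(r)\leq CH(R)$ uniformly in $r\in[R/2,2R]$, with constants depending only on $N_{0}$, $n$, $s$ and $\widetilde{A}$. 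Setting $\bar{\Phi}(r):=\int_{B_{r}^{n+1}}|y|^{1-2s}\mu(z)|\widetilde{U}|^{2}\,dz$, so that $\bar{\Phi}'(r)=H(r)$ by the coarea formula, one finds
$$\int_{R}^{2R}H(r)\,dr\;\leq\;CRH(R)\;\leq\;C'\int_{R/2}^{R}H(r)\,dr\;\leq\;C'\bar{\Phi}(R),$$
hence $\bar{\Phi}(2R)\leq(1+C')\bar{\Phi}(R)$. Since $\widetilde{\Lambda}^{-1}\leq\mu\leq\widetilde{\Lambda}$, $\bar{\Phi}$ is comparable to $\int_{B_{r}^{n+1}}|y|^{1-2s}|\widetilde{U}|^{2}\,dz$, which gives the claimed inequality.

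The technical heart of the argument, and the principal obstacle, is the Cauchy--Schwarz step $D(r)^{2}\leq H(r)W(r)$: it is what absorbs the sign-indefinite boundary contributions in $\frac{d}{dr}\log N(r)$ and thereby delivers the key near-monotonicity. Everything else is careful bookkeeping with the $O(1)$ remainders generated by smoothness of $\widetilde{A}$ and the normalisation $\widetilde{A}(0)=I_{n+1}$, which ensure $|1-\mu(z)|\leq C|z|$ and the various lower-order error terms in \eqref{eq:H'(r)}--\eqref{eq:D'(r)} remain bounded.
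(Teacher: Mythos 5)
Your proposal is correct and follows essentially the same route as the paper: the Almgren frequency function $N(r)=rD(r)/H(r)$, the derivative formulas for $H$ and $D$, the Cauchy--Schwarz step $D(r)^{2}\le H(r)W(r)$ giving $\frac{d}{dr}\log N(r)\ge -C$, a uniform bound on $N$, and then integration of $H'/H$ to pass from the surface doubling to the volume doubling. Your version is slightly more explicit about the degenerate case $H(r_{0})=0$, the comparability of $\mu$ to $1$, and the surface-to-volume step, but these are details the paper's argument also implicitly contains.
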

\begin{proof}
Since $H(r)$ and $D(r)$ are differentiable, so we can differentiate
$N(r)$ with respect to $r$, then we get 
\begin{equation}
N'(r)=N(r)\left\{ \dfrac{1}{r}+\dfrac{D'(r)}{D(r)}-\dfrac{H'(r)}{H(r)}\right\} .\label{eq:N'(r)}
\end{equation}
If we plug (\ref{eq:H(r)}), (\ref{eq:D(r)}), (\ref{eq:H'(r)}) and
(\ref{eq:D'(r)}) into (\ref{eq:N'(r)}) and use the Cauchy-Schwartz
inequality, then we can deduce that 
\begin{align*}
 & \dfrac{1}{r}+\dfrac{D'(r)}{D(r)}-\dfrac{H'(r)}{H(r)}\\
\geq & 2\left(\dfrac{\int_{\partial B_{r}^{n+1}}|y|^{1-2s}\dfrac{1}{\mu}\left|\widetilde{A}\nu\cdot\nabla\widetilde{U}\right|^{2}dS}{\int_{\partial B_{r}^{n+1}}|y|^{1-2s}\widetilde{U}\left(\widetilde{A}\nu\cdot\nabla\widetilde{U}\right)dS}-\dfrac{\int_{\partial B_{r}^{n+1}}|y|^{1-2s}\widetilde{U}\left(\widetilde{A}\nu\cdot\nabla\widetilde{U}\right)dS}{\int_{\partial B_{r}^{n+1}}|y|^{1-2s}\mu\left|\widetilde{U}\right|^{2}dS}\right)+O(1)\\
\geq & O(1)
\end{align*}
which implies 
\[
N'(r)\geq-CN(r)
\]
for some constant $C>0$. Moreover, for $R<1$, we integrate the above
inequality over $R$ to 1, then we have 
\[
\int_{R}^{1}\dfrac{d}{dr}\log N(r)dr\geq-C(1-R)\geq-C
\]
or 
\begin{equation}
N(R)\leq e^{-C}N(1).\label{eq:bound for N(r)}
\end{equation}
Note that (\ref{eq:H'(r)}) is equivalent to 
\[
\dfrac{d}{dr}\log\dfrac{H(r)}{r^{n+1-2s}}=2\dfrac{N(r)}{r}+O(1),
\]
where $O(1)$ is independent of $r$. After integrating over $(r,2r)$
and use \eqref{eq:bound for N(r)}, it is easy to see $H(2r)\leq CH(r)$
and integrate this quantity over $(0,R)$, which proves the doubling
inequality (\ref{Doubling inequality}). 
\end{proof}
\bibliographystyle{plain}
\bibliography{ref}

\end{document}